\title{The plectic conjecture over function fields}
\author{Siyan Daniel Li-Huerta}
\email{sli@math.harvard.edu}
\address{Department of Mathematics\\Harvard University\\1 Oxford Street\\Cambridge, MA 02138}
\keywords{plectic conjecture, moduli of shtukas, function fields}
\theoremstyle{plain}
\newtheorem*{thm*}{Theorem}
\newtheorem*{lem*}{Lemma}
\newtheorem*{prop*}{Proposition}
\newtheorem*{conj*}{Conjecture}
\newtheorem*{thmA}{Theorem A}
\newtheorem*{thmB}{Theorem B}
\newtheorem*{thmC}{Theorem C}
\theoremstyle{definition}
\newtheorem*{defn*}{Definition}
\theoremstyle{remark}
\newtheorem{rem}[subsection]{Remark}
\newtheorem*{rem*}{Remark}
\newtheorem*{rems*}{Remarks}
\begin{document}

\begin{abstract}
We prove the plectic conjecture of Nekov\'a\v{r}--Scholl \cite{NS16} over global function fields $Q$. For example, when the cocharacter is defined over $Q$ and the structure group is a Weil restriction from a geometric degree $d$ separable extension $F/Q$, consider the complex computing $\ell$-adic intersection cohomology with compact support of the associated moduli space of shtukas over $Q_I$. We endow this with the structure of a complex of $(\Weil(F)^d\rtimes\fS_d)^I$-modules, which extends its structure as a complex of $\Weil(Q)^I$-modules constructed by Arinkin--Gaitsgory--Kazhdan--Raskin--Rozenblyum--Varshavsky. We show that the action of $(\Weil(F)^d\rtimes\fS_d)^I$ commutes with the Hecke action, and we give a moduli-theoretic description of the action of Frobenius elements in $\Weil(F)^{d\times I}$.
\end{abstract}

\maketitle
\tableofcontents

\section*{Introduction}
The \emph{plectic conjecture} of Nekov\'a\v{r}--Scholl \cite{NS16} predicts extra symmetries in the cohomology of Shimura varieties when the structure group $G$ is a Weil restriction. In the $\ell$-adic realization, the case of trivial coefficients is formulated as follows. Suppose that $G$ is the Weil restriction $\R_{F/\bQ}H$ of a connected reductive group $H$ over a number field $F$. We have the \emph{plectic Galois group} $\Ga^{\plec}_{F/\bQ}\coloneqq\Aut_F(F\otimes_\bQ\ov\bQ)$, which naturally admits a continuous injective homomorphism from the absolute Galois group $\Ga_\bQ$ of $\bQ$. The plectic Galois group acts naturally on the set of conjugacy classes of cocharacters of $G_{\ov\bQ}$, and we can form the stabilizer $\Ga^{[\mu]}_{F/\bQ}$ of the Hodge cocharacter $[\mu]$ in $\Ga^{\plec}_{F/\bQ}$. Note that the reflex field $E$ is characterized by $\Ga_E = \Ga_\bQ\cap\Ga^{[\mu]}_{F/\bQ}$. For sufficiently large level $N$, write $\ov{\Sh}_N$ for the minimal compactification of our Shimura variety at level $N$ over $E$.
\begin{conj*}[{\cite[Conjecture 6.1]{NS16}}]
The intersection cohomology complex of $\ov{\Sh}_N$ with coefficients in $\ov\bQ_\ell$ canonically lifts from an object of $D^b(\Ga_E,\ov\bQ_\ell)$ to an object of $D^b(\Ga^{[\mu]}_{F/\bQ},\ov\bQ_\ell)$.
\end{conj*}
At the time of writing, the plectic conjecture in the number field setting is wide open. The goal of this paper is to prove the plectic conjecture in the function field setting. More precisely, we prove that an analogous phenomenon holds for the \emph{moduli space of shtukas}, which is an equi-characteristic analogue of Shimura varieties. However, moduli spaces of shtukas admit richer variants than their number field counterparts: namely, the ability to have multiple \emph{legs}. This already plays a crucial role in applications to the Langlands program \cite{Laf16, YZ17}, and it also plays a crucial role in this paper.

To state our results, we need some notation. Let $Q$ be a global field of positive characteristic, write $k=\bF_q$ for its constant field, and assume that $\ell\nmid q$. Henceforth let $F$ be a degree $d$ separable extension of $Q$ with the same constant field, let $H$ be a connected reductive group over $F$, and write $G$ for the Weil restriction $\R_{F/Q}H$. Let $I$ be a finite set, and let $\ul\om=(\om_i)_{i\in I}$ be an $I$-tuple of conjugacy classes of cocharacters of $G_{\ov{Q}}$ such that each $\om_i$ is defined over $Q$.\footnote{We can always enlarge $Q$ such that the $\om_i$ are defined over $Q$. This is analogous to the number field setting, since the field of definition of $[\mu]$ is precisely the field over which our Shimura variety lives.} Write $X$ for the geometrically connected smooth proper curve over $k$ associated with $Q$, and write $Q_I$ for the generic point of $X^I$. For any finite closed subscheme $N$ of $X$, we get a moduli space of shtukas $\Sht_{G,N,I,\ul\om}|_{Q_I}$ at level $N$ over $Q_I$.\footnote{Strictly speaking, we need to choose a parahoric group scheme over $X$ with generic fiber $G$. We also need to choose an ordered partition of $I$, in order to define partial Frobenius morphisms. However, we will ignore these issues for the rest of the introduction.} Work of Xue \cite[Proposition 6.0.10]{Xue20b} yields a natural $\Weil(Q)^I$-action on the intersection cohomology groups with compact support of $\Sht_{G,N,I,\ul\om}|_{Q_I}$, and forthcoming work of Arinkin--Gaitsgory--Kazhdan--Raskin--Rozenblyum--Varshavsky enhances this $\Weil(Q)^I$-action to the level of complexes.

We now turn to the plectic group in our setting. Since the $\om_i$ are defined over $Q$, they are stabilized by all of $\Ga^{\plec}_{F/Q}$. By fixing extensions of the $d$ different $Q$-embeddings $F\hookrightarrow\ov{Q}$ to automorphisms of $\ov{Q}$ over $Q$, we can identify $\Ga^{\plec}_{F/Q}$ with the semidirect product $\Ga_Q^d\rtimes\fS_d$, where $\fS_d$ denotes the $d$-th symmetric group. Applying similar observations to the Weil group yields a continuous injective homomorphism $\Weil(Q)\hookrightarrow\Weil(F)^d\rtimes\fS_d$.
\begin{thmA}
The complex of intersection cohomology with compact support of $\Sht_{G,N,I,\ul\om}|_{Q_I}$ with coefficients in $\ov\bQ_\ell$ canonically lifts from an object of $D^b(\Weil(Q)^I,\ov\bQ_\ell)$ to an object of $D^b((\Weil(F)^d\rtimes\fS_d)^I,\ov\bQ_\ell)$.
\end{thmA}
Because shtukas can have multiple legs, the power of $I$ appears in Theorem A. However, even when $I$ is a singleton (which mirrors the plectic conjecture in the number field setting), the ability to have multiple legs plays a crucial role in the proof of the plectic conjecture in the function field setting.
\begin{rem*}
For $\ul\om$ not necessarily defined over $Q$, our methods prove a similar result for the intersection cohomology with compact support of a union of the plectic Galois translates of $\Sht_{G,N,I,\ul\om}$. In fact, all our results apply in this level of generality. See Theorem \ref{ss:maintheorem}, Theorem \ref{ss:plecticalgebra}, and Theorem \ref{ss:plecticfrob}.
\end{rem*}
The $(\Weil(F)^d\rtimes\fS_d)^I$-action we construct enjoys the following compatibility. Write $\fH_{G,N}$ for the Hecke algebra of $G$ at level $N$, which acts naturally on $\Sht_{G,N,I,\ul\om}|_{Q_I}$ via finite \'etale correspondences and hence on its intersection cohomology groups with compact support.
\begin{thmB}
The action of $(\Weil(F)^d\rtimes\fS_d)^I$ from Theorem A on the level of cohomology groups commutes with the action of $\fH_{G,N}$.
\end{thmB}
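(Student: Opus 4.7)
My plan is to reduce the commutation to a set of generators of $(\fS_d\ltimes\Weil(F)^d)^I$. Since the $\Weil(Q)^I$-action of Xue is already known to commute with $\fH_{G,N}$, the new content of Theorem~B concerns the additional generators introduced by Theorem~A: the refined plectic Frobenius elements in $\Weil(F)^{d\times I}$ lying outside the image of $\Weil(Q)^I$, and the symmetric group generators in $\fS_d^I$ permuting the $d$ plectic factors at each leg.

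Both the Hecke algebra and Xue's $\Weil(Q)^I$-action have a purely geometric origin: the former via finite \'etale correspondences on $\Sht_{G,N,I,\ul\om}|_{Q_I}/\Xi$ as a $Q_I$-stack, the latter via the Galois action on the base $Q_I$ combined with the partial Frobenius morphisms. The strategy is to show that the additional plectic generators likewise arise from geometric operations on (a suitable cover of) the shtuka moduli stack, and that these operations commute with Hecke correspondences at the geometric level. For the refined plectic Frobenii, I expect the construction in Theorem~A to produce them as refinements of the standard partial Frobenii: each leg's partial Frobenius cycle factors through a sequence of intermediate steps governed by the $d$ embeddings $F\hookrightarrow\ov{Q}$, and $\Weil(F)^d$ acts leg-by-leg compatibly with this factorization. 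Commutation with Hecke then follows from the standard argument that Hecke correspondences commute with partial Frobenii at the level of stacks, applied to each refined factor in turn.

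The main obstacle concerns the $\fS_d^I$-action, which is not directly encoded by the $Q_I$-structure of the moduli space and so cannot be handled by the geometric argument above. To handle it, I would identify $\Sht_{G,N,I,\ul\om}|_{Q_I}$ with (or relate it to) a plectic variant of the shtuka moduli space for $H$ over the curve associated with $F$, under which $\fH_{G,N}$ is realized in a manifestly $\fS_d^I$-equivariant way, built symmetrically from the $d$ plectic factors. Under such an identification, the $\fS_d^I$-action on cohomology corresponds to a relabeling of the plectic factors that preserves $\fH_{G,N}$ setwise, and commutation would follow. The hard part of the argument is formulating this geometric picture with enough precision to transfer the $\fS_d^I$-action between the two sides in a manner manifestly compatible with the finite \'etale correspondences defining Hecke operators for $G$.
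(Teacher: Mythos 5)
Your proposal is correct and follows essentially the same route as the paper's proof: transfer both actions to the $d\times I$-legged $H$-shtuka moduli space via the plectic diagram and the symmetrization map $\al$, identify $\fH_{G,N}$ with $\fH_{H,M}$ (which is manifestly $\fS_d^I$-symmetric), and then conclude from the known commutation of Hecke correspondences with partial Frobenii on the $H$-side together with the $\fS_d^I$-equivariance of the adelic action. The one verification you flag as ``the hard part''---that the finite \'etale Hecke correspondences on $G$-shtukas are pullbacks of those on (symmetrized, then unsymmetrized) $H$-shtukas along the plectic and symmetrization morphisms---is indeed exactly the content the paper supplies, building on its Remark~\ref{rem:extension} and \ref{ss:adeliccompatible}.
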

We can also describe the action of Frobenius elements in $(\Weil(F)^d\rtimes\fS_d)^I$ in terms of \emph{partial Frobenius morphisms}, as conjectured in \cite[Remark 6.7]{NS16}. Now $F/Q$ corresponds to a finite morphism $m:Y\ra X$, where $Y$ is also geometrically connected over $k$. Let $k'$ be a degree $r$ extension of $k$, and let $\ul{x}=(x_i)_{i\in I}$ be a $k'$-point of $X^I$ such that each $x_i$ \emph{splits completely} in $Y$, i.e. $m^{-1}(x_i)$ is a disjoint union of $k'$-points $(y_{h,i})_{h=1}^d$. For $x_i$ lying in a certain dense open subscheme $U\ssm N$ of $X$, a smoothness result of Xue \cite[Theorem 6.0.12]{Xue20b} identifies the intersection cohomology groups with compact support of $\Sht_{G,I,\ul\om}|_{Q_I}$ and the intersection cohomology groups with compact support of $\Sht_{G,I,\ul\om}|_{\ul{x}}$. Write $\ul{y}$ for the $k'$-point $(y_{h,i})_{(h,i)\in d\times I}$ of $Y^{d\times I}$. Diagrams (\ref{eq:pl}) and (\ref{eq:sym}) below will enable us to identify $\Sht_{G,I,\ul\om}|_{\ul{x}}$ and $\Sht_{H,d\times I,\ul\om}|_{\ul{y}}$ up to universal homeomorphism.

We now introduce partial Frobenii. Write $V$ and $M$ for the preimages of $U$ and $N$ in $Y$. For any $(h,i)$ in $d\times I$, we have a commutative square
\begin{align*}
\xymatrix{\Sht_{H,d\times I,\ul\om}|_{(V\ssm M)^{d\times I}}\ar[r]^-{\Fr_{(h,i)}}\ar[d]^-\fp & \Sht_{H,d\times I,\ul\om}|_{(V\ssm M)^{d\times I}}\ar[d]^-\fp\\
(V\ssm M)^{d\times I}\ar[r]^-{\Frob_{(h,i)}} & (V\ssm M)^{d\times I},
}
\end{align*}
where $\Frob_{(h,i)}$ equals absolute $q$-Frobenius on the $(h,i)$-th factor and the identity on the other factors. Therefore $\Fr_{(h,i)}$ induces a $\Frob_{(h,i)}$-semilinear endomorphism $F_{(h,i)}$ of the relative intersection cohomology with compact support of $\Sht_{H,d\times I,\ul\om}|_{(V\ssm M)^{d\times I}}$ over $(V\ssm M)^{d\times I}$. As $\Frob_{(h,i)}^r$ fixes $\ul{y}$, we obtain an action of $F_{(h,i)}^r$ on the intersection cohomology groups with compact support of $\Sht_{H,d\times I,\ul\om}|_{\ul{y}}$.

On the other hand, we also have Frobenius elements in Weil groups. Namely, the $k'$-point $y_{h,i}$ of $Y$ yields a geometric $q^r$-Frobenius element $\ga_{y_{h,i}}$ in $\Weil(F)$, which acts on the intersection cohomology groups with compact support of $\Sht_{G,I,\ul\om}|_{Q_I}$ via the $(h,i)$-th factor of $\Weil(F)^{d\times I}$ in Theorem A.
\begin{thmC}
Under these identifications, the action of $\ga_{y_{h,i}}$ equals the action of $F^r_{(h,i)}$.
\end{thmC}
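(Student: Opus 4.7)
The plan is to reduce Theorem C to the direct $H$-shtuka analog of Xue's \cite[Theorem 6.0.12]{Xue20b} compatibility between the $\Weil(F)^{d\times I}$-action and partial Frobenii, by transporting the statement along the identifications in diagrams (\ref{eq:pl}) and (\ref{eq:sym}).

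First I would unwind the construction of Theorem A to see that the $\Weil(F)^{d\times I}$-subgroup of $(\fS_d\ltimes\Weil(F)^d)^I$ acts on the intersection cohomology of $\Sht_{G,N,I,\ul\om}|_{Q_I}/\Xi$ by transport along the identification provided by diagrams (\ref{eq:pl}) and (\ref{eq:sym}), from Xue's $\Weil(F)^{d\times I}$-action on the intersection cohomology of $\Sht_{H,M,d\times I,d\times\ul\om}$ over the generic point of $Y^{d\times I}$. Granting this, the whole problem transfers to the $H$-side, where Xue's smoothness identifies the generic intersection cohomology with the fiber at any $\ul{y}\in (V\ssm M)^{d\times I}$, and Xue's theorem matches the action of $\ga_{y_{h,i}}\in\Weil(F)$ on the former with that of $F^r_{(h,i)}$ on the latter.

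It then remains to check that the identifications on the $G$-side used to state Theorem C are compatible with those on the $H$-side. Concretely, one must verify that the universal homeomorphism $\Sht_{G,I,\ul\om}|_{\ul{x}}\simeq\Sht_{H,d\times I,d\times\ul\om}|_{\ul{y}}$ from diagrams (\ref{eq:pl}) and (\ref{eq:sym}) intertwines Xue's two smoothness isomorphisms, and that the partial Frobenius $F_{(h,i)}$ on the $H$-side, transported through these diagrams, agrees with the corresponding operation in the $\Weil(F)^{d\times I}$-action of Theorem A on the $G$-side. The main obstacle is precisely this compatibility check: one must track the parahoric level structures, the ordered partition of $I$ used to define partial Frobenii, and the way that absolute $q$-Frobenius on $X^I$ factors through absolute $q$-Frobenius on $Y^{d\times I}$ at completely split points. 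Since diagrams (\ref{eq:pl}) and (\ref{eq:sym}) are designed to be natural with respect to base change and since partial Frobenii are defined via absolute Frobenius on the relevant factors, this should reduce to moduli-theoretic bookkeeping, from which Theorem C follows by combining these compatibilities with Xue's theorem on the $H$-side.
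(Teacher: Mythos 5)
Your overall route matches the paper's: you correctly identify that Theorem A's $\Weil(F)^{d\times I}$-action on the $G$-side is constructed by transport along Diagrams (\ref{eq:pl}) and (\ref{eq:sym}) from the $H$-side action, so that Theorem C reduces to the analogous statement at $\ul{y}$ on the $H$-side, together with compatibility of the specialization maps, \'etale paths, and partial Frobenii across these diagrams (this is exactly the bookkeeping the paper carries out in \ref{ss:totallysplitfiber}--\ref{ss:plecticfrob}).

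There is, however, a real gap where you invoke ``Xue's theorem'' to match $\ga_{y_{h,i}}$ with the $r$-fold iterate $F_{\{(h,i)\}}\circ\Frob_{\{(h,i)\}}^*(F_{\{(h,i)\}})\circ\dotsb\circ\Frob^{r-1,*}_{\{(h,i)\}}(F_{\{(h,i)\}})$ on the $H$-side. Xue's Theorem 6.0.12 gives ind-smoothness (hence the specialization isomorphism), and Xue's Theorem 6.0.13 is Drinfeld's lemma (the $\FWeil_{d\times I}$-action factors through $\Weil(V\ssm M)^{d\times I}$); neither of these directly identifies the Frobenius element $\ga_{y_{h,i}}$ with the iterated partial Frobenius. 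For a \emph{smooth} $E$-sheaf with partial Frobenii this identification is essentially formal, reducing to exterior tensor products of Weil sheaves --- that is Proposition \ref{ss:galfrobprop}. But $\sH^p_{M,d\times I,W_{\Om,H},\Xi}$ is only \emph{ind}-smooth, and one cannot simply pass to finite-dimensional subrepresentations compatibly with both the partial Frobenii and the specialization maps. The paper's Proposition \ref{ss:galfrob} closes exactly this gap: it uses \cite[Lemma 6.0.9]{Xue20b} to write $\sH^p$ as a union of ind-smooth subsheaves $\sM$ finitely generated over noetherian Hecke algebras $\bigotimes_{i}\fH_{G,u_i}$ and preserved by the $F_{\{i\}}$, applies \ref{ss:galfrobprop} to the smooth quotients $(\sM|_{\ov{\eta_I}})/\fm^n$, and then recovers the statement for $\sM$ via the injection into $\bigoplus_\fm(\sM|_{\ov{\eta_I}})^\wedge_\fm$. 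This step is the technical content of Theorem C and is not a formal consequence of Xue's theorems; as written, your plan omits it.
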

Let us now discuss the proofs of our theorems. For simplicity, assume that $H$ is split, take $N=\varnothing$, and suppose that $F$ is everywhere unramified over $Q$.\footnote{We treat the general case in the body of the paper, and this simplified case already illustrates the main ideas.} Thus $m:Y\ra X$ is \'etale.

We begin by observing that $G$-bundles on $X$ are naturally equivalent to $H$-bundles on $Y$. Moreover, this equivalence is compatible with replacing $X$ by the punctured curve $X\ssm x$, as long as $Y$ is replaced by $Y\ssm m^{-1}(x)$. We use this to show the existence of a Cartesian square
\begin{align}\label{eq:pl}
\xymatrix{\Sht_{G,I,\ul\om}\ar[r]\ar[d]^-\fp & \Sht^{(d)}_{H,I,\ul\om}\ar[d]^-\fp\\
X^I\ar[r]^-{(m^{-1})^I} & (\Div^d_Y)^I,
}\tag{$\triangleright$}
\end{align}
where $\Div^d_Y$ denotes the space of degree $d$ divisors of $Y$, and $\Sht^{(d)}_{H,I,\ul\om}$ denotes a \emph{symmetrized} variant of the moduli space of shtukas that keeps track of an $I$-tuple of \emph{divisors} of $Y$, instead of just points of $Y$. We make important use of this symmetrized variant, so we study it thoroughly in \S\ref{s:shtukas}. Diagram (\ref{eq:pl}) provides one incarnation of the conjectured \emph{plectic diagram} from \cite[(1.3)]{NS16}. Because $m$ is \'etale, the image of the closed immersion $m^{-1}:X\ra \Div^d_Y$ lies in the open subscheme $\Div^{d,\circ}_Y$ of \'etale divisors. 

We can relate $\Sht^{(d)}_{H,I,\ul\om}$ to a usual, unsymmetrized moduli space of shtukas as follows. By viewing $\Div^d_Y$ as the scheme-theoretic quotient of $Y^d$ by $\fS_d$, we get a commutative square
\begin{align}\label{eq:sym}
\xymatrix{\Sht^{(d)}_{H,I,\ul\om}\ar[d]^-\fp &\ar[l] \Sht_{H,d\times I,\ul\om}\ar[d]^-\fp\\
(\Div_Y^d)^I&\ar[l]_-\al Y^{d\times I}
}\tag{$\triangleleft$}
\end{align}
that is Cartesian up to universal homeomorphism, where we use $G=\R_{F/Q}H$ to view each $\om_i$ as a $d$-tuple of conjugacy classes of cocharacters of $H_{\ov{F}}$. Note that $\fS_d^I$ acts naturally on the right-hand side. Now Arinkin--Gaitsgory--Kazhdan--Raskin--Rozenblyum--Varshavsky's result endows the complex of intersection cohomology with compact support of $\Sht_{H,d\times I,\ul\om}|_{F_{d\times I}}$ with the structure of a complex of $\Weil(F)^{d\times I}$-modules, and the $\fS_d^I$-action intertwines the $\Weil(F)^{d\times I}$-action by permutation. We use this to obtain the structure of a complex of $\Weil(F)^{d\times I}\rtimes\fS_d^I = (\Weil(F)^d\rtimes\fS_d)^I$-modules.

By applying proper base change to Diagrams (\ref{eq:pl}) and (\ref{eq:sym}), Xue's smoothness result \cite[Theorem 4.2.3]{Xue20b} identifies the complexes of intersection cohomology with compact support of $\Sht_{G,I,\ul\om}|_{Q_I}$ and intersection cohomology with compact support of $\Sht_{H,d\times I,\ul\om}|_{F_{d\times I}}$. Under this identification, we check that the action of $\Weil(Q)^I$ agrees with the action of its image in $(\Weil(F)^d\rtimes\fS_d)^I$, which completes the proof of Theorem A. From here, we deduce Theorem B by generalizing Hecke correspondences to $\Sht^{(d)}_{H,I,d\times\ul\om}$ and showing that they are compatible with Diagrams (\ref{eq:pl}) and (\ref{eq:sym}). Finally, we obtain Theorem C using the fact that the $\Weil(F)^{d\times I}$-action on the intersection cohomology groups with compact support of $\Sht_{H,d\times I,\ul\om}|_{F_{d\times I}}$ is constructed by applying \emph{Drinfeld's lemma} to the $F_{(h,i)}$. Now Drinfeld's lemma does not immediately apply, as these cohomology groups are not finite-dimensional over $\ov\bQ_\ell$, but we use results of Xue to circumvent this. Note that even when $I$ is a singleton, $d\times I$ usually is not, so Drinfeld's lemma and therefore multiple-leg phenomena play a crucial role in this paper.

\begin{rems*}\hfill
  \begin{enumerate}[(1)]
  \item We appeal to forthcoming work of Arinkin--Gaitsgory--Kazhdan--Raskin--Rozenblyum--Varshavsky for two reasons: to obtain results for general $U$ and $N$, and to obtain results on the level of complexes. If we only want Theorem A when $U\ssm N=X$, then we only need existing results from \cite{AGKRRV21}. If we only want Theorem A on the level of cohomology groups, then we only need results of Xue \cite[Proposition 6.0.10]{Xue20b} instead.

\item We require that $F$ has the same constant field as $Q$ in order for \cite{Xue20b} and \cite{AGKRRV21} to apply to the moduli of shtukas over $Y$. Without this hypothesis, $Y^{d\times I}$ may be disconnected, so its local systems are no longer dictated by representations of a single group. However, we expect some version of \cite{Xue20b} and \cite{AGKRRV21} to apply even without this hypothesis. Consequently, this would remove this hypothesis from Theorem A, Theorem B, and Theorem C.

\item Our strategy also applies to moduli spaces of local shtukas as in \cite{FS21}. In particular, we expect a proof of the plectic conjecture for local Shimura varieties on the level of complexes, which should yield applications to (global) Shimura varieties via uniformization. We hope to report on this soon.
  \end{enumerate}
\end{rems*}

Tamiozzo considered a variant of Diagram (\ref{eq:pl}) in his thesis, though he did not proceed further. After completing an earlier version of this paper, the author was informed that X. Zhu proposed a similar strategy for proving Theorem A, but only on the level of cohomology groups.

\subsection*{Outline}
In \S\ref{s:bung}, we collect facts on the moduli space of $G$-bundles, as well as certain relative variants thereof. In \S\ref{s:grass}, we introduce symmetrized versions of the Hecke stack and the Beilinson--Drinfeld Grassmannian, and we also recall the Beauville-Laszlo theorem and the geometric Satake correspondence. In \S\ref{s:shtukas}, we use the preceding material to define and study symmetrized versions of the moduli space of shtukas, which are the main characters of this paper. We also recall Xue's smoothness result here. In \S\ref{s:frob}, we discuss partial Frobenius morphisms, their relation to monodromy, and how they arise in the moduli space of shtukas. We also state the anticipated result of Arinkin--Gaitsgory--Kazhdan--Raskin--Rozenblyum--Varshavsky here. Finally, in \S\ref{s:plectic} we assemble everything and prove Theorem A, Theorem B, and Theorem C. We conclude by elaborating on a moduli-theoretic interpretation of Theorem C.

\subsection*{Notation} Unless otherwise specified, all fiber products and thus Cartesian powers are taken over $k$. We denote base changes with subscripts, possibly also with vertical restriction bars. For any connected algebraic stack $\cX$ over $k$, we always suppress base points and write $\pi_1(\cX)$ for the associated \'etale fundamental group. By a \emph{$G$-bundle}, we always mean a principal homogeneous space for $G$.

We view all derived categories as $\infty$-categories, and we interpret all operations on them $\infty$-categorically. For any locally profinite group $W$, write $D^b_c(W,\ov\bQ_\ell)$ for the bounded derived category of continuous finite-dimensional representations of $W$ over $\ov\bQ_\ell$, write $D(W,\ov\bQ_\ell)$ for its ind-completion, and write $D^b(W,\ov\bQ_\ell)\subseteq D(W,\ov\bQ_\ell)$ for the full subcategory of bounded objects. Finally, for any $\infty$-category $\cC$ with an action by a discrete group $H$, we write $\cC^{BH}$ for the $\infty$-category of $H$-equivariant objects in $\cC$.

\subsection*{Acknowledgments}
The author thanks Mark Kisin for his encouragement and patience. The author is especially indebted to Dennis Gaitsgory for explaining his work-in-progress, as well as Kevin Lin for answering countless questions about \cite{AGKRRV21}. The author would also like to thank Robert Cass and Tamir Hemo for helpful discussions.

\section{Moduli spaces of bundles}\label{s:bung}
In this section, we collect facts on the \emph{moduli space of $G$-bundles on $X$}, as it plays a central role in our discussion. We begin by fixing notation for our group schemes $G$ of interest over $X$, which serve as integral models for our structure group over $Q$. Then, we define the moduli space of $G$-bundles on $X$ with level structure, as well as certain relative variants which will be useful in \S\ref{s:grass}. We conclude by introducing Weil restrictions and how they affect $\Bun_G$, which is crucial for the results of this paper.

\subsection{}\label{ss:parahoricgroup}
We use \emph{parahoric} group schemes over $X$, since their corresponding Hecke stacks and Beilinson--Drinfeld affine Grassmannians in \S\ref{s:grass} enjoy nice properness properties. Let us recall their definition. Let $k$ be a finite field of cardinality $q$, and let $X$ be a connected smooth proper curve over $k$. Write $Q$ for the function field of $X$, fix an algebraic closure $\ov{Q}$ of $Q$, and write $\Ga_Q\coloneqq\Gal(\ov{Q}/Q)$ for the absolute Galois group of $Q$ with respect to $\ov{Q}$. For any closed point $x$ of $X$, write $\cO_x$ for the completion of the local ring $\cO_{X,x}$, and write $Q_x$ for its fraction field.
\begin{defn*}
We call a smooth affine group scheme $G$ over $X$ \emph{parahoric} if it has geometrically connected fibers, its generic fiber $G_Q$ is reductive, and for every closed point $x$ of $X$, the group scheme $G_{\cO_x}$ over $\cO_x$ is parahoric in the sense of \cite[5.2.6]{BT84}.
\end{defn*}
Let $G$ be a parahoric group scheme over $X$. Then there exists a nonempty open subscheme $U$ of $X$ such that $G_U$ is reductive over $U$ \cite[Exp. XIX 2.6]{DG70}. Let $\wt{Q}$ be a finite Galois extension of $Q$ such that the $*$-action on a based root datum of $G_Q$ factors through $\Gal(\wt{Q}/Q)$, and let $\wh{Q}$ be a finite separable extension of $\wt{Q}$ such that $G_{\wh{Q}}$ is split. Write $f:\wh{X}\ra X$ for the finite generically \'etale morphism corresponding to $\wh{Q}/Q$, where $\wh{X}$ is a connected smooth proper curve over $k$. Write $\wh{U}$ for the inverse image $f^{-1}(U)$. After shrinking $U$, we may assume that $G_{\wh{U}}$ is split and $f|_{\wh{U}}$ is \'etale.

Let $T$ be a maximal subtorus of $G_Q$, and let $B$ be a Borel subgroup of $G_{\wh{Q}}$ containing $T_{\wh{Q}}$. After shrinking $U$, we may assume that $T$ extends to a split subtorus of $G_{\wh{U}}$ over $\wh{U}$ and $B$ extends to a Borel subgroup of $G_{\wh{U}}$ over $\wh{U}$. Let $\ell$ be a prime not dividing $q$, and write $(\wh{G},\wh{T},\wh{B})$ for the based dual group over $\ov\bQ_\ell$ associated with the based root datum of $(G_{\wh{Q}},T_{\wh{Q}},B)$. Write $\prescript{L}{}{G}$ for the semidirect product $\wh{G}(\ov\bQ_\ell)\rtimes\Gal(\wt{Q}/Q)$. 

\begin{rem}
Any connected reductive group $G_Q$ over $Q$ arises as the generic fiber of a parahoric group scheme as follows. By spreading out $G_Q$ to a smooth affine group scheme over some nonempty open subscheme $U$ of $X$, applying \cite[Exp. XIX 2.6]{DG70} and \cite[Proposition 3.1.12]{Con14}, and shrinking $U$ if necessary, we obtain a reductive group scheme $G_U$ over $U$ with geometrically connected fibers whose generic fiber is isomorphic to $G_Q$. For the finitely many $x$ in $X\ssm U$, there exists a parahoric group scheme $G_{\cO_x}$ over $\cO_x$ whose generic fiber is isomorphic to $G_{Q_x}$ \cite[5.1.9]{BT84}. Gluing the $G_{\cO_x}$ with $G_U$ via fpqc descent yields a parahoric group scheme over $X$ whose generic fiber is isomorphic to $G_Q$.
\end{rem}

\subsection{}\label{ss:bunG}
We now introduce a general, relative variant of the \emph{moduli space of $G$-bundles on $X$} with level structure. Let $T$ be a scheme over $k$, and let $D$ be a $T$-relative effective Cartier divisor of $X\times T$.
\begin{defn*}
Write $\Bun_{G,D}$ for the prestack over $T$ whose $S$-points parametrize data consisting of
\begin{enumerate}[i)]
\item a $G\times S$-bundle $\cG$ on $X\times S$,
\item an isomorphism $\psi:\cG|_D\ra^\sim (G\times S)|_D$ of $(G\times S)|_D$-bundles.
\end{enumerate}
When $T=k$ and $D=\varnothing$, we shorten this to $\Bun_G$. For $T$-relative effective Cartier divisors $D_1$ and $D_2$ of $X\times T$ such that $D_1\subseteq D_2$, pulling back $\psi$ yields a morphism $\Bun_{G,D_2}\ra\Bun_{G,D_1}$.
\end{defn*}
Now $\Bun_G$ is a smooth algebraic stack over $k$ \cite[Proposition 1]{Hei10}, and note that $\Bun_{G,\varnothing}=\Bun_G\times T$. In general, the Weil restriction $\R_{D/T}((G\times T)|_D)$ has a left action on $\Bun_{G,D}$ via composition with $\psi$, and we see that this exhibits the morphism $\Bun_{G,D}\ra\Bun_G\times T$ as an $\R_{D/T}((G\times T)|_D)$-bundle. Since $\R_{D/T}((G\times T)|_D)$ is a smooth affine group scheme over $T$, we see that $\Bun_{G,D}$ is a smooth algebraic stack over $T$.

\subsection{}\label{ss:div}
In this subsection, we relax our properness assumption on $X$ to separatedness. Let us establish notation on the space of divisors of $X$. Let $d$ be a non-negative integer, and write $\Div_X^d$ for the presheaf over $k$ whose $S$-points parametrize $S$-relative effective Cartier divisors of $X\times S$ with degree $d$. Also, write $X^{(d)}$ for the scheme-theoretic quotient of $X^d$ by the permutation action of the symmetric group $\fS_d$. Since $X$ is a smooth curve over $k$, the morphism $\al:X^d\ra\Div^d_X$ that sends $(x_h)_{h=1}^d\mapsto\sum_{h=1}^d\Ga_{x_h}$ induces an isomorphism $X^{(d)}\ra^\sim\Div_X^d$, where $\Ga_{x_h}$ denotes the graph of $x_h$ \cite[Exp. XVII 6.3.9]{AGV73}.

Write $\Div_X^{d,\circ}$ for the subpresheaf of $\Div_X^d$ whose $S$-points parametrize $S$-relative effective Cartier divisors of $X\times S$ that are \'etale over $S$. We see that the preimage $\al^{-1}(\Div_X^{d,\circ})$ consists of the complement of all diagonals in $X$, so $\Div_X^{d,\circ}$ is an open subscheme of $\Div_X^d$.

\subsection{}\label{ss:divgroups}
In \S\ref{s:grass}, we will apply the relative variant of Definition \ref{ss:bunG} to the following setup. Let $I$ be a finite set. The summation morphism $(\Div_X^d)^I\ra\Div_X^{d\#I}$ corresponds to a $(\Div_X^d)^I$-relative effective Cartier divisor of $X\times(\Div_X^d)^I$ with degree $d\#I$, which we denote by $\Ga_{\sum_{i\in I}D_i}$. For any non-negative integer $n$, write $\Ga_{\sum_{i\in I}nD_i}$ for the $(\Div_X^d)^I$-relative effective Cartier divisor $n\Ga_{\sum_{i\in I}D_i}$ of $X\times(\Div_X^d)^I$, and write $G_{\Ga_{\sum_{i\in I}nD_i}}$ for the Weil restriction
\begin{align*}
\R_{\Ga_{\sum_{i\in I}nD_i}/(\Div_X^d)^I}(G\times_X\Ga_{\sum_{i\in I}nD_i}).
\end{align*}
Note that $G_{\sum_{i\in I}nD_i}$ is a smooth affine group scheme over $(\Div_X^d)^I$. For any $n_1\leq n_2$, we can pull back the counit of the base change-Weil restriction adjunction
\begin{align*}
&G_{\sum_{i\in I}n_2D_i}\times_{(\Div_X^d)^I}\Ga_{\sum_{i\in I}n_2D_i} \\
= \,&\R_{\Ga_{\sum_{i\in I}n_2D_i}/(\Div_X^d)^I}(G\times_X\Ga_{\sum_{i\in I}n_2D_i})\times_{(\Div_X^d)^I}\Ga_{\sum n_2D_i}\ra G\times_X\Ga_{\sum_{i\in I}n_2D_i}
\end{align*}
along $\Ga_{\sum_{i\in I}n_1D_i}\ra\Ga_{\sum_{i\in I}n_2D_i}$ to obtain a morphism
\begin{align*}
G_{\sum_{i\in I}n_2D_i}\times_{(\Div_X^d)^I}\Ga_{\sum_{i\in I}n_1D_i}\ra G\times_X\Ga_{\sum_{i\in I}n_1D_i},
\end{align*}
which induces a morphism $G_{\sum_{i\in I}n_2D_i}\ra G_{\sum_{i\in I}n_1D_i}$ by adjunction.

Write $G_{\sum_{i\in I}\infty D_i}$ for the resulting inverse limit $\varprojlim_nG_{\sum_{i\in I}nD_i}$, which is an affine group scheme over $(\Div_X^d)^I$.

\subsection{}\label{ss:weilrestriction}
We conclude by introducing our Weil restrictions. Let $m:Y\ra X$ be a finite generically \'etale morphism, where $Y$ is a connected smooth proper curve over $k$. Write $F$ for the function field of $Y$, and let $H$ be a parahoric group scheme over $Y$. Applying the discussion in \ref{ss:parahoricgroup} to $H$ over $Y$ yields an open subscheme $V$ of $Y$, a finite Galois extension $\wt{F}$ of $F$, a finite separable extension $\wh{F}$ of $\wt{F}$, a maximal subtorus $A$ of $H_F$, and a Borel subgroup $C$ of $H_{\wh{F}}$. After shrinking $V$, we may assume that $m^{-1}(m(V))=V$ and $m|_V$ is \'etale. Write $U$ for $m(V)$.

Form the Weil restriction $\R_{Y/X}H$. Its generic fiber is the connected reductive group $\R_{F/Q}(H_F)$ over $Q$, and for all closed points $x$ of $X$, we have
\begin{align*}
(\R_{Y/X}H)_{\cO_x} = \R_{(Y\times_X\cO_x)/\cO_x}(H_{Y\times_X\cO_x}) = \prod_{y\in m^{-1}(x)}\R_{\cO_y/\cO_x}(H_{\cO_y}).
\end{align*}
Now \cite[Fact F.1]{Kal19} shows that this is parahoric in the sense of \cite[5.2.6]{BT84}. Thus we may take our parahoric group scheme $G$ to be $\R_{Y/X}H$ in this subsection.

The restriction $G_U$ equals $\R_{V/U}(H_V)$, and because $H_V$ is reductive over $V$ and $m|_V$ is finite \'etale, we see that $G_U$ is reductive over $U$. As the $*$-action of $\Ga_Q$ on a based root datum of $G_Q$ is induced from the $*$-action of $\Ga_F$ on a based root datum of $H_F$, after enlarging $\wt{F}$ we may choose $\wt{Q}=\wt{F}$. Then we may take $\wh{Q}=\wh{F}$. Furthermore, we may choose $T=\R_{F/Q}A$. The natural commutative square 
\begin{align*}
\xymatrix{G_{\wh{F}}\ar[r]^-\sim & \prod_\io H_{\wh{F}} \\
T_{\wh{F}}\ar[r]^-\sim\ar@{^{(}->}[u] & \prod_\io A_{\wh{F}},\ar@{^{(}->}[u]
}
\end{align*}
where $\io$ runs over $\Hom_Q(F,\wh{F})$, indicates that we may take $B=\prod_\io C$. Because $\wh{V}$ is \'etale over $U$, we see that $T_{\wh{F}}$ and $B_{\wh{F}}$ extend over $\wh{V}$.

\subsection{}\label{ss:weilrestrictionbundles}
Maintain the notation of \ref{ss:weilrestriction}, and let $R$ be a scheme over $X$. Note that $\R_{(Y\times_XR)/R}(H\times_XR)=G_R$. Write $\ve:G_{Y\times_X R}\ra H\times_XR$ for the counit of the base change-Weil restriction adjunction, which is a morphism of group schemes over $Y\times_XR$. For any $H\times_XR$-bundle $\cH$ on $Y\times_XR$, the Weil restriction $\R_{(Y\times_XR)/R}\cH$ is a $G_R$-bundle on $R$, as Weil restriction commutes with products. For any $G_R$-bundle $\cG$ on $R$, the pullback $Y\times_X\cG$ is a $G_{Y\times_XR}$-bundle on $Y\times_XR$, so we can form the pushforward $H\times_XR$-bundle $\ve_*(Y\times_X\cG)$.

Since $m$ is a finite morphism of connected curves, \cite[lemma 3.3]{Bre19} shows that this yields an equivalence of categories between $G_R$-bundles on $R$ and $H\times_XR$-bundles on $Y\times_XR$. Let $N$ be a finite closed subscheme of $X$, and write $M$ for $m^{-1}(N)$. By applying this to $R=X\times S$ and $R=N\times S$, we get an isomorphism $c:\Bun_{G,N}\ra^\sim\Bun_{H,M}$.

\section{Hecke stacks and Beilinson--Drinfeld affine Grassmannians}\label{s:grass}
In this section, we introduce symmetrized \emph{Hecke stacks} and \emph{Beilinson--Drinfeld affine Grassmannians}. Instead of parameterizing $G$-bundles on $X$, points on $X$, and isomorphisms between these $G$-bundles away from said points, these symmetrized versions more generally parametrize \emph{divisors} on $X$, along with the other data. This divisorial version appears naturally when taking preimages of points under $m:Y\ra X$.

Our symmetrized Hecke stacks and Beilinson--Drinfeld affine Grassmannians enjoy many of the same properties and structures as in the unsymmetrized special case. We start by defining them, including \emph{convolution} versions thereof, which will be invaluable in \S\ref{s:frob}. Next, using the \emph{Beauville--Laszlo theorem}, we study their relation to each other as well as their relative position stratifications. Finally, we recall the \emph{geometric Satake correspondence}, which describes equivariant perverse sheaves on (usual, unsymmetrized) Beilinson--Drinfeld affine Grassmannians in terms of representations of the dual group.

\subsection{}\label{ss:heckestack}
First, we introduce a symmetrized, convolution version of the \emph{Hecke stack}. Let $I_1,\dotsc,I_k$ be an ordered partition of $I$, and let $N$ be a finite closed subscheme of $X$.
\begin{defn*}
Write $\Hck^{(d)(I_1,\dotsc,I_k)}_{G,N,I}$ for the prestack over $k$ whose $S$-points parametrize data consisting of
\begin{enumerate}[i)]
\item for all $i$ in $I$, a point $D_i$ of $\Div_{X\ssm N}^d(S)$,
\item for all $0\leq j\leq k$, an object $(\cG_j,\psi_j)$ of $\Bun_{G,N}(S)$,
\item for all $1\leq j\leq k$, an isomorphism
  \begin{align*}
\phi_j:\cG_{j-1}|_{X\times S\ssm\sum_{i\in I_j}D_i}\ra^\sim\cG_j|_{X\times S\ssm\sum_{i\in I_j}D_i}
  \end{align*}
 such that $\psi_j\circ\phi_j|_{N\times S}=\psi_{j-1}$.
\end{enumerate}
When $d=1$, we omit it from our notation, and when $N=\varnothing$, we omit it from our notation. For finite closed subschemes $N_1$ and $N_2$ of $X$ such that $N_1\subseteq N_2$, pulling back the $\psi_j$ yields a morphism $\Hck^{(d)(I_1,\dotsc,I_k)}_{G,N_2,I}\ra\Hck^{(d)(I_1,\dotsc,I_k)}_{G,N_1,I}$.

For any $0\leq j\leq k$, write $p_j:\Hck^{(d)(I_1,\dotsc,I_k)}_{G,N,I}\ra\Bun_{G,N}$ for the morphism sending the above data to $(\cG_j,\psi_j)$. We also have a morphism
\begin{align*}
\fp:\Hck^{(d)(I_1,\dotsc,I_k)}_{G,N,I}\ra(\Div_{X\ssm N}^d)^I
\end{align*}
that sends the above data to $(D_i)_{i\in I}$. And if $I_1,\dotsc,I_k$ refines another ordered partition $I'_1,\dotsc,I_{k'}'$ of $I$, we get a morphism
\begin{align*}
\pi^{(I_1,\dotsc,I_k)}_{(I_1',\dotsc,I_{k'}')}:\Hck^{(d)(I_1,\dotsc,I_k)}_{G,N,I}\ra\Hck^{(d)(I_1',\dotsc,I_{k'}')}_{G,N,I}
\end{align*}
by preserving i), preserving $(\cG_0,\psi_0)$, and for all $1\leq j'\leq k'$, taking $\phi_{j'}$ to be the composition of $\phi_j$ over $1\leq j\leq k$ with $I_j\subseteq I_{j'}'$.
\end{defn*}
Because the $D_i$ are disjoint from $N\times S$, for any $0\leq j\leq k$ we see that the commutative square
\begin{align*}
\xymatrix{\Hck^{(d)(I_1,\dotsc,I_k)}_{G,N,I}\ar[r]\ar[d]^-{(p_j,\fp)} & \Hck^{(d)(I_1,\dotsc,I_k)}_{G,I}\ar[d]^-{(p_j,\fp)} \\
\Bun_{G,N}\times(\Div_{X\ssm N}^d)^I \ar[r] & \Bun_G\times(\Div_X^d)^I
}
\end{align*}
is Cartesian. Therefore \ref{ss:bunG} shows that $\Hck^{(d)(I_1,\dotsc,I_k)}_{G,N,I}\ra\Hck^{(d)(I_1,\dotsc,I_k)}_{G,I}|_{(\Div_{X\ssm N}^d)^I}$ is an $\R_{N/k}(G_N)$-bundle. As the morphism 
\begin{align*}
(p_k,\fp):\Hck^{(d)(I_1,\dotsc,I_k)}_{G,I}\ra\Bun_G\times(\Div_X^d)^I
\end{align*}
 is ind-projective \cite[Proposition 3.12]{AH13}\footnote{In \cite{AH13}, only the $d=1$ case is considered. However, the proof of the key step \cite[Proposition 3.7]{AH13} is phrased entirely in terms of relative effective Cartier divisors, so it works for any $d$. Also, \cite{AH13} uses $p_0$ instead of $p_k$, but this makes no difference.}, so we see that $\Hck^{(d)(I_1,\dotsc,I_k)}_{G,I}$ and hence more generally $\Hck^{(d)(I_1,\dotsc,I_k)}_{G,N,I}$ is an ind-algebraic stack over $k$.

\subsection{}\label{defn:BDaffinegrassmannian}
We define similar versions of the \emph{Beilinson--Drinfeld affine Grassmannian}.
\begin{defn*}
Write $\Gr^{(d)(I_1,\dotsc,I_k)}_{G,I}$ for the presheaf over $k$ whose $S$-points parametrize data consisting of
\begin{enumerate}[i)]
\item an object $((D_i)_{i\in I},(\cG_j)_{j=0}^k,(\phi_j)_{j=1}^k)$ of $\Hck^{(d)(I_1,\dotsc,I_k)}_{G,I}(S)$,
\item an isomorphism $\te:\cG_k\ra^\sim G\times S$ of $G\times S$-bundles.
\end{enumerate}
When $d=1$, we omit it from our notation. We have a morphism
\begin{align*}
\fp:\Gr^{(d)(I_1,\dotsc,I_k)}_{G,I}\ra(\Div_X^d)^I
\end{align*}
 as in \ref{ss:heckestack}. If $I_1,\dotsc,I_k$ refines another ordered partition $I_1',\dotsc,I_{k'}'$ of $I$, we also get a morphism $\pi^{(I_1,\dotsc,I_k)}_{(I_1',\dotsc,I_{k'}')}:\Gr^{(d)(I_1,\dotsc,I_k)}_{G,I}\ra\Gr^{(d)(I_1',\dotsc,I_{k'}')}_{G,I}$ as in \ref{ss:heckestack}.
\end{defn*}
Since $\Gr^{(d)(I_1,\dotsc,I_k)}_{G,I}$ is defined via a Cartesian square
\begin{align*}
\xymatrix{\Gr^{(d)(I_1,\dotsc,I_k)}_{G,I}\ar[r]\ar[d] & \Hck^{(d)(I_1,\dotsc,I_k)}_{G,I}\ar[d]^-{p_k}\\
\Spec{k}\ar[r]^-G & \Bun_G,
}
\end{align*}
we see from \ref{ss:heckestack} that $\fp:\Gr^{(d)(I_1,\dotsc,I_k)}_{G,I}\ra(\Div_X^d)^I$ is ind-projective.

\subsection{}\label{ss:unfoldsymmetricaction}
Our symmetrized objects are related to the unsymmetrized special case as follows. Write $[d]$ for the finite set $\{1,\dotsc,d\}$, and for any finite set $J$, write $d\times J$ for $[d]\times J$. We see that the squares
\begin{align*}
\xymatrix{
\Hck^{(d)(I_1,\dotsc,I_k)}_{G,N,I}\ar[d]^-\fp & \Hck^{(d\times I_1,\dotsc,d\times I_k)}_{G,N,d\times I}\ar[l]_-\al\ar[d]^-\fp \\
(\Div_{X\ssm N}^d)^I & (X\ssm N)^{d\times I}\ar[l]_-\al
}
\xymatrix{
\Gr^{(d)(I_1,\dotsc,I_k)}_{G,I}\ar[d]^-\fp & \Gr^{(d\times I_1,\dotsc,d\times I_k)}_{G,d\times I}\ar[l]_-\al\ar[d]^-\fp \\
(\Div_X^d)^I & X^{d\times I}\ar[l]_-\al 
}
\end{align*}
are Cartesian, where the $\al$ send $(x_{h,i})_{h\in[d],i\in I}$ to $(\sum_{h=1}^d\Ga_{x_{h,i}})_{i\in I}$ and preserve all other data. Since the bottom arrows are finite surjective, we see that the top arrows are finite surjective as well. In addition, if $I_1,\dotsc,I_k$ refines another ordered partition $I'_1,\dotsc,I'_{k'}$ of $I$, we see that the squares
\begin{align*}
\xymatrix{
\Hck^{(d)(I_1,\dotsc,I_k)}_{G,N,I}\ar[d]^-{\pi^{(I_1,\dotsc,I_k)}_{(I'_1,\dotsc,I'_{k'})}} & \Hck^{(d\times I_1,\dotsc,d\times I_k)}_{G,N,d\times I}\ar[l]_-\al\ar[d]^-{\pi^{(I_1,\dotsc,I_k)}_{(I'_1,\dotsc,I'_{k'})}} \\
\Hck^{(d)(I_1',\dotsc,I'_{k'})}_{G,N,I} & \Hck^{(d\times I_1',\dotsc,d\times I'_{k'})}_{G,N,d\times I}\ar[l]_-\al 
}
\xymatrix{
\Gr^{(d)(I_1,\dotsc,I_k)}_{G,I}\ar[d]^-{\pi^{(I_1,\dotsc,I_k)}_{(I'_1,\dotsc,I'_{k'})}} & \Gr^{(d\times I_1,\dotsc,d\times I_k)}_{G,d\times I}\ar[l]_-\al\ar[d]^-{\pi^{(I_1,\dotsc,I_k)}_{(I'_1,\dotsc,I'_{k'})}} \\
\Gr^{(d)(I_1',\dotsc,I'_{k'})}_{G,I} & \Gr^{(d\times I_1',\dotsc,d\times I'_{k'})}_{G,d\times I}\ar[l]_-\al 
}
\end{align*}
are also Cartesian.

In all the above squares, note that $\fS_d^I$ has a right action on the right-hand sides via permuting the $(x_{h,i})_{h\in[d],i\in I}$. With respect to this action, the $\al$ are invariant and the right arrows are equivariant.

\subsection{}\label{ss:beauvillelaszlo}
We now recall the \emph{Beauville--Laszlo theorem}. Let $S$ be a scheme over $k$, and let $D$ be an $S$-relative effective Cartier divisor of $X\times S$. For any non-negative integer $n$, the $S$-relative effective Cartier divisor $nD$ of $X\times S$ is finite flat over $S$, so its structure sheaf $\cO_{nD}$ yields a finite flat $\cO_S$-algebra. For any $n_1\leq n_2$, we obtain a morphism $\cO_{n_2D}\ra\cO_{n_1D}$. Write $\cO_D^\wedge$ for the resulting inverse limit $\varprojlim_n\cO_{nD}$, and write $(X\times S)^\wedge_D$ for its relative spectrum $\ul{\Spec}_S\,\cO_D^\wedge$. The contravariance of $\ul{\Spec}_S$ provides a closed immersion $nD\ra(X\times S)^\wedge_D$. By working locally and reducing to affines, we obtain a natural morphism $i:(X\times S)^\wedge_D\ra X\times S$ that preserves the closed subschemes $nD$ \cite[Proposition 2.12.6]{BD99}.

Write $\Vect(X\times S)$ for the category of vector bundles on $X\times S$. Observe that we have an exact tensor functor
\begin{align*}
\Vect(X\times S) &\ra
                                                         \left\{\!\!\!\begin{tabular}{c|l}
                                                           \multirow{3}{*}{$(\cV_1,\cV_2,\vp)$} & $\cV_1$ is a vector bundle on $X\times S\ssm D$,\\
& $\cV_2$ is a vector bundle on $(X\times S)^\wedge_D$, and\\
& $\te:\cV_1|_{(X\times S)^\wedge_D\ssm D}\ra^\sim\cV_2|_{(X\times S)^\wedge_D\ssm D}$
                                                         \end{tabular}\!\right\}
\end{align*}
given by $\cV\mapsto(\cV|_{X\times S\ssm D},\cV|_{(X\times S)^\wedge_D},\id)$.
\begin{thm*}[{\cite[Theorem 2.12.1]{BD99}}]
This yields an equivalence of categories.
\end{thm*}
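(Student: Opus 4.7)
Plan: the statement is the classical Beauville--Laszlo theorem, and my strategy is to reduce to an affine-local situation and then apply faithfully flat descent. Since the claimed equivalence is Zariski-local on $X\times S$, I would first cover $X\times S$ by affine opens $\Spec A$ on which $D$ is cut out by a single non-zero-divisor $f\in A$; such a principal local description exists because $D$ is an $S$-relative effective Cartier divisor in the smooth relative curve $X\times S$. In this local picture $(X\times S)^\wedge_D$ identifies with $\Spec\hat{A}_f$, where $\hat{A}_f=\varprojlim_n A/f^nA$, and the right-hand category becomes the category of triples $(M_1,M_2,\vp)$ with $M_1$ finitely generated projective over $A[1/f]$, $M_2$ finitely generated projective over $\hat{A}_f$, and $\vp$ an isomorphism after base change to $\hat{A}_f[1/f]$. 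I would check separately that the formation of $\ul{\Spec}_S\,\cO_D^\wedge$ is compatible with Zariski localization on $X\times S$ so that this reduction is legitimate.

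The core input is then the classical statement that the map $A\ra A[1/f]\times\hat{A}_f$ is faithfully flat. This decomposes into two substeps. First, flatness of $\hat{A}_f$ over $A$: one proves this via the Mittag-Leffler property of the inverse system $(A/f^nA)_n$ combined with the non-zero-divisor hypothesis on $f$, passing to the limit to deduce flatness of each $\Tor$-vanishing statement. Second, exactness of $0\ra A\ra A[1/f]\oplus\hat{A}_f\ra\hat{A}_f[1/f]\ra 0$, where the second map is the difference of the two canonical localizations; the left injectivity is immediate from $f$ being a non-zero-divisor, while surjectivity on the right follows by approximating any element of $\hat{A}_f[1/f]$ modulo $A[1/f]$ by a suitable Cauchy truncation in $\hat{A}_f$. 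Compatibility with tensor products yields the identification $A[1/f]\otimes_A\hat{A}_f=\hat{A}_f[1/f]$, so the fiber product of the cover is the right thing.

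Once these two inputs are in hand, faithfully flat descent for quasi-coherent sheaves along $\Spec A[1/f]\sqcup\Spec\hat{A}_f\ra\Spec A$ formally produces the desired equivalence. Explicitly, the inverse functor sends a triple $(M_1,M_2,\vp)$ to the equalizer $M=\ker\bigl(M_1\oplus M_2\ra\hat{A}_f[1/f]\otimes_{A[1/f]}M_1\bigr)$ where the map is the difference of the localization of $M_1$ and $\vp^{-1}$ applied to the localization of $M_2$. One then verifies that $M$ is a finitely generated projective $A$-module of the expected rank by checking both properties after faithfully flat base change to $A[1/f]$ and $\hat{A}_f$, where they hold by hypothesis. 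Finally, I would globalize by showing that the locally constructed equivalences are compatible with further Zariski localization in $A$, so they glue across the affine cover of $X\times S$, and that the tensor and exactness properties follow from the corresponding properties of the equalizer construction.

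The main obstacle is the flatness of $\hat{A}_f$ over $A$ in the setting where $S$ (and hence $A$) need not be Noetherian; this is the technical heart of \cite[Theorem 2.12.1]{BD99}, whereas the exactness of the Beauville--Laszlo sequence and the subsequent faithfully flat descent are essentially formal. In practice one would simply invoke the reference rather than reproduce the Mittag-Leffler argument in full.
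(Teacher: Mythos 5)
The paper offers no proof of this statement; it is cited directly from Beilinson--Drinfeld, so there is no argument in the text to compare against. You are therefore being judged against the actual content of the Beauville--Laszlo theorem.

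Your proposal has a genuine gap at its core. You plan to establish the equivalence by faithfully flat descent along $\Spec A[1/f]\sqcup\Spec\hat{A}_f\ra\Spec A$, and you identify flatness of $\hat{A}_f$ over $A$ as the crux, proposing to prove it via Mittag--Leffler. But Mittag--Leffler for the tower $(A/f^nA)_n$ only gives exactness of the inverse limit in appropriate short exact sequences of towers; it does \emph{not} give flatness of $\hat{A}_f$ as an $A$-module. And in fact flatness simply fails in general: for non-Noetherian $A$ (and $A$ here is the coordinate ring of an open in $X\times S$ for an arbitrary scheme $S$ over $k$, so non-Noetherian rings genuinely occur), there are explicit examples where the $f$-adic completion of $A$ is not $A$-flat. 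This is precisely the subtlety that makes the Beauville--Laszlo theorem nontrivial, and it is why the name is attached to it: the equivalence of categories holds \emph{despite} the failure of faithfully flat descent along that covering.

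The correct route, as in Beauville--Laszlo's original note and in [BD99], does not invoke flatness of $\hat{A}_f$. One instead establishes the exactness of $0\ra A\ra A[1/f]\oplus\hat{A}_f\ra\hat{A}_f[1/f]\ra 0$ (this part of your sketch is fine and does not require flatness, only that $f$ is a non-zero-divisor), then directly constructs the quasi-inverse functor by the equalizer/fiber-product formula you wrote, and verifies by hand --- exploiting that vector bundles are finitely generated projective, hence flat, so they tensor well against the exact sequence --- that the unit and counit are isomorphisms. The verification that the constructed $A$-module is finitely generated projective and recovers the given pair $(M_1,M_2)$ upon base change is the place where one must work carefully with $f$-torsion and completion, and it cannot be reduced to an appeal to fppf descent. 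So while your formulas and reduction to the affine-local case are sound, the logical skeleton of your argument does not close; you would need to replace the descent appeal with the direct Beauville--Laszlo verification.
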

More generally, the Tannakian description of $G$-bundles \cite[Theorem 4.8]{Bro13} implies that an analogous equivalence of categories holds if we replace ``vector bundle'' everywhere with ``$G$-bundle.''

\subsection{}\label{ss:grfactorization}
Using the Beauville--Laszlo theorem, we get the following reinterpretation of the Beilinson--Drinfeld affine Grassmannian. By pulling back, we see that an $S$-point of $\Gr^{(d)(I_1,\dotsc,I_k)}_{G,I}$ yields data consisting of
\begin{enumerate}[i)]
\item for all $i$ in $I$, a point $D_i$ of $\Div_X^d(S)$,
\item for all $0\leq j\leq k$, a $G|_{(X\times S)^\wedge_{\sum_{i\in I}D_i}}$-bundle $\cG_j$ on $(X\times S)^\wedge_{\sum_{i\in I}D_i}$,
\item for all $1\leq j\leq k$, an isomorphism
  \begin{align*}
\phi_j:\cG_{j-1}|_{(X\times S)^\wedge_{\sum_{i\in I}D_i}\ssm\sum_{i\in I_j}D_i}\ra^\sim\cG_j|_{(X\times S)^\wedge_{\sum_{i\in I}D_i}\ssm\sum_{i\in I_j}D_i},
  \end{align*}
\item an isomorphism $\te:\cG_k\ra^\sim G|_{(X\times S)^\wedge_{\sum_{i\in I}D_i}}$ of $G|_{(X\times S)^\wedge_{\sum_{i\in I}D_i}}$-bundles.
\end{enumerate}
The Beauville--Laszlo theorem enables us to use iii) and iv) to glue ii) with the trivial bundle on $X\times S\ssm\sum_{i\in I}D_i$. Hence conversely $\Gr_{G,I}^{(d)(I_1,\dotsc,I_k)}(S)$ parametrizes precisely the above data.

Write $(\Div_X^d)^I_\circ\subseteq(\Div_X^d)^I$ for the subsheaf of $(D_i)_{i\in I}$ such that the $D_i$ are pairwise disjoint. As the preimage of $(\Div_X^d)^I_\circ$ in $X^{d\times I}$ consists of the complement of certain diagonals, we see that $(\Div_X^d)^I_\circ$ is an open subscheme of $(\Div_X^d)^I$. The above description of $\Gr^{(d)(I_1,\dotsc,I_k)}_{G,I}$ indicates that we have a natural isomorphism
\begin{align*}
\Gr^{(d)(I_1,\dotsc,I_k)}_{G,I}|_{(\Div_X^d)^I_\circ}\ra^\sim\Big(\prod_{i\in I}\Gr^{(d)(i)}_{G,i}\Big)\Big|_{(\Div_X^d)^I_\circ}.
\end{align*}

\subsection{}
The above enables us to decompose the Beilinson--Drinfeld affine Grassmannian according to our ordered partition $I_1,\dotsc,I_k$ as follows. Recall the affine group scheme $G_{\sum_{i\in I}\infty D_i}$ over $(\Div_X^d)^I$ from \ref{ss:divgroups}. The description of $\Gr^{(d)(I_1,\dotsc,I_k)}_{G,I}$ given in \ref{ss:grfactorization} shows that it has a left action of $G_{\sum_{i\in I}\infty D_i}$ via composition with $\te$. This description further indicates that $S$-points of the stack-theoretic quotient $\Gr^{(d)(I_1,\dotsc,I_k)}_{G,I}/G_{\sum_{i\in I}\infty D_i}$ parametrize data consisting of 
\begin{enumerate}[i)]
\item for all $i$ in $I$, a point $D_i$ of $\Div_X^d(S)$,
\item for all $0\leq j\leq k$, a $G|_{(X\times S)^\wedge_{\sum_{i\in I}D_i}}$-bundle $\cG_j$ on $(X\times S)^\wedge_{\sum_{i\in I}D_i}$,
\item for all $1\leq j\leq k$, an isomorphism
  \begin{align*}
    \phi_j:\cG_{j-1}|_{(X\times S)^\wedge_{\sum_{i\in I}D_i}\ssm\sum_{i\in I_j}D_i}\ra^\sim\cG_j|_{(X\times S)^\wedge_{\sum_{i\in I}D_i}\ssm\sum_{i\in I_j}D_i}.
  \end{align*}
\end{enumerate}
In particular, we have a morphism
\begin{align*}
\ka:\Gr^{(d)(I_1,\dotsc,I_k)}_{G,I}&/G_{\sum_{i\in I}\infty D_i}\\
&\ra(\Gr^{(d)(I_1)}_{G,I_1}/G_{\sum_{i\in I_1}\infty D_i})\times\dotsb\times(\Gr^{(d)(I_k)}_{G,I_k}/G_{\sum_{i\in I_k}\infty D_i})
\end{align*}
that sends the above to $(((D_i)_{i\in I_1},(\cG_j)_{j=0}^1,\phi_1),\dotsc,((D_i)_{i\in I_k},(\cG_j)_{j=k-1}^k,\phi_k))$.

\subsection{}
We now explain how the Hecke stack combines the moduli space of $G$-bundles with the Beilinson--Drinfeld affine Grassmannian. Let $n$ be a non-negative integer. Applying Definition \ref{ss:bunG} to $T=(\Div_X^d)^I$ and $D=\Ga_{\sum_{i\in I}nD_i}$ yields a smooth algebraic stack $\Bun_{G,\Ga_{\sum_{i\in I}nD_i}}$ over $(\Div_X^d)^I$. As noted in \ref{ss:bunG}, it is a $G_{\sum_{i\in I}nD_i}$-bundle over $\Bun_G\times(\Div_X^d)^I$, and the $G_{\sum_{i\in I}nD_i}$-action is even defined over $(\Div_X^d)^I$. Write $\Bun_{G,\Ga_{\sum_{i\in I}\infty D_i}}$ for the limit $\varprojlim_n\Bun_{G,\Ga_{\sum_{i\in I}nD_i}}$, which consequently inherits a left action of $G_{\sum_{i\in I}\infty D_i}$.

Consider the stack-theoretic quotient
\begin{align*}
(\Gr^{(d)(I_1,\dotsc,I_k)}_{G,I}\times_{(\Div^d_X)^I}\Bun_{G,\Ga_{\sum_{i\in I}\infty D_i}})/G_{\sum_{i\in I}\infty D_i},
\end{align*}
and write $\cA$ for the prestack over $k$ whose $S$-points parametrize data consisting of
\begin{enumerate}[i)]
\item an object $((D_i)_{i\in I},(\cG_j)_{j=0}^k,(\phi_j)_{j=1}^k)$ of $\Hck^{(d)(I_1,\dotsc,I_k)}_{G,I}(S)$, 
\item an isomorphism $\te:\cG_k|_{(X\times S)^\wedge_{\sum_{i\in I}D_i}}\ra^\sim G|_{(X\times S)^\wedge_{\sum_{i\in I}D_i}}$ of $G|_{(X\times S)^\wedge_{\sum_{i\in I}D_i}}$-bundles.
\end{enumerate}
Note that $G_{\sum_{i\in I}\infty D_i}$ has a left action on $\cA$ via composition with $\te$. We see that this exhibits the natural morphism $\cA\ra\Hck^{(d)(I_1,\dotsc,I_k)}_{G,I}$ as a $G_{\sum_{i\in I}\infty D_i}$-bundle. We also have a morphism
\begin{align*}
\cA\ra\Gr^{(d)(I_1,\dotsc,I_k)}_{G,I}\times_{(\Div^d_X)^I}\Bun_{G,\Ga_{\sum_{i\in I}\infty D_i}}
\end{align*}
given by pulling back $((D_i)_{i\in I},(\cG_j)_{j=0}^k,(\phi_j)_{j=1}^k)$, considering $\cG_k$ in $\Bun_G(S)$, and taking $\te$ for the trivialization. The Beauville--Laszlo theorem implies that this is a $G_{\sum_{i\in I}\infty D_i}$-equivariant isomorphism.

Therefore quotienting by $G_{\sum_{i\in I}\infty D_i}$ induces an isomorphism
\begin{align*}
\Hck^{(d)(I_1,\dotsc,I_k)}_{G,I}\ra^\sim(\Gr^{(d)(I_1,\dotsc,I_k)}_{G,I}\times_{(\Div^d_X)^I}\Bun_{G,\Ga_{\sum_{i\in I}\infty D_i}})/G_{\sum_{i\in I}\infty D_i}.
\end{align*}
Under this identification, write $\de:\Hck^{(d)(I_1,\dotsc,I_k)}_{G,I}\ra\Gr^{(d)(I_1,\dotsc,I_k)}_{G,I}/G_{\sum_{i\in I}\infty D_i}$ for projection onto the first factor.

\subsection{}\label{ss:affinegrassmannian}
We turn to the fibers of the Beilinson--Drinfeld affine Grassmannian. Let $x$ be a closed point of $X$, and write $*$ for the singleton set. The description of $\Gr^{(*)}_{G,*}$ given in \ref{ss:grfactorization} shows that $\Gr^{(*)}_{G,*}|_x$ is naturally isomorphic to the affine Grassmannian of $G_{\cO_x}$ over $\ka(x)$ in the sense of \cite[(1.2.1)]{Zhu17}. Recall that this equals the fpqc sheaf quotient $L(G_{\cO_x})/L^+(G_{\cO_x})$, where $L(G_{\cO_x})$ denotes the loop group of $G_{\cO_x}$ over $\ka(x)$, and $L^+(G_{\cO_x})$ denotes the positive loop group of $G_{\cO_x}$ over $\ka(x)$ \cite[Proposition 1.3.6]{Zhu17}. We see from \ref{defn:BDaffinegrassmannian} that $\Gr^{(*)}_{G,*}|_x$ is an ind-projective scheme over $\ka(x)$.

\subsection{}\label{ss:coweightbounds}
Now we describe the relative position stratification on unsymmetrized affine Grassmannians. Write $X^+_\bullet(T)$ for the set of dominant coweights of $G$ with respect to $T$ and $B$, and let $x$ be a closed point of $U$. Because $G_{\cO_x}$ is reductive, we see that $G_{Q_x}$ is quasi-split and splits over an unramified extension of $Q_x$. 

Let $\om$ be in $X^+_\bullet(T)$, viewed as a dominant coweight of $G_{Q_x}$. Writing $\ka(x)_\om$ for the residue field of the field of definition of $\om$, we see that $\om$ yields a closed affine Schubert variety $\Gr_{x,\om}'\subseteq\Gr^{(*)}_{G,*}|_x\times_x\Spec{\ka(x)_\om}$ as in \cite[p.~83]{Zhu17}. The union of the $\Gal(\ka(x)_\om/\ka(x))$-translates of $\Gr_{x,\om}'$ descends to a closed subvariety $\Gr_{x,\om}\subseteq\Gr^{(*)}_{G,*}|_x$. Recall that $\Gr_{x,\om}'$ and hence $\Gr_{x,\om}$ is projective \cite[Proposition 2.1.5 (1)]{Zhu17}.

Write $\Gr^{(*)}_{G,*,\om}|_U\subseteq\Gr^{(*)}_{G,*}|_U$ for the scheme-theoretic closure of $\bigcup_x\Gr_{x,\om}$ in $\Gr^{(*)}_{G,*}|_U$, where $x$ runs over closed points of $U$. More generally, for $\ul\om=(\om_i)_{i\in I}$ in $X_\bullet^+(T)^I$, write $\Gr^{(I_1,\dotsc,I_k)}_{G,I,\ul\om}|_{U^I}\subseteq\Gr^{(I_1,\dotsc,I_k)}_{G,I}|_{U^I}$ for the scheme-theoretic closure of 
\begin{align*}
\Big(\prod_{i\in I}\Gr^{(i)}_{G,i,\om_i}|_U\Big)\Big|_{U^I_\circ}\subseteq\Gr^{(I_1,\dotsc,I_k)}_{G,I}|_{U^I_\circ}
\end{align*}
in $\Gr^{(I_1,\dotsc,I_k)}_{G,I}|_{U^I}$, where we use \ref{ss:grfactorization} to view the left-hand side as a closed ind-subscheme of the right-hand side. From the projectivity of the $\Gr_{x,\om}$ and the globalization procedure of \cite[Remark 4.3]{Ric14}, we see that $\Gr^{(I_1,\dotsc,I_k)}_{G,I,\ul\om}|_{U^I}$ is projective over $U^I$. Note that $\Gr^{(I_1,\dotsc,I_k)}_{G,I,\ul\om}|_{U^I}$ depends only on the $\Ga_Q^I$-orbit of $\ul\om$.

\subsection{}\label{ss:symmetrizedbounds}
By bootstrapping from \ref{ss:coweightbounds}, we define relative position stratifications on symmetrized Beilinson--Drinfeld affine Grassmannians as follows. View elements of $\fS_d^I$ as bijections $d\times I\ra^\sim d\times I$ that preserve the $I$-factor. Let $\Om$ be a finite $\fS_d^I$-stable and $\Ga_Q^{d\times I}$-stable subset of $X_\bullet^+(T)^{d\times I}$, and write $\Gr^{(d\times I_1,\dotsc,d\times I_k)}_{G,d\times I,\Om}|_{U^{d\times I}}$ for the union 
\begin{align*}
\Gr^{(d\times I_1,\dotsc,d\times I_k)}_{G,d\times I,\Om}|_{U^{d\times I}}\coloneqq\bigcup_{\ul\om\in\Om}\Gr^{(d\times I_1,\dotsc,d\times I_k)}_{G,d\times I,\ul\om}|_{U^{d\times I}}\subseteq\Gr^{(d\times I_1,\dotsc,d\times I_k)}_{G,d\times I}|_{U^{d\times I}}.
\end{align*}
Note that $\Gr^{(d\times I_1,\dotsc,d\times I_k)}_{G,d\times I,\Om}|_{U^{d\times I}}$ is projective over $U^{d\times I}$. As $\Om$ is stable under $\fS_d^I$, we see that $\Gr^{(d\times I_1,\dotsc,d\times I_k)}_{G,d\times I,\Om}|_{U^{d\times I}}$ is also stable under $\fS_d^I$. Therefore, writing $\Gr^{(d)(I_1,\dotsc,I_k)}_{G,I,\Om}|_{(\Div_U^d)^I}$ for the scheme-theoretic image of $\Gr^{(d\times I_1,\dotsc,d\times I_k)}_{G,d\times I,\Om}|_{U^{d\times I}}$ under the morphism
\begin{align*}
\al:\Gr^{(d\times I_1,\dotsc,d\times I_k)}_{G,d\times I}|_{U^{d\times I}}\ra\Gr^{(d)(I_1,\dotsc,I_k)}_{G,I}|_{(\Div^d_U)^I}
\end{align*}
obtained from \ref{ss:unfoldsymmetricaction} via restriction, we see that $\Gr^{(d)(I_1,\dotsc,I_k)}_{G,I,\Om}|_{(\Div_U^d)^I}$ is schematic and proper over $(\Div_U^d)^I$. Moreover, the closed subset of $\Gr^{(d\times I_1,\dotsc,d\times I_k)}_{G,d\times I}|_{U^{d\times I}}$ underlying $\al^{-1}(\Gr^{(d)(I_1,\dotsc,I_k)}_{G,I,\Om}|_{(\Div^d_U)^I})$ is precisely $\Gr^{(d\times I_1,\dotsc,d\times I_k)}_{G,d\times I,\Om}|_{U^{d\times I}}$. If $I_1,\dotsc,I_k$ refines another ordered partition $I'_1,\dotsc,I'_{k'}$ of $I$, we see that $\pi^{(I_1,\dotsc,I_k)}_{(I_1',\dotsc,I'_{k'})}$ sends $\Gr^{(d)(I_1,\dotsc,I_k)}_{G,I,\Om}|_{(\Div^d_U)^I}$ to $\Gr^{(d)(I'_1,\dotsc,I'_{k'})}_{G,I,\Om}|_{(\Div^d_U)^I}$.

\subsection{}\label{ss:repbounds}
It will be useful to index relative position bounds with representations. Write $X^\bullet_+(\wh{T})$ for the set of dominant weights of $\wh{G}$ with respect to $\wh{T}$ and $\wh{B}$. Recall that $\Rep_{\ov\bQ_\ell}(\wh{G}^I)$ is semisimple, and every irreducible object of $\Rep_{\ov\bQ_\ell}(\wh{G}^I)$ can be uniquely written as $\mathlarger{\mathlarger{\boxtimes}}_{i\in I}W_i$, where the $W_i$ are irreducible objects of $\Rep_{\ov\bQ_\ell}\wh{G}$. Now $W_i$ is isomorphic to the Weyl module of a uniquely determined $\om_i$ in $X^\bullet_+(\wh{T})=X_\bullet^+(T)$, so altogether we see that isomorphism classes of objects in $\Rep_{\ov\bQ_\ell}(\wh{G}^I)$ correspond to finite multisets of elements in $X_\bullet^+(T)^I$.

For any $W$ in $\Rep_{\ov\bQ_\ell}((\prescript{L}{}{G})^{d\times I})$, write $\Om(W)$ for the finite subset of $X_\bullet^+(T)^{d\times I}$ underlying the multiset corresponding to $W|_{\wh{G}}$. Then the $\Gal(\wt{Q}/Q)$-action of $\prescript{L}{}{G}$ shows that $\Om$ is $\Ga_Q^{d\times I}$-stable. If $\Om(W)$ is also $\fS_d^I$-stable, write
\begin{align*}
\Gr^{(d)(I_1,\dotsc,I_k)}_{G,I,W}|_{\Div^d_U}\coloneqq\Gr^{(d)(I_1,\dotsc,I_k)}_{G,I,\Om(W)}|_{\Div_U^d}.
\end{align*}
Observe that $\Om(W)$ is always stable under $\fS_d^I$ in the $d=1$ setting.

\subsection{}\label{ss:geometricsatake}
Finally, we recall the \emph{geometric Satake correspondence}. Let $\ze:I\ra J$ be a map of finite sets, and suppose $J_1,\dotsc,J_k$ is an ordered partition of $J$ such that $I_j=\ze^{-1}(J_j)$ for all $1\leq j\leq k$. Now $\ze$ induces morphisms $\ze^*:(\prescript{L}{}{G})^J\ra(\prescript{L}{}{G})^I$ and $\De_\ze:U^J\ra U^I$. We also write $\De_\ze$ for its base change $\Gr_{G,I}^{(I_1,\dotsc,I_k)}|_{U^I}\times_{U^I}U^J\ra\Gr_{G,I}^{(I_1,\dotsc,I_k)}|_{U^I}$. Observe that we may identify $\Gr_{G,I}^{(I_1,\dotsc,I_k)}|_{U^I}\times_{U^I}U^J$ with $\Gr_{G,J}^{(J_1,\dotsc,J_k)}|_{U^J}$.

Write $\sP_{G,I}^{(I_1,\dotsc,I_k)}$ for the category of $G_{\sum_{i\in I}\infty D_i}$-equivariant perverse $\ov\bQ_\ell$-sheaves on $\Gr_{G,I}^{(I_1,\dotsc,I_k)}|_{U^I}$ in the sense of \cite[Sect. A.2]{Gai01}, with degree shifts normalized relative to $U^I$.
\begin{thm*}[{\cite[Theorem 12.16]{Laf16}}\footnote{In \cite{Laf16}, the field $\wt{Q}$ is taken such that $\Gal(\wt{Q}/Q)$ equals the image of $\Ga_Q$ under the $*$-action, and they consider coefficients in a finite extension of $\bQ_\ell$. However, everything works for larger $\wt{Q}$ as well, and extending coefficients to $\ov\bQ_\ell$ is harmless.}] We have a functor
  \begin{align*}
  \Rep_{\ov\bQ_\ell}((\prescript{L}{}G)^I)\ra \sP^{(I_1,\dotsc,I_k)}_{G,I}\mbox{ denoted by }W\mapsto\sS^{(I_1,\dotsc,I_k)}_{I,W}.
  \end{align*}
This functor is fully faithful, and for all $W$ in $\Rep_{\ov\bQ_\ell}((\prescript{L}{}{G})^I)$, it satisfies the following properties:
  \begin{enumerate}[a)]
  \item The perverse sheaf $\sS^{(I_1,\dotsc,I_k)}_{I,W}$ is supported on $\Gr^{(I_1,\dotsc,I_k)}_{G,I,W}|_{U^I}$.
  \item If $I_1,\dotsc,I_k$ refines another ordered partition $I_1',\dotsc,I_{k'}'$ of $I$, we get a natural isomorphism
    \begin{align*}
      (R\pi_{(I_1',\dotsc,I_{k'}')}^{(I_1,\dotsc,I_k)})_!(\sS^{(I_1,\dotsc,I_k)}_{I,W})\ra^\sim\sS^{(I_1',\dotsc,I_{k'}')}_{I,W}.
    \end{align*}
  \item If $W=W_1\boxtimes\dotsb\boxtimes W_k$, where the $W_j$ are objects in $\Rep_{\ov\bQ_\ell}((\prescript{L}{}{G})^{I_j})$, we have a natural isomorphism
    \begin{align*}
      \sS^{(I_1,\dotsc,I_k)}_{I,W}\ra^\sim\ka^*(\sS^{(I_1)}_{I_1,W_1}\boxtimes\dotsb\boxtimes\sS^{(I_k)}_{I_k,W_k}).
    \end{align*}
\item We have a natural isomorphism
  \begin{align*}
    \De_\ze^*(\sS^{(I_1,\dotsc,I_k)}_{I,W})\ra^\sim\sS^{(J_1,\dotsc,J_k)}_{J,W\circ\ze^*}.
  \end{align*}

\item We naturally recover $W$ as the graded derived pushforward
  \begin{align*}
    \bigoplus_{p\in\bZ}(R^p\fp_!\sS^{(I_1,\dotsc,I_k)}_{I,W})(\textstyle\frac{p}2),
  \end{align*}
where $(\frac{p}2)$ denotes the half-integral Tate twist given by our choice of $q^{1/2}$.
  \end{enumerate}
\end{thm*}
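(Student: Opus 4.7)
The plan is to deduce this from the geometric Satake equivalence in its standard (single-leg, single-point) form and then propagate it through fusion and convolution, following Lafforgue's framework. First, I would reduce to the case $I=\{*\}$, $k=1$, and a single closed point $x$ of $U$: here $\Gr^{(*)}_{G,*}|_x$ is the affine Grassmannian of $G_{\cO_x}$ by \ref{ss:affinegrassmannian}, and the theorem of Mirkovi\'c--Vilonen provides a tensor equivalence from $\Rep_E\wh{G}$ to the category of $L^+G_{\cO_x}$-equivariant perverse sheaves on it, with the total cohomology functor serving as the Tannakian fiber functor. Because $G_{Q_x}$ is quasi-split and splits over an unramified extension of $Q_x$, the $*$-action of $\Gal(\wt{Q}/Q)$ on the based root datum lifts to a compatible action on the dual datum and endows equivariant perverse sheaves with a natural descent to $\prescript{L}{}G$-representations, installing the $\prescript{L}{}G$-equivariance required on the source.

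Next, I would globalize the single-leg case to a family over $U$ and then to arbitrary $I$ with $k=1$ by \emph{fusion}. The factorization isomorphism of \ref{ss:grfactorization} expresses $\Gr^{(I)}_{G,I}|_{U^I_\circ}$ as the external product of the pointed Grassmannians, so the external box product of single-leg Satake sheaves exists there; by Beilinson's nearby cycles/fusion construction, it extends uniquely across the diagonal to a perverse sheaf on $\Gr^{(I)}_{G,I}|_{U^I}$, and one checks that the tensor structure of $\Rep_E((\prescript{L}{}G)^I)$ matches the fusion tensor structure. This simultaneously defines $\sS^{(I)}_{I,W}$ and builds in property (d) (restriction along the diagonal $\De_\ze$ corresponds to composition with $\ze^*$) as well as the support bound (a), which follows because the Mirkovi\'c--Vilonen support computation sends the Weyl module of $\om$ to $\Gr_{x,\om}$ and these globalize to the loci $\Gr^{(I)}_{G,I,\ul\om}|_{U^I}$ from \ref{ss:coweightbounds}.

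For the convolution case $k>1$, I would define $\sS^{(I_1,\dotsc,I_k)}_{I,W}$ for $W=W_1\boxtimes\dotsb\boxtimes W_k$ precisely as $\ka^*(\sS^{(I_1)}_{I_1,W_1}\boxtimes\dotsb\boxtimes\sS^{(I_k)}_{I_k,W_k})$, which enforces (c) by fiat and extends by semisimplicity of $\Rep_E((\prescript{L}{}G)^I)$ to all $W$. The key content is then (b): the pushforward along the refinement morphism $\pi^{(I_1,\dotsc,I_k)}_{(I_1',\dotsc,I_{k'}')}$ of a convolution sheaf is again perverse and agrees with the convolution corresponding to the coarser partition; this reduces, via factorization over $U^I_\circ$ and the fact that $\pi$ is proper on Schubert supports, to Lusztig's/Mirkovi\'c--Vilonen's computation that the convolution of $L^+G$-equivariant Satake sheaves is the Satake sheaf of the tensor product. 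Finally, property (e) is recovered from the single-leg, single-point case by factoring $\fp$ through a closed point and invoking the classical Satake fiber functor statement together with smooth base change over $U^I$.

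The hard part will be maintaining $\prescript{L}{}G$-equivariance coherently across all of these steps, since once one leaves a single closed point and works over a family on $U^I$ with multiple legs, the Galois data must be tracked through nearby cycles and convolution simultaneously; this is precisely where the quasi-splitness and unramified-splitting hypotheses fixed in \ref{ss:parahoricgroup} are indispensable. The other delicate point is verifying full faithfulness at the level of the fusion/convolution product rather than only on the irreducible generators; here one uses that both categories are semisimple with matching simple objects and that the functor is monoidal by construction, so Hom-spaces are determined by their tensor decompositions.
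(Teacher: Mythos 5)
This theorem is not proved in the paper: it is cited wholesale as \cite[Theorem 12.16]{Laf16}, and the only original content here is the footnote that one may take $\wt{Q}$ larger than the minimal field through which the $*$-action factors without affecting the argument. There is consequently no proof in this paper to compare your sketch against; the expected move is simply to cite Lafforgue, as the author does, and to verify that the enlarged $\wt{Q}$ causes no issues.

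As an outline of what goes into Lafforgue's theorem, your plan has roughly the right ingredients: pointwise Mirkovi\'c--Vilonen Satake, the $*$-action to pass to $\prescript{L}{}G$-representations (which is where the quasi-splitness and unramified-splitting arrangements of \ref{ss:parahoricgroup} are used), fusion via the factorization structure over $U^I_\circ$ together with nearby cycles, and the convolution case $k>1$ via $\ka$, with (b) as the substantive comparison between convolution and fusion. One genuine caution: perversity of the pushforward in (b) is not a consequence of ``$\pi$ is proper on Schubert supports'' alone. It rests on the decomposition theorem together with the universal local acyclicity of the Satake sheaves over $U^I$, which is precisely what makes the reduction to the factorizable locus legitimate; you should make that input explicit rather than invoking only pointwise semismallness. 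Similarly, your step of defining the functor on external tensor products and extending ``by semisimplicity'' needs to be done compatibly with the $G_{\sum_{i\in I}\infty D_i}$-equivariant structure on the target $\sP^{(I_1,\dotsc,I_k)}_{G,I}$, not just on underlying perverse sheaves; Lafforgue's construction builds this in directly rather than bolting it on afterward.
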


\subsection{}\label{ss:ICsheafreps}
We conclude by explicitly describing the functor from Theorem \ref{ss:geometricsatake} in certain cases. Let $\om$ be in $X_\bullet^+(T)$. Write $W_\om$ for the Weyl module corresponding to $\om$, and write $W_{\Ga_Q\cdot\om}$ for the direct sum $\bigoplus_{\om'}W_{\om'}$, where $\om'$ runs over the $\Ga_Q$-orbit of $\om$. Because the $*$-action of $\Ga_Q$ preserves the based dual group $(\wh{G},\wh{T},\wh{B})$, we see that it naturally endows $W_{\Ga_Q\cdot\om}$ with the structure of a finite-dimensional algebraic representation of $\prescript{L}{}G$ over $\ov\bQ_\ell$. Note that $W_{\Ga_Q\cdot\om}$ depends only on the $\Ga_Q$-orbit of $\om$.

The globalization procedure of \cite[p.~139]{Zhu17} and Theorem \ref{ss:geometricsatake}.e) show that the complex $\sS^{(*)}_{*,W_{\Ga_Q\cdot\om}}$ equals the intersection complex of $\Gr^{(*)}_{G,*,\om}|_U$, with degree shifts normalized relative to $U$. More generally, for $\ul\om$ in $X_\bullet^+(T)^I$, write $W_{\Ga_Q^I\cdot\ul\om}$ for the exterior tensor product $\mathlarger{\mathlarger{\boxtimes}}_{i\in I}W_{\Ga_Q\cdot\om_i}$. We see from Theorem \ref{ss:geometricsatake}.c) that $\sS^{(I_1,\dotsc,I_k)}_{I,W_{\Ga_Q^I\cdot\ul\om}}$ equals the intersection complex of $\Gr^{(I_1,\dotsc,I_k)}_{G,I,\ul\om}|_{U^I}$, with degree shifts normalized relative to $U^I$.

\section{Moduli spaces of shtukas}\label{s:shtukas}
Essentially all of \S\ref{s:bung} and \S\ref{s:grass} holds for any perfect field $k$. In contrast, we have Frobenius morphisms when working over a finite field $k$, and in this section we use these Frobenius morphisms to define symmetrized \emph{moduli spaces of shtukas}. These are equi-characteristic analogues of Shimura varieties and their integral models. However, moduli spaces of shtukas admit richer variants than their number field counterparts: namely, the ability to have multiple \emph{legs}, indexed by the finite set $I$. In the unsymmetrized special case, this phenomenon already plays a crucial role in applications to the Langlands program \cite{Laf16, YZ17}, and it also plays a crucial role in this paper.

We start by defining our symmetrized moduli spaces of shtukas and explaining how they inherit various structures from \S\ref{s:bung} and \S\ref{s:grass}. In the usual, unsymmetrized case, we describe how geometric Satake provides coefficient sheaves on the moduli spaces of shtukas, and we recall Xue's result \cite{Xue20b} that their relative cohomology with compact support over $(U\ssm N)^I$ is \emph{ind-smooth}. Finally, we describe \emph{Hecke correspondences} for our symmetrized moduli spaces of shtukas.

\subsection{}
We begin with notation for relative position stratifications on Hecke stacks. For any finite $\fS_d^I$-stable and $\Ga_Q^{d\times I}$-stable subset of $X_\bullet^+(T)^{d\times I}$, write
\begin{align*}
\Hck^{(d)(I_1,\dotsc,I_k)}_{G,I,\Om}|_{(\Div^d_U)^I}\coloneqq\de^{-1}(\Gr^{(d)(I_1,\dotsc,I_k)}_{G,I,\Om}|_{(\Div^d_U)^I}/G_{\sum_{i\in I}\infty D_i}),
\end{align*}
and write
\begin{align*}
\Hck^{(d)(I_1,\dotsc,I_k)}_{G,N,I,\Om}|_{(\Div^d_{U\ssm N})^I}\subseteq\Hck^{(d)(I_1,\dotsc,I_k)}_{G,N,I}|_{(\Div^d_{U\ssm N})^I}
\end{align*}
for the preimage of $\Hck^{(d)(I_1,\dotsc,I_k)}_{G,I,\Om}|_{(\Div^d_{U\ssm N})^I}$. Note that $\Hck^{(d)(I_1,\dotsc,I_k)}_{G,N,I,\Om}|_{(\Div^d_{U\ssm N})^I}$ is a closed substack of $\Hck^{(d)(I_1,\dotsc,I_k)}_{G,N,I}|_{(\Div^d_{U\ssm N})^I}$.

Because $\Gr^{(d)(I_1,\dotsc,I_k)}_{G,I,\Om}|_{(\Div^d_U)^I}$ is schematic and proper over $(\Div_U^d)^I$, we see that $\Hck^{(d)(I_1,\dotsc,I_k)}_{G,N,I,\Om}|_{(\Div^d_{U\ssm N})^I}$ is schematic and proper over $(\Div^d_{U\ssm N})^I\times\Bun_{G,N}$. For any $W$ in $\Rep_{\ov\bQ_\ell}((\prescript{L}{}{G})^{d\times I})$ with $\Om(W)$ stable under $\fS_d^I$, write
\begin{align*}
\Hck^{(d)(I_1,\dotsc,I_k)}_{G,N,I,W}|_{(\Div^d_{U\ssm N})^I}\coloneqq\Hck^{(d)(I_1,\dotsc,I_k)}_{G,N,I,\Om(W)}|_{(\Div^d_{U\ssm N})^I}.
\end{align*}

\subsection{}\label{ss:shtukadefinition}
We have the following symmetrized version of the \emph{moduli space of shtukas}. For any prestack $\cX$ over $k$, write $\Frob_\cX$ or $\Frob$ for its absolute $q$-Frobenius endomorphism.
\begin{defn*}
Write $\Sht^{(d)(I_1,\dotsc,I_k)}_{G,N,I}$ for the stack over $k$ defined by the Cartesian square
\begin{align*}
\xymatrix{\Sht^{(d)(I_1,\dotsc,I_k)}_{G,N,I}\ar[r]\ar[d] & \Hck^{(d)(I_1,\dotsc,I_k)}_{G,N,I}\ar[d]^-{(p_0,p_k)}\\
\Bun_{G,N}\ar[r]^-{(\id,\Frob)} & \Bun_{G,N}\times\Bun_{G,N}
}
\end{align*}
When $d=1$, we omit it from our notation, and when $N=\varnothing$, we omit it from our notation. For finite closed subschemes $N_1$ and $N_2$ of $X$ such that $N_1\subseteq N_2$, we get a morphism $\Sht^{(d)(I_1,\dotsc,I_k)}_{G,N_2,I}\ra\Sht^{(d)(I_1,\dotsc,I_k)}_{G,N_1,I}$ as in \ref{ss:heckestack}. We also have a morphism $\fp:\Sht^{(d)(I_1,\dotsc,I_k)}_{G,N,I}\ra(\Div_{X\ssm N}^d)^I$ as in \ref{ss:heckestack}. And if $I_1,\dotsc,I_k$ refines another ordered partition $I_1',\dotsc,I_{k'}'$ of $I$, we get a morphism $\pi^{(I_1,\dotsc,I_k)}_{(I_1',\dotsc,I_{k'}')}:\Sht^{(d)(I_1,\dotsc,I_k)}_{G,N,I}\ra\Sht^{(d)(I_1',\dotsc,I_{k'}')}_{G,N,I}$ as in \ref{ss:heckestack}.
\end{defn*}
If we replace $\Hck^{(d)(I_1,\dotsc,I_k)}_{G,N,I}$ in the above square with
\begin{align*}
\Hck^{(d)(I_1,\dotsc,I_k)}_{G,N,I,\Om}|_{(\Div^d_{U\ssm N})^I}\mbox{ or }\Hck^{(d)(I_1,\dotsc,I_k)}_{G,N,I,W}|_{(\Div^d_{U\ssm N})^I},
\end{align*}
then we denote the resulting fiber product using
\begin{align*}
\Sht^{(d)(I_1,\dotsc,I_k)}_{G,N,I,\Om}|_{(\Div^d_{U\ssm N})^I}\mbox{ or }\Sht^{(d)(I_1,\dotsc,I_k)}_{G,N,I,W}|_{(\Div^d_{U\ssm N})^I}\mbox{, respectively.}
\end{align*}
We notate $S$-points of $\Sht^{(d)(I_1,\dotsc,I_k)}_{G,N,I}$ using
\begin{align*}
((D_i)_{i\in I},(\cG_0,\psi_0)\dra^{\phi_1}(\cG_1,\psi_1)\dra^{\phi_2}\dotsb\dra^{\phi_{k-1}}(\cG_{k-1},\psi_{k-1})\dra^{\phi_k}(\prescript\tau{}{\cG}_0,\prescript\tau{}{\psi}_0)),
\end{align*}
where $\prescript\tau{}{}$ denotes the pullback $(\id_X\times\Frob_S)^*$. We refer to this as a \emph{shtuka} over $S$, and we call $(D_i)_{i\in I}$ its \emph{legs}.

\subsection{}\label{ss:congruencecovers}
Let us consider level structure covers for moduli spaces of shtukas. Note that $G(N)$ has a left action on $\Sht^{(d)(I_1,\dotsc,I_k)}_{G,N,I}$ via composition with the $\psi_j$. For finite closed subschemes $N_1$ and $N_2$ of $X$ such that $N_1\subseteq N_2$, the morphism $\Sht^{(d)(I_1,\dotsc,I_k)}_{G,N_2,I}\ra\Sht^{(d)(I_1,\dotsc,I_k)}_{G,N_1,I}$ is equivariant with respect to the homomorphism $G(N_2)\ra G(N_1)$.
\begin{prop*}
  This exhibits the morphism
  \begin{align*}
    \Sht^{(d)(I_1,\dotsc,I_k)}_{G,N,I}\ra\Sht^{(d)(I_1,\dotsc,I_k)}_{G,I}|_{(\Div_{X\ssm N}^d)^I}
  \end{align*}
  as a finite Galois morphism with Galois group $G(N)$. In general, this implies that the morphism
\begin{align*}
\Sht^{(d)(I_1,\dotsc,I_k)}_{G,N_2,I}\ra\Sht^{(d)(I_1,\dotsc,I_k)}_{G,N_1,I}|_{(\Div_{X\ssm N_2}^d)^I}
\end{align*}
is finite Galois with Galois group $\ker(G(N_2)\ra G(N_1))$.
\end{prop*}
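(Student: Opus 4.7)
The plan is to realize the first map $f\coloneqq\Sht^{(d)(I_1,\dotsc,I_k)}_{G,N,I}\to\Sht^{(d)(I_1,\dotsc,I_k)}_{G,I}|_{(\Div^d_{X\ssm N})^I}$ as a $G(N)$-torsor. Since $G(N)$ is a finite (abstract) group, this will immediately yield that $f$ is finite étale Galois with group $G(N)$, and the second statement will then follow by applying the first to $N_1$ and $N_2$. Throughout, I write $\Gamma\coloneqq\R_{N/k}(G_N)$, a smooth affine group scheme over $k$ satisfying $\Gamma(k)=G(N)$.

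My starting point is the observation, essentially contained in \ref{ss:heckestack}, that the forgetful map $\Hck^{(d)(I_1,\dotsc,I_k)}_{G,N,I}\to\Hck^{(d)(I_1,\dotsc,I_k)}_{G,I}|_{(\Div^d_{X\ssm N})^I}$ is a $\Gamma$-bundle. Because the legs $D_i$ avoid $N$, each $\phi_j|_{N\times S}$ is an isomorphism, so the relations $\psi_j\circ\phi_j|_{N\times S}=\psi_{j-1}$ determine $\psi_1,\dotsc,\psi_k$ from $\psi_0$, and $\Gamma$ acts freely on the Hecke data by left translation on $\psi_0$. This action translates $\psi_k$ by the same element, so the induced action on $\Bun_{G,N}\times\Bun_{G,N}$ via $(p_0,p_k)$ is the diagonal one. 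Compatibility with the base change along $(\id,\Frob)\colon\Bun_{G,N}\to\Bun_{G,N}\times\Bun_{G,N}$ in Definition~\ref{ss:shtukadefinition} therefore requires $g=\Frob(g)$, cutting out exactly the constant subgroup scheme $G(N)\subseteq\Gamma$. This recovers the $G(N)$-action from the statement and shows that it is fiberwise free over the target of $f$.

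To check transitivity and nonemptiness of geometric fibers, I would fix a geometric point of $\Sht^{(d)(I_1,\dotsc,I_k)}_{G,I}|_{(\Div^d_{X\ssm N})^I}$, pick an étale-local trivialization $\psi_0^{\mathrm{triv}}$ of $\cG_0|_{N\times S}$, and write any level structure as $\psi_0=g\cdot\psi_0^{\mathrm{triv}}$ for a unique $g\in\Gamma$. Unwinding the shtuka identification $(\cG_k,\psi_k)\cong(\prescript{\tau}{}{\cG}_0,\prescript{\tau}{}{\psi}_0)$ using $\psi_k=\psi_0\circ(\phi_k\cdots\phi_1)^{-1}|_{N\times S}$ then reduces admissibility of $\psi_0$ to a Lang-type equation $g^{-1}\Frob(g)=h$, where $h\in\Gamma$ is fixed by the shtuka data. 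Since $G$ has geometrically connected fibers and $N$ is a finite closed subscheme of $X$, decomposing $N$ into its Artin-local components and analyzing Weil restrictions shows that $\Gamma$ is smooth and connected over $k$; Lang's theorem then implies that $g\mapsto g^{-1}\Frob(g)$ is étale and surjective on $\Gamma$, giving a solution étale-locally and identifying the fiber with a $G(N)$-torsor. This completes the proof that $f$ is a $G(N)$-torsor.

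The main obstacle I anticipate is the connectedness of $\Gamma$ when $N$ has nilpotent structure; this should be handled by writing each Artin-local factor of $N$ as a successive square-zero thickening, so that the corresponding Weil restriction is a successive extension of a Weil restriction along a separable residue field extension by smooth connected unipotent groups. For the second statement, applying the first part to both $N_1$ and $N_2$ gives finite étale torsor maps from $\Sht_{G,N_1,I}|_{(\Div^d_{X\ssm N_2})^I}$ and $\Sht_{G,N_2,I}$ to the common base $\Sht_{G,I}|_{(\Div^d_{X\ssm N_2})^I}$; the morphism in question realizes the $G(N_2)$-torsor as an intermediate cover of the $G(N_1)$-torsor, and a standard torsor argument identifies it as a torsor for $\ker(G(N_2)\to G(N_1))$.
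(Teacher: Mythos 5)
Your proposal is correct and uses essentially the same idea as the paper's proof: unfolding the shtuka structure over $N\times S$ shows that a level structure compatible with the Frobenius twist amounts to solving a Lang-type equation $g^{-1}\Frob(g)=h$ in the geometrically connected smooth affine group $\R_{N/k}(G_N)$, and Lang's theorem then identifies the fibers as $G(N)$-torsors. The paper packages this slightly differently---it introduces the discrete stack $\cB$ of pairs $(\cG,\phi\colon\cG\ra^\sim\prescript{\tau}{}{\cG})$ of $G_N$-bundles on $N\times S$ with a Frobenius-twist isomorphism, identifies $\cB$ with $*/G(N)$ via Lang's lemma in the form of \cite[Lemma 3.3 b)]{Var04}, and then exhibits a Cartesian square over $\Spec k\ra\cB$---but that Cartesian square is just the global version of your pointwise computation, and both deduce the second statement from the first via equivariance of the restriction map and the surjectivity of $G(N_2)\ra G(N_1)$.
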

By pulling back, we obtain a similar statement for $\Sht^{(d)(I_1,\dotsc,I_k)}_{G,N,I,W}|_{(\Div_{U\ssm N}^d)^I}$.
\begin{proof}
The equivariance of $\Sht^{(d)(I_1,\dotsc,I_k)}_{G,N_2,I}\ra\Sht^{(d)(I_1,\dotsc,I_k)}_{G,N_1,I}$ shows that the first statement implies the second. As for the first statement, write $\cB$ for the prestack over $k$ whose $S$-points parametrize a $G_N\times S$-bundle $\cG$ on $N\times S$ along with an isomorphism $\phi:\cG\ra^\sim\prescript\tau{}\cG$ of $G_N\times S$-bundles. Since $N$ is finite over $k$, \cite[lemma 3.3]{Bre19} shows that $G_N\times S$-bundles on $N\times S$ are equivalent to $\R_{N/k}(G_N)_S$-bundles on $S$. Because $G_N$ has geometrically connected fibers, we see that $\R_{N/k}(G_N)$ does as well, so applying \cite[Lemma 3.3 b)]{Var04} to the classifying stack $*/\R_{N/k}(G_N)$ shows that $\cB$ is naturally isomorphic to the discrete stack $(*/\R_{N/k}(G_N))(k)$, which is $*/G(N)$ by Lang's lemma.

Consider the morphism $\Sht^{(d)(I_1,\dotsc,I_k)}_{G,I}|_{(\Div_{X\ssm N}^d)^I}\ra\cB$ given by
\begin{align*}
((D_i)_{i\in I},\cG_0\dra^{\phi_1}\cG_1\dra^{\phi_2}\dotsb\dra^{\phi_{k-1}}\cG_{k-1}\dra^{\phi_k}\prescript\tau{}{\cG}_0)\mapsto(\cG_0|_{N\times S},(\phi_k\dotsb\circ\phi_1)|_{N\times S}).
\end{align*}
Because the $D_i$ are disjoint from $N\times S$, we see that the square
\begin{align*}
\xymatrix{\Sht^{(d)(I_1,\dotsc,I_k)}_{G,N,I}\ar[r]\ar[d] & \Sht^{(d)(I_1,\dotsc,I_k)}_{G,I}|_{(\Div_{X\ssm N}^d)^I}\ar[d]\\
\Spec{k}\ar[r]^-{G(N)} & \cB}
\end{align*}
is Cartesian. As the bottom arrow is finite Galois with Galois group $G(N)$, the top arrow is as well.
\end{proof}

\subsection{}\label{ss:shtukaconvolution}
Convolution morphisms between moduli spaces of shtukas inherit the following properties from their Beilinson--Drinfeld affine Grassmannian counterparts. Write $\ga:\Sht^{(d)(I_1,\dotsc,I_k)}_{G,N,I}\ra\Hck^{(d)(I_1,\dotsc,I_k)}_{G,N,I}$ for the projection morphism. If $I_1,\dotsc,I_k$ refines another ordered partition $I'_1,\dotsc,I'_{k'}$ of $I$, we see that the square
\begin{align*}
\xymatrix{\Sht^{(d)(I_1,\dotsc,I_k)}_{G,I,W}|_{(\Div_U^d)^I}\ar[r]^-{\de\circ\ga}\ar[d]^-{\pi^{(I_1,\dotsc,I_k)}_{(I_1',\dotsc,I_{k'}')}} & \Gr^{(d)(I_1,\dotsc,I_k)}_{G,I,W}|_{(\Div_U^d)^I}/G_{\sum_{i\in I}\infty D_i}\ar[d]^-{\pi^{(I_1,\dotsc,I_k)}_{(I_1',\dotsc,I_{k'}')}}\\
\Sht^{(d)(I'_1,\dotsc,I'_{k'})}_{G,I,W}|_{(\Div_U^d)^I}\ar[r]^-{\de\circ\ga} & \Gr^{(d)(I'_1,\dotsc,I'_{k'})}_{G,I,W}|_{(\Div_U^d)^I}/G_{\sum_{i\in I}\infty D_i}
}
\end{align*}
is Cartesian. Now \ref{ss:symmetrizedbounds} shows that the right arrow is schematic and proper, so the left arrow is as well. In general, we have a Cartesian square
\begin{align*}
\xymatrix{\Sht^{(d)(I_1,\dotsc,I_k)}_{G,N,I,W}|_{(\Div_{U\ssm N}^d)^I}\ar[r]\ar[d]^-{\pi^{(I_1,\dotsc,I_k)}_{(I_1',\dotsc,I_{k'}')}} & \Sht^{(d)(I_1,\dotsc,I_k)}_{G,I,W}|_{(\Div_{U\ssm N}^d)^I}\ar[d]^-{\pi^{(I_1,\dotsc,I_k)}_{(I_1',\dotsc,I_{k'}')}}\\
\Sht^{(d)(I'_1,\dotsc,I'_{k'})}_{G,N,I,W}|_{(\Div_{U\ssm N}^d)^I}\ar[r] & \Sht^{(d)(I'_1,\dotsc,I'_{k'})}_{G,I,W}|_{(\Div_{U\ssm N}^d)^I}},
\end{align*}
which implies that the left arrow here is schematic and proper as well.

\subsection{}\label{ss:delignemumford}
Moduli spaces of shtukas have the following basic geometric structure. 
\begin{prop*}
The stack $\Sht^{(d)(I_1,\dotsc,I_k)}_{G,N,I,W}|_{(\Div^{d}_{U\ssm N})^I}$ is a Deligne--Mumford stack locally of finite type over $(\Div^d_{U\ssm N})^I$.
\end{prop*}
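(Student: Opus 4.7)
The plan is to reduce to the unsymmetrized case via the finite surjective morphism $\al$ of \ref{ss:unfoldsymmetricaction}, and then to combine the finite-type bounds from \ref{ss:symmetrizedbounds} and \ref{ss:shtukaconvolution} with the standard Lang--Steinberg trick for automorphisms of shtukas.

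First I would form the Cartesian square
\begin{align*}
\xymatrix{\Sht^{(d\times I_1,\dotsc,d\times I_k),\leq\mu}_{G,N,d\times I,W}|_{(U\ssm N)^{d\times I}}\ar[r]^-\al\ar[d]^-\fp & \Sht^{(d)(I_1,\dotsc,I_k),\leq\mu}_{G,N,I,W}|_{(\Div^d_{U\ssm N})^I}\ar[d]^-\fp\\
(U\ssm N)^{d\times I}\ar[r]^-\al & (\Div^d_{U\ssm N})^I
}
\end{align*}
by base change from \ref{ss:unfoldsymmetricaction}. Over $(\Div^{d,\circ}_{U\ssm N})^I$, the map $X^{d\times I}\to(\Div^d_X)^I$ from \ref{ss:div} is a finite \'etale $\fS_d^I$-torsor, so the top arrow becomes one as well. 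Since a finite \'etale group quotient of a Deligne--Mumford stack of finite type remains Deligne--Mumford and of finite type, it suffices to prove the unsymmetrized statement: that $\Sht^{(d\times I_1,\dotsc,d\times I_k),\leq\mu}_{G,N,d\times I,W}|_{(U\ssm N)^{d\times I}_\circ}/\Xi$ is Deligne--Mumford and of finite type, where $(U\ssm N)^{d\times I}_\circ$ denotes the preimage of $(\Div^{d,\circ}_{U\ssm N})^I$.

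Next I would apply the Varshavsky--Lafforgue argument using the defining Cartesian square for $\Sht$ in \ref{ss:shtukadefinition}. The projectivity of $\Gr^{(d\times I_1,\dotsc,d\times I_k)}_{G,d\times I,W}$ over $(U\ssm N)^{d\times I}$ from \ref{ss:symmetrizedbounds}, together with the Hecke-stack presentation via the Beilinson--Drinfeld Grassmannian, implies that the morphism $(p_0,p_k):\Hck^{(d\times I_1,\dotsc,d\times I_k),\leq\mu}_{G,N,d\times I,W}|_{(U\ssm N)^{d\times I}}\to\Bun^{\leq\mu}_{G,N}\times\Bun^{\leq\mu'}_{G,N}$ is schematic and of finite type, where $\mu'$ is $\mu$ shifted by the coweight bound extracted from $W$. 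Because Frobenius preserves Harder--Narasimhan polygons, the morphism $(\id,\Frob)|_{\Bun^{\leq\mu}_{G,N}}$ factors through $\Bun^{\leq\mu}_{G,N}\times\Bun^{\leq\mu}_{G,N}\subseteq\Bun^{\leq\mu}_{G,N}\times\Bun^{\leq\mu'}_{G,N}$; pulling back then exhibits $\Sht^{(d\times I_1,\dotsc,d\times I_k),\leq\mu}_{G,N,d\times I,W}|_{(U\ssm N)^{d\times I}}$ as schematic of finite type over $\Bun^{\leq\mu}_{G,N}$, which is itself of finite type over $k$.

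To deduce the Deligne--Mumford and finite-type properties after quotienting by $\Xi$, I would invoke the Lang--Steinberg trick: an automorphism of a shtuka is a Frobenius-fixed automorphism of the underlying $G$-bundle, and because $\Aut(\cG_0)$ is an affine algebraic group of finite type, its Frobenius-fixed points form a finite group. Combined with the cocompactness of $\Xi$ in $Z(Q)\bs Z(\bA_Q)$, which cuts the quotient down to finite type, this yields the desired result. I expect the main obstacle to be verifying the Deligne--Mumford property cleanly, since $\Bun^{\leq\mu}_{G,N}/\Xi$ itself is not Deligne--Mumford; the passage to $\Sht$ is essential because only after imposing the shtuka condition do automorphism groups become finite.
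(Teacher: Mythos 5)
Your first step — reducing to the unsymmetrized case via the finite \'etale $\fS_d^I$-torsor structure of $\al$ over $(\Div^{d,\circ}_{U\ssm N})^I$ — is a clean alternative to the paper's more direct handling of the symmetrized objects, and is valid. The trouble is in the second step.

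The assertion that $\Bun^{\leq\mu}_{G,N}$ is ``itself of finite type over $k$'' is false for general $G$. The Harder--Narasimhan truncation $\Bun^{\leq\mu}_G$ of \ref{ss:slopebounds} is defined by pulling back from $\Bun_{G^{\ad}}$, so it imposes no control whatsoever on the central direction: $\Bun^{\leq\mu}_G$ typically has infinitely many connected components (think of $\Bun_{\GL_1}$, where every degree occurs). It is precisely the quotient by the cocompact lattice $\Xi$ that cuts the central contribution down to a finite-type object. In the paper this only becomes a scheme of finite type in the case $G=G^{\ad}$, where the center is trivial, $\Xi$ is trivial, and $\rho_*:\Bun_{G^{\ad}}\to\Bun_{\GL_r}$ is quasi-affine — and even then only after taking $\deg N$ sufficiently large so that $\Bun^{\leq\mu,0}_{\GL_r,N}$ is a quasi-projective scheme (Example \ref{exmp:GLrhardernarasimhan}), a reduction you omit entirely. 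Your proposal is in fact internally inconsistent: if $\Bun^{\leq\mu}_{G,N}$ were finite type, then $\Sht^{\leq\mu}_{G,N,d\times I,W}$ would be finite type before quotienting by $\Xi$, and the final paragraph invoking cocompactness of $\Xi$ would be superfluous.

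The closing paragraph gestures at the right issues (finite shtuka automorphism groups for Deligne--Mumford, cocompactness of $\Xi$ for finite type) but does not supply an argument; as stated, the ``Lang--Steinberg trick'' shows that automorphism groups of $k$-points are finite, which is not by itself the Deligne--Mumford condition, and ``cuts the quotient down to finite type'' is left unexplained. The paper's actual proof is a genuine case analysis: first $G_Q$ adjoint (where $\Xi$ is trivial and everything is a scheme), then $G_Q$ a torus (where one pulls back to $\wh{U}$, splits the torus, invokes Lang's lemma to make the shtuka space a union of $\Bun_{G,N}(k)$-torsors, and then uses that the image of $\Xi$ in the discrete group $\Bun_{G,N}(k)$ has finite index), and finally general $G$ via a finite morphism to a closed substack of $\Sht_{G^{\ad}\times G^{\ab}}$ induced by the central isogeny $G\to G^{\ad}\times G^{\ab}$ (where one must also choose $W^{\ad}$, $W^{\ab}$ dominating $W$). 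None of this structure appears in your proposal, and without it the claim does not go through.
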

\begin{proof}
  Since the $\pi^{(I_1,\dotsc,I_k)}_{(I'_1,\dotsc,I'_{k'})}$ from \ref{ss:shtukaconvolution} are of finite type, it suffices to consider $k=1$. Furthermore, Proposition \ref{ss:congruencecovers} indicates that it suffices to consider $N=\varnothing$.

  Because $\Bun_G$ is algebraic, we see that $(\id,\Frob):\Bun_G\ra \Bun_G\times\Bun_G$ is of finite type. Hence its base change $\Sht^{(d)(I)}_{G,I,W}|_{(\Div^d_U)^I}\ra\Hck^{(d)(I)}_{G,I,W}|_{(\Div^d_U)^I}$ is as well. Now $\Hck^{(d)(I)}_{G,I,W}|_{(\Div^d_U)^I}$ is schematic and proper over $(\Div^d_U)^I\times\Bun_G$, which itself is an algebraic stack locally of finite type over $(\Div^d_U)^I$, so altogether $\Sht^{(d)(I)}_{G,I,W}|_{(\Div^d_U)^I}$ is also an algebraic stack locally of finite type over $(\Div_U^d)^I$.

To see that $\Sht^{(d)(I)}_{G,I,W}|_{(\Div^d_U)^I}$ is Deligne--Mumford, it suffices to check that its relative diagonal morphism over $(\Div_U^d)^I$ is unramified. As $\Sht^{(d)(I)}_{G,I,W}|_{(\Div^d_U)^I}$ is algebraic, this relative diagonal is already of finite type, so we just need to show that it is formally unramified. The latter follows from the argument on \cite[p.~26--27]{AH13}.
\end{proof}

\subsection{}\label{ss:eps}
Now, we describe our coefficient sheaves in the usual, unsymmetrized case. Write $\eps$ for the composite morphism
\begin{align*}
\xymatrix{
\Sht^{(d)(I_1,\dotsc,I_k)}_{G,N,I}\ar[r] & \Sht^{(d)(I_1,\dotsc,I_k)}_{G,I}\ar[r]^-{\de\circ\ga} & \Gr^{(d)(I_1,\dotsc,I_k)}_{G,I}/G_{\sum_{i\in I}\infty D_i}.
}
\end{align*}
In the $d=1$ setting, write $\sF^{(I_1,\dotsc,I_k)}_{N,I,W}$ for the pullback $\eps^*(\sS^{(I_1,\dotsc,I_k)}_{I,W})$, which Theorem \ref{ss:geometricsatake}.a) enables us to view as a constructible complex of $\ov\bQ_\ell$-sheaves on $\Sht^{(I_1,\dotsc,I_k)}_{G,N,I,W}|_{(U\ssm N)^I}$. Applying proper base change to \ref{ss:shtukaconvolution} and Theorem \ref{ss:geometricsatake}.b) shows that $R\fp_!(\sF^{(I_1,\dotsc,I_k)}_{N,I,W})$ is independent up to isomorphism of the ordered partition $I_1,\dotsc,I_k$, so we denote this ind-(constructible complex of $\ov\bQ_\ell$-sheaves) on $(U\ssm N)^I$ by $\sH_{N,I,W}$. For any integer $p$, write $\sH^{p}_{N,I,W}$ for its $p$-th cohomology, which is an ind-constructible $\ov\bQ_\ell$-sheaf on $(U\ssm N)^I$.

\subsection{}\label{ss:cohomologyissmooth}
To state Xue's result, we recall the definition of \emph{ind-smoothness}. Briefly, let $X$ be any normal connected noetherian scheme over $k$. Recall that we say an ind-constructible $\ov\bQ_\ell$-sheaf $\sM$ on $X$ is \emph{ind-smooth} if $\sM$ is isomorphic to a directed colimit of smooth $\ov\bQ_\ell$-sheaves on $X$. This is equivalent to requiring that, for any geometric points $\ov{x}$ and $\ov{y}$ of $X$ and \'etale path $\ov{y}\rightsquigarrow\ov{x}$, the resulting specialization map $\sM_{\ov{x}}\ra\sM_{\ov{y}}$ is an isomorphism \cite[Lemma 1.1.5]{Xue20b}.

\begin{thm*}[{\cite[Theorem 6.0.12]{Xue20b}}\footnote{Now \cite{Laf16} and \cite{Xue20b} consider relative intersection cohomology of $\Sht^{(I_1,\dotsc,I_k)}_{G,N,I,W}|_{(U\ssm N)^I}\!/\Xi$ instead of $\Sht^{(I_1,\dotsc,I_k)}_{G,N,I,W}|_{(U\ssm N)^I}$, where $\Xi$ is a discrete subgroup of $Z(Q)\bs Z(\bA_Q)$ such that $Z(Q)\bs Z(\bA_Q)/\Xi$ is compact, $Z$ is the center of $G_Q$, and the action of $Z(\bA_Q)$ is given via twisting. However, the key ingredients \cite[Lemma 6.0.6]{Xue20b} and \cite[Lemma 6.0.7]{Xue20b} are proven via geometry on $\Sht^{(I_1,\dotsc,I_k)}_{G,N,I,W}|_{(U\ssm N)^I}$, so they and hence \cite[Theorem 6.0.12]{Xue20b} continue to hold for the latter's relative intersection cohomology. Indeed, this version of \cite[Theorem 6.0.12]{Xue20b} is already crucially used in \cite[Theorem 3.2.3]{AGKRRV21}.}] 
Assume that $X$ is geometrically connected over $k$. Then the ind-constructible $\ov\bQ_\ell$-sheaf $\sH^{p}_{N,I,W}$ on $(U\ssm N)^I$ is ind-smooth.
\end{thm*}
In particular, for any geometric points $\ov{x}$ and $\ov{y}$ of $(U\ssm N)^I$ and \'etale path $\ov{y}\rightsquigarrow\ov{x}$, the specialization morphism $\sH_{N,I,W,\ov{x}}\ra\sH_{N,I,W,\ov{y}}$ in the derived category is an isomorphism.
\begin{rem*}
Even without the geometrically connected assumption on $X$, we expect some form of Theorem \ref{ss:cohomologyissmooth} to hold.
\end{rem*}

\subsection{}\label{ss:shtukasymmetricaction}
Our symmetrized objects are related to the unsymmetrized special case as follows. Recall the Cartesian squares from \ref{ss:unfoldsymmetricaction}. By pulling back along $\ga$, we get analogous Cartesian squares
\begin{align*}
\xymatrix{
\Sht^{(d)(I_1,\dotsc,I_k)}_{G,N,I}\ar[d]^-{\pi^{(I_1,\dotsc,I_k)}_{(I'_1,\dotsc,I'_{k'})}} & \Sht^{(d\times I_1,\dotsc,d\times I_k)}_{G,N,d\times I}\ar[l]_-\al\ar[d]^-{\pi^{(I_1,\dotsc,I_k)}_{(I'_1,\dotsc,I'_{k'})}}\\
\Sht^{(d)(I_1',\dotsc,I'_{k'})}_{G,N,I}\ar[d]^-\fp & \Sht^{(d\times I_1',\dotsc,d\times I'_{k'})}_{G,N,d\times I}\ar[l]_-\al\ar[d]^-\fp \\
(\Div_{X\ssm N}^d)^I & (X\ssm N)^{d\times I}\ar[l]_-\al
}
\end{align*}
for any ordered partition $I'_1,\dotsc,I'_{k'}$ of $I$ refined by $I_1,\dotsc,I_k$.

Let $\Om$ be a finite $\fS_d^I$-stable subset of $X_\bullet^+(T)^{d\times I}$. We see from \ref{ss:symmetrizedbounds} that the above diagram restricts to commutative squares
\begin{align*}
\xymatrix{
\Sht^{(d)(I_1,\dotsc,I_k)}_{G,N,I,\Om}|_{(\Div_{U\ssm N}^d)^I}\ar[d]^-{\pi^{(I_1,\dotsc,I_k)}_{(I'_1,\dotsc,I'_{k'})}} & \displaystyle\Sht^{(d\times I_1,\dotsc,d\times I_k)}_{G,N,d\times I,\Om}|_{(U\ssm N)^{d\times I}}\ar[l]_-\al\ar[d]^-{\pi^{(I_1,\dotsc,I_k)}_{(I'_1,\dotsc,I'_{k'})}}\\
\Sht^{(d)(I'_1,\dotsc,I'_{k'})}_{G,N,I,\Om}|_{(\Div_{U\ssm N}^d)^I}\ar[d]^-\fp & \displaystyle\Sht^{(d\times I'_1,\dotsc,d\times I'_{k'})}_{G,N,d\times I,\Om}|_{(U\ssm N)^{d\times I}}\ar[l]_-\al\ar[d]^-\fp \\
(\Div_{U\ssm N}^d)^I & (U\ssm N)^{d\times I}\ar[l]_-\al
}
\end{align*}
that are Cartesian up to universal homeomorphism. As in \ref{ss:unfoldsymmetricaction}, we see that $\fS_d^I$ has a right action on the right-hand sides for which the $\al$ are invariant and the right arrows are equivariant.

\subsection{}\label{ss:adelicaction}
We conclude by describing \emph{Hecke correspondences} in our setup. First, we define the adelic action at infinite level. Write $\bA_Q$ for the adele ring of $Q$, and write $\bO_Q$ for the integral subring of $\bA_Q$. Write $\eta_{(d)I}$ for the inverse limit $\varprojlim_N(\Div_{X\ssm N}^d)^I$,\footnote{When $X$ is not geometrically connected, $(\Div^d_X)^I$ is not integral. Even when $X$ is geometrically connected, $\eta_{(d)I}$ is not the generic point of $(\Div^d_X)^I$ when $d\geq2$ or $\#I\geq2$.} and write $\Sht^{(d)(I_1,\dotsc,I_k)}_{G,\infty,I}$ for the inverse limit
\begin{align*}
\varprojlim_N\Sht^{(d)(I_1,\dotsc,I_k)}_{G,N,I}|_{\eta_{(d)I}},
\end{align*}
where $N$ runs through finite closed subschemes of $X$. Write $\Sht^{(d)(I_1,\dotsc,I_k)}_{G,\infty,I,W}$ for the analogous inverse limit. By Proposition \ref{ss:congruencecovers}, we see that $\Sht^{(d)(I_1,\dotsc,I_k)}_{G,\infty,I}$ is a pro-Galois cover of $\Sht^{(d)(I_1,\dotsc,I_k)}_{G,I}|_{\eta_{(d)I}}$ with Galois group $G(\bO_Q)$.

We extend the left $G(\bO_Q)$-action to a left $G(\bA_Q)$-action as follows. Note that the $S$-points of $\Sht^{(d)(I_1,\dotsc,I_k)}_{G,\infty,I}$ parametrize data consisting of 
\begin{enumerate}[i)]
\item for all $i$ in $I$, a point $D_i$ of $\eta_{(d)I}(S)$, 
\item for all $0\leq j\leq k-1$, a $G\times S$-bundle $\cG_j$ on $X\times S$ and an isomorphism
  \begin{align*}
    \psi_j:\cG_j|_{\coprod_x(X\times S)^\wedge_{x\times S}}\ra^\sim G\times\textstyle\coprod_x(X\times S)^\wedge_{x\times S}
  \end{align*}
 of $G\times\textstyle\coprod_x(X\times S)^\wedge_{x\times S}$-bundles, where $x$ runs over closed points of $X$,
\item for all $1\leq j\leq k$, an isomorphism
  \begin{align*}
    \phi_j:\cG_{j-1}|_{X\times S\ssm\sum_{i\in I_j}D_i}\ra^\sim\cG_j|_{X\times S\ssm\sum_{i\in I_j}D_i}
  \end{align*}
  such that $\psi_j\circ\phi|_{\coprod_x(X\times S)^\wedge_{x\times S}}=\psi_{j-1}$, where we set $(\cG_k,\psi_k)=(\prescript\tau{}\cG_0,\prescript\tau{}\psi_0)$.
\end{enumerate}
Let $g=(g_x)_x$ be an element of $G(\bA_Q)$. For every closed point $x$ of $X$, we get an isomorphism
\begin{align*}
g_x:G\times((X\times S)_{x\times S}^\wedge\ssm x\times S)\ra^\sim G\times((X\times S)_{x\times S}^\wedge\ssm x\times S),
\end{align*}
and for the cofinitely many $x$ such that $g_x$ lies in $G(\cO_x)$, this extends to an isomorphism over $(X\times S)^\wedge_{x\times S}$. For the finitely many other $x$, the Beauville--Laszlo theorem enables us to use $g_x\circ\psi_j|_{(X\times S)^\wedge_{x\times S}\ssm x\times S}$ to glue $\cG_j|_{(X\ssm x)\times S}$ with the trivial bundle on $(X\times S)_{x\times S}^\wedge$.

Apply this to each of the finitely many other $x$. This yields a $G\times S$-bundle $g\cdot\cG_j$ on $X\times S$ along with an isomorphism $g\cdot\psi_j$ as in ii) such that, for every closed point $x$ of $X$, we have a commutative square
\begin{align*}
\xymatrix{
\cG_j|_{(X\times S)^\wedge_{x\times S}\ssm x\times S}\ar[r]^-\sim\ar[d]^-{\psi_j|_{(X\times S)_{x\times S}^\wedge\ssm x\times S}} & (g\cdot\cG_j)|_{(X\times S)^\wedge_{x\times S}\ssm x\times S}\ar[d]^-{(g\cdot\psi_j)|_{(X\times S)_{x\times S}^\wedge\ssm x\times S}}\\
G\times((X\times S)_{x\times S}^\wedge\ssm x\times S)\ar[r]^-{g_x} & G\times((X\times S)_{x\times S}^\wedge\ssm x\times S).
}
\end{align*}
This procedure is compatible with iii), and it yields a $G(\bA_Q)$-action on $\Sht^{(d)(I_1,\dotsc,I_k)}_{G,\infty,I}$. Since the above square extends to isomorphisms over $(X\times S)^\wedge_{x\times S}$ when $g_x$ lies in $G(\cO_x)$, we see that this indeed extends the $G(\bO_Q)$-action. Finally, compatibility with iii) indicates that the $G(\bA_Q)$-action preserves $\Sht^{(d)(I_1,\dotsc,I_k)}_{G,\infty,I,W}$. 

\subsection{}\label{ss:adeliccompatible}
The adelic action satisfies the following compatibilities. If $I_1,\dotsc,I_k$ refines another ordered partition $I_1',\dotsc,I'_{k'}$ of $I$, the morphisms $\pi^{(I_1,\dotsc,I_k)}_{(I'_1,\dotsc,I'_{k'})}$ pass to the inverse limit in \ref{ss:adelicaction} and yield a morphism
\begin{align*}
\pi^{(I_1,\dotsc,I_k)}_{(I'_1,\dotsc,I'_{k'})}:\Sht^{(d)(I_1,\dotsc,I_k)}_{G,\infty,I}\ra\Sht^{(d)(I'_1,\dotsc,I'_{k'})}_{G,\infty,I}.
\end{align*}
This commutes with the $G(\bA_Q)$-action, and we see that it also restricts to a morphism
\begin{align*}
\pi^{(I_1,\dotsc,I_k)}_{(I'_1,\dotsc,I'_{k'})}:\Sht^{(d)(I_1,\dotsc,I_k)}_{G,\infty,I,W}\ra\Sht^{(d)(I'_1,\dotsc,I'_{k'})}_{G,\infty,I,W}.
\end{align*}
Similarly, the morphisms $\al$ pass to the inverse limit in \ref{ss:adelicaction} and yield a morphism
\begin{align*}
\al:\Sht^{(d\times I_1,\dotsc,d\times I_k)}_{G,\infty,I}\ra\Sht^{(d)(I_1,\dotsc,I_k)}_{G,\infty,I}.
\end{align*}
This commutes with the $G(\bA_Q)$-action, and we see that it also restricts to a morphism
\begin{align*}
\al:\Sht^{(d\times I_1,\dotsc,d\times I_k)}_{G,\infty,I,W}\ra\Sht^{(d)(I_1,\dotsc,I_k)}_{G,\infty,I,W}.
\end{align*}
Note that the right $\fS_d^I$-action on $\Sht^{(d\times I_1,\dotsc,d\times I_k)}_{G,\infty,I}$ and hence $\Sht^{(d\times I_1,\dotsc,d\times I_k)}_{G,\infty,I,W}$ commutes with the $G(\bA_Q)$-action.

\subsection{}\label{ss:heckecorrespondence}
From here, we obtain an action of the Hecke algebra by correspondences as follows. Write $K_{G,N}$ for the subgroup $\ker(G(\bO_Q)\ra G(N))$ of $G(\bO_Q)$, and write $\fH_{G,N}$ for the ring of finitely-supported $\ov\bQ_\ell$-valued functions on $K_{G,N}\bs G(\bA_Q)/K_{G,N}$, where multiplication is given by convolution with respect to the Haar measure on $G(\bA_Q)$ for which $K_{G,N}$ has measure $1$. For any closed point $x$ of $X$, write $\fH_{G,x}$ for the analogous ring of finitely-supported $\ov\bQ_\ell$-valued functions on $G(\cO_x)\bs G(Q_x)/G(\cO_x)$. Recall that $\fH_{G,N}$ contains the restricted tensor product $\bigotimes_u'\fH_{G,u}$, where $u$ runs over closed points of $X\ssm N$.

Let $g$ be in $G(\bA_Q)$. The $G(\bA_Q)$-action from \ref{ss:adelicaction} yields a finite \'etale correspondence
\begin{align*}
\xymatrix{\Sht^{(d)(I_1,\dotsc,I_k)}_{G,\infty,I}/(K_{G,N}\cap g^{-1}K_{G,N}g)\ar[r]^-g_-\sim\ar[d] & \Sht^{(d)(I_1,\dotsc,I_k)}_{G,\infty,I}/(gK_{G,N}g^{-1}\cap K_{G,N})\ar[d]\\
\Sht^{(d)(I_1,\dotsc,I_k)}_{G,N,I}|_{\eta_{(d)I}} & \Sht^{(d)(I_1,\dotsc,I_k)}_{G,N,I}|_{\eta_{(d)I}},
}
\end{align*}
using the fact that $\Sht^{(d)(I_1,\dotsc,I_k)}_{G,\infty,I}/K_{G,N} = \Sht^{(d)(I_1,\dotsc,I_k)}_{G,N,I}|_{\eta_{(d)I}}$. By sending the indicator function of $K_{G,N}gK_{G,N}$ to the above correspondence, we obtain a ring homomorphism from $\fH_{G,H}$ to the ring of $\ov\bQ_\ell$-valued finite \'etale correspondences on $\Sht^{(d)(I_1,\dotsc,I_k)}_{G,N,I}|_{\eta_{(d)I}}$ over $\eta_{(d)I}$. We similarly obtain finite \'etale correspondences on $\Sht^{(d)(I_1,\dotsc,I_k)}_{G,N,I,W}|_{\eta_{(d)I}}$.

We see from \ref{ss:adeliccompatible} that our correspondences are compatible with $\pi^{(I_1,\dotsc,I_k)}_{(I'_1,\dotsc,I'_{k'})}$ and $\al$. In the $d=1$ setting, proper base change shows that they induce an action of $\fH_{G,N}$ on $\sH_{N,I,W}|_{\eta_{(d)I}}$.

\begin{rem}\label{rem:extension}
Let $N(g)$ be a finite set of closed points $x$ of $X$ containing those for which $g$ does not lie in $G(\cO_x)$. Note that the construction in \ref{ss:adelicaction} more generally yields a left $\big(\prod_{x\notin N(g)}G(\cO_x)\big)\times\big(\prod_{x\in N(g)}G(Q_x)\big)$-action on
\begin{align*}
\varprojlim_N\Sht^{(d)(I_1,\dotsc,I_k)}_{G,N,I}|_{(\Div^d_{X\ssm N(g)})^I},
\end{align*}
where $N$ runs through finite closed subschemes of $X$ supported on $N(g)$, that extends the left $G(\bO_Q)$-action. Therefore the construction in \ref{ss:heckecorrespondence} naturally extends to a finite \'etale correspondence on $\Sht^{(d)(I_1,\dotsc,I_k)}_{G,N,I}|_{(\Div^d_{X\ssm(N\cup N(g))})^I}$, which restricts to one on $\Sht^{(d)(I_1,\dotsc,I_k)}_{G,N,I,W}|_{(\Div^d_{U\ssm(N\cup N(g))})^I}$. 
\end{rem}

\section{Partial Frobenii and derived categories}\label{s:frob}
In positive characteristic algebraic geometry, forming fundamental groups rarely commutes with taking products---even over an algebraically closed field. One can remedy this by asking for additional structure: namely, \emph{partial Frobenius morphisms}. We crucially need to carry out this strategy in the derived category, which forces us to use $\infty$-categorical structures.

First, we define partial Frobenii and describe their action on derived categories of $\ov\bQ_\ell$-sheaves. Next, we explain a monodromy interpretation of partial Frobenii, which generalizes the relation between Weil sheaves and Weil groups. We then describe how partial Frobenii arise in the setting of symmetrized moduli spaces of shtukas. In the usual, unsymmetrized case, we state an anticipated result of Arinkin--Gaitsgory--Kazhdan--Raskin--Rozenblyum--Varshavsky showing that their relative cohomology complex satisfies a derived version of \emph{Drinfeld's lemma}. Finally, we use work of Xue to relate the action of Frobenius elements in the Weil group with that of partial Frobenii on our cohomology groups.

\subsection{}\label{ss:derivedcategories}
Let us start with notation on derived categories. For any algebraic stack $\cX$ over $\ov{k}$, write $\Shv(\cX)$ for the derived category of $\ov\bQ_\ell$-sheaves on $\cX$ as in \cite[Sect. 1.1.1]{AGKRRV20b}. For any morphism $f:\cX\ra\cY$ of algebraic stacks over $\ov{k}$, we have functors $Rf^!:\Shv(\cY)\ra\Shv(\cX)$ and $Rf_*:\Shv(\cX)\ra\Shv(\cY)$ \cite[Sects. 1.1.1--1.1.2]{AGKRRV20b}, and by using the alternative description of $\Shv(\cX)$ from \cite[(C.1)]{AGKRRV20a}, we similarly obtain functors $f^*:\Shv(\cY)\ra\Shv(\cX)$ and $Rf_!:\Shv(\cX)\ra\Shv(\cY)$. Moreover, these constructions are functorial in $f$.

Now let $\cX_1$ and $\cX_2$ be smooth quasicompact schemes over $\ov{k}$. Then the external tensor product functor
\begin{align*}
\Shv(\cX_1)\otimes\Shv(\cX_2)\ra\Shv(\cX_1\times_{\ov{k}}\cX_2)
\end{align*}
is fully faithful \cite[Lemma A.2.6]{GKRV19}, where $\otimes$ denotes the Lurie tensor product over $\ov\bQ_\ell$ \cite[\S4.8.1]{Lu17}.

\subsection{}\label{ss:partialfrobeniusgeometry}
Products of spaces over $k$ have the following Frobenius structures. Let $I$ be a finite set, and let $\cX$ be a prestack over $k$. For any subset $I_1$ of $I$, write $\Frob_{I_1}:\cX^I\ra\cX^I$ for the product $\big(\prod_{i\in I_1}\Frob_\cX\big)\times\big(\prod_{i\in I\ssm I_1}\id_\cX\big)$. Note that $\Frob_{I_1}$ and $\Frob_{I_2}$ commute for any subsets $I_1$ and $I_2$ of $I$, and for any partition $I_1,\dotsc,I_k$ of $I$, the composition of $\Frob_{I_j}$ for all $1\leq j\leq k$ equals the absolute $q$-Frobenius endomorphism of $\cX$.

Next, let $\ze:I\ra J$ be a map of finite sets, and let $f:\cX\ra\cY$ be a morphism of prestacks over $k$. This induces a morphism $f^\ze:\cX^J\ra\cY^I$, and if $I_1=\ze^{-1}(J_1)$ for some subset $J_1$ of $J$, we get a commutative square
\begin{align*}
  \xymatrix{\cX^J\ar[r]^-{f^\ze}\ar[d]^-{\Frob_{J_1}} & \cY^I\ar[d]^-{\Frob_{I_1}}\\
\cX^J\ar[r]^-{f^\ze} & \cY^I.
  }
\end{align*}

Now suppose $\cX$ is an algebraic stack over $k$. Because $\Frob_{I_1}$ and hence $\Frob_{I_1,\ov{k}}$ are universal homeomorphisms, we see that $\Frob_{I_1,\ov{k}}^*$ yields an equivalence on categories of \'etale sheaves. By applying this to subsets $I_1,\dotsc,I_k$ of $I$ and using the functoriality of $\Shv$, we obtain an action of $\bZ^k$ on $\Shv(\cX^I_{\ov{k}})$. Finally, if $f$ is a morphism of algebraic stacks over $k$, we obtain a functor $f^{\ze,*}_{\ov{k}}:\Shv(\cY^I_{\ov{k}})\ra\Shv(\cX^J_{\ov{k}})$ that is compatible with this action.

\subsection{}\label{ss:fweil}
Next, we turn to a group-theoretic version of \ref{ss:partialfrobeniusgeometry}. Let $\cX$ be a geometrically connected algebraic stack over $k$. Since $\Frob_{I_1,\ov{k}}$ is a universal homeomorphism, it induces an automorphism of $\pi_1(\cX_{\ov{k}})$ as a topological group. By applying this to subsets $I_1,\dotsc,I_k$ of $I$, we obtain a continuous action of $\bZ^k$ on $\pi_1(\cX_{\ov{k}})$. Write $\FWeil_I^{I_1,\dotsc,I_k}(\cX)$ for the resulting semidirect product $\pi_1(\cX_{\ov{k}})\rtimes\bZ^k$. When the $I_j$ are precisely the singletons in $I$, we write $\FWeil_I(\cX)$ instead, noting that this recovers the Weil group $\Weil(\cX)$ when $I$ itself is a singleton.

Let $\ze:I\ra J$ be a map of finite sets, let $f:\cX\ra\cY$ be a morphism of geometrically connected algebraic stacks over $k$, and suppose that $I_j=\ze^{-1}(J_j)$ for some subsets $J_1,\dotsc,J_k$ of $J$. We see from \ref{ss:partialfrobeniusgeometry} that $f^*_{\ze,\ov{k}}:\pi_1(\cY_{\ov{k}}^J)\ra\pi_1(\cX_{\ov{k}}^I)$ commutes with the $\bZ^k$-action. This induces a continuous homomorphism
\begin{align*}
\FWeil_J^{J_1,\dotsc,J_k}(\cY)\ra\FWeil_I^{I_1,\dotsc,I_k}(\cX).
\end{align*}
In particular, by applying this to the inclusion of singletons in $I$ and $f=\id_\cX$, we obtain a continuous homomorphism $\FWeil_I(\cX)\ra\Weil(\cX)$ for each $i$ in $I$. Together, they yield a continuous homomorphism
\begin{align*}
\FWeil_I(\cX)\ra\Weil(\cX)^I,
\end{align*}
which is surjective because $\pi_1(\cX_{\ov{k}}^I)\ra\pi_1(\cX_{\ov{k}})^I$ is surjective.

\subsection{}\label{ss:partialfrobeniusmonodromy}
We now recall the monodromy interpretation of Frobenius--Weil groups, as well as \emph{Drinfeld's lemma}. Let $\cX$ be a geometrically connected algebraic stack over $k$. By restricting to $\pi_1(\cX_{\ov{k}}^I)$ and separately considering the action of $\bZ^k$, we see that finite-dimensional continuous representations of $\FWeil^{I_1,\dotsc,I_k}_I(\cX)$ over $\ov\bQ_\ell$ are equivalent to smooth $\ov\bQ_\ell$-sheaves on $\cX^I_{\ov{k}}$ equipped with commuting $\Frob_{I_j,\ov{k}}$-semilinear automorphisms. The latter are called \emph{partial Frobenius morphisms}. Moreover, a similar equivalence holds for ind-smooth $\ov\bQ_\ell$-sheaves, and precomposing with the continuous homomorphisms from \ref{ss:fweil} corresponds to pullback.

Now suppose $\cX$ is a noetherian scheme. Then, as in the proof of \cite[Theorem 5.6]{HRS20}, we see that \cite[Lemme 8.11]{Laf16} and \cite[Lemma 5.7]{HRS20} imply that any finite-dimensional continuous representation of $\FWeil_I(\cX)$ over $\ov\bQ_\ell$ factors through $\Weil(\cX)^I$ via the continuous homomorphism $\FWeil_I(\cX)\ra\Weil(\cX)^I$ from \ref{ss:fweil}.\footnote{Strictly speaking, \cite[Lemma 5.7]{HRS20} applies over finite extensions of $\bQ_\ell$, not $\ov\bQ_\ell$. But $\FWeil_I(\cX)$ is an extension of a finitely-generated group by a compact group, so its finite-dimensional continuous representations over $\ov\bQ_\ell$ are defined over finite extensions of $\bQ_\ell$.}

\subsection{}\label{ss:partialfrobeniusgaloisfrobenius}
Partial Frobenii and Frobenius elements in the Weil group are related as follows. Maintain the assumptions of \ref{ss:partialfrobeniusmonodromy}, and suppose $\cX$ is geometrically integral over $k$. Let $\ov{\eta_I}$ be a geometric generic point of $\cX^I_{\ov{k}}$. Let $k'$ be a finite extension of $k$ with degree $r$, and let $\ul{x}=(x_i)_{i\in I}$ be a point of $\cX^I(k')$. Choose a geometric point $\ov{\ul{x}}$ of $\cX_{\ov{k}}^I$ lying over $\ul{x}$, as well as an \'etale path $\ov{\eta_I}\rightsquigarrow\ov{\ul{x}}$.

Write $\pr_i:\cX^I\ra\cX$ for projection onto the $i$-th factor. This induces an \'etale path $\pr_i(\ov{\eta_I})\rightsquigarrow\ov{x_i}$, from which we naturally obtain a continuous homomorphism $\Weil(x_i)\ra\Weil(\cX)$, where we form $\Weil(x_i)$ using absolute $q^r$-Frobenius instead of absolute $q$-Frobenius as in \ref{ss:fweil}. Write $\ga_{x_i}$ for the image of the generator of $\bZ=\Weil(x_i)$ in $\Weil(\cX)$.

Let $\sM$ be a smooth $\ov\bQ_\ell$-sheaf on $\cX_{\ov{k}}^I$ equipped with partial Frobenii
\begin{align*}
\ov{F}_{i}:\Frob_{i,\ov{k}}^*\sM\ra^\sim\sM.
\end{align*}
Now we have a specialization isomorphism $\sM|_{\ov{\ul{x}}}\ra^\sim\sM|_{\ov{\eta_I}}$. On the one hand, \ref{ss:partialfrobeniusmonodromy} endows $\sM|_{\ov{\eta_I}}$ with an action of $\Weil(\cX)^I$. On the other hand, we see
\begin{align*}
\ov{F}_{i}\circ\Frob^*_{i,\ov{k}}(\ov{F}_{i})\circ\dotsb\circ\Frob^{r-1,*}_{i,\ov{k}}(\ov{F}_{i})
\end{align*}
restricts to an automorphism of $\sM|_{\ov{\ul{x}}}$.
\begin{prop*}
Under the specialization isomorphism $\sM|_{\ov{\ul{x}}}\ra^\sim\sM|_{\ov{\eta_I}}$, this automorphism corresponds to the action of $\ga_{x_i}$ in the $i$-th entry of $\Weil(\cX)^I$.
\end{prop*}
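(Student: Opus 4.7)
The plan is to interpret both sides of the claimed equality as the action of a single element of $\FWeil_I(\cX)$ on $\sM$ via the equivalence in \ref{ss:fweilaction}, and then verify they agree by unwinding definitions. Write $\theta_i \in \bZ^I \subset \FWeil_I(\cX)$ for the standard generator of the $i$-th factor. The chosen \'etale path $\ov{\eta_I} \rightsquigarrow \ov{\ul x}$ induces the specialization isomorphism $\sM|_{\ov{\ul x}} \to \sM|_{\ov{\eta_I}}$ which, by naturality of the equivalence in \ref{ss:fweilaction}, intertwines the $\FWeil_I(\cX)$-actions on the two stalks. So it suffices to show that both sides of the proposition correspond to the action of $r\theta_i$.

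First I would analyze the endomorphism on $\sM|_{\ov{\ul x}}$. Under the equivalence, when a geometric point $\ov{\star}$ of $\cX^I_{\ov k}$ is fixed by $\Frob_{\{i\},\ov k}$, the action of $\theta_i$ on $\sM|_{\ov{\star}}$ is given directly by $\ov F_{\{i\}}|_{\ov{\star}}$, with no intervening \'etale path. Since $\Frob_{\{i\},\ov k}^r$ fixes $\ov{\ul x}$, the action of $r\theta_i$ on $\sM|_{\ov{\ul x}}$ unwinds via the cocycle condition into the composition $\ov F_{\{i\}} \circ \Frob^*_{\{i\},\ov k}(\ov F_{\{i\}}) \circ \dotsb \circ \Frob^{r-1,*}_{\{i\},\ov k}(\ov F_{\{i\}})$ evaluated at $\ov{\ul x}$, matching the left-hand side of the proposition.

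Next I would identify $r\theta_i$ with $\ga_{x_i}$ in the $i$-th factor of $\Weil(\cX)^I$ under the surjection $\FWeil_I(\cX) \to \Weil(\cX)^I$ from \ref{ss:fweilfunctoriality}; the factorization through $\Weil(\cX)^I$ noted in \ref{ss:fweilaction} then completes the identification on $\sM|_{\ov{\eta_I}}$. By construction in \ref{ss:fweilfunctoriality}, $\theta_i$ projects to the generator of $\bZ \subset \Weil(\cX)$ in the $i$-th factor, so $r\theta_i$ projects to the $r$-fold iterate of Frobenius in that factor. On the other hand, $\ga_{x_i}$ is by definition the image under $\Weil(x_i) \to \Weil(\cX)$ of the generator of $\Weil(x_i) = \bZ$, i.e., of $q^r$-Frobenius at $x_i$. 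A diagram chase, comparing the two paths $\pr_i(\ov{\eta_I}) \rightsquigarrow \ov{x_i}$ obtained from a common refinement---namely the single \'etale path $\ov{\eta_I} \rightsquigarrow \ov{\ul x}$ projected to the $i$-th factor---shows that $r\theta_i$ and $\ga_{x_i}$ agree in the $i$-th factor.

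The main obstacle is carefully tracking how the \'etale paths, partial Frobenius pullbacks, and the semidirect product structure of $\FWeil_I(\cX)$ combine, especially in the final diagram chase. The content here is essentially the $I$-indexed version of \cite[Lemme 8.11]{Laf16}, which reduces to the single-factor case by applying the functoriality in \ref{ss:fweilfunctoriality} one index at a time.
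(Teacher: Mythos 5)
Your approach — interpreting both sides as the action of a single element of $\FWeil_I(\cX)$ and identifying it with $r\theta_i$ — is genuinely different from the paper's. The paper instead reduces to external tensor products $\mathlarger{\mathlarger{\boxtimes}}_{i\in I}\sE_i$ (by writing $\sM|_{\ov{\eta_I}}$ as a quotient of such a product of Weil $E$-sheaves) and then to the single-factor case $I=\{i\}$ with $\sM=\sE_i$, where the claim is literally the definition of $\ga_{x_i}$: the specialization isomorphism is the path transport, and $\ga_{x_i}$ is the image of $q^r$-Frobenius under the very same path.

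The gap in your proposal lies in the step you call a ``diagram chase.'' You assert that $r\theta_i$ projects to $\ga_{x_i}$ under $\FWeil_I(\cX)\ra\Weil(\cX)^I$, noting that $\theta_i$ maps to the generator of $\bZ\subset\Weil(\cX)$ in the $i$-th factor. But $r\theta_i$ then maps to the element $r$ in $\bZ\subset\Weil(\cX)=\pi_1(\cX_{\ov{k}})\rtimes\bZ$, whereas $\ga_{x_i}$ is the image of the $q^r$-Frobenius generator under the homomorphism $\Weil(x_i)\ra\Weil(\cX)$ determined by the \'etale path $\pr_i(\ov{\eta_I})\rightsquigarrow\ov{x_i}$; it lies over $r$ but its $\pi_1$-component is in general nontrivial, depending on the path. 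The two therefore do not agree in $\FWeil_I(\cX)$. What is actually needed is that the \emph{conjugate} of $r\theta_i$ by (a lift of) the path $\ov{\eta_I}\rightsquigarrow\ov{\ul{x}}$ projects to $\ga_{x_i}$, and this conjugation is precisely what your first paragraph glosses over when you say the specialization isomorphism ``intertwines the $\FWeil_I(\cX)$-actions on the two stalks'' — those two actions use different basepoints and are identified only after conjugation by the path. The paper's reduction avoids this bookkeeping entirely: once $\sM=\sE_i$ and $I=\{i\}$, both the iterated partial Frobenius and $\ga_{x_i}$ are built from the same path, so the comparison is tautological.

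Two smaller points. First, the decomposition step is not optional — it is what makes the stalk at $\ov{\ul{x}}$ decompose into a tensor product over $i\in I$ compatibly with partial Frobenii, so the statement localizes to single factors; your approach never secures this. Second, \cite[Lemme 8.11]{Laf16} is Drinfeld's lemma (the factorization of the $\FWeil_I$-action through $\Weil^I$, already invoked in \ref{ss:fweilaction}); it is not the Frobenius-comparison you need for the diagram chase, so that citation does not supply the missing step.
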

\begin{proof}
  By using \ref{ss:partialfrobeniusmonodromy} to pass to finite-dimensional continuous representations of $\Weil(\cX)^I$ over $\ov\bQ_\ell$, we obtain a surjection $\mathlarger{\mathlarger{\boxtimes}}_{i\in I}\sE_i\ra\sM$ compatible with partial Frobenii, where the $\sE_i$ are Weil $\ov\bQ_\ell$-sheaves on $\cX$. Thus it suffices to prove the claim for $\mathlarger{\mathlarger{\boxtimes}}_{i\in I}\sE_i$. Now the identification $(\mathlarger{\mathlarger{\boxtimes}}_{i\in I}\sE_i)|_{\ov{\ul{x}}}=\bigotimes_{i\in I}(\cE|_{\ov{x_i}})$ is compatible with partial Frobenii, and $(\mathlarger{\mathlarger{\boxtimes}}_{i\in I}\sE_i)|_{\ov{\eta_I}}=\bigotimes_{i\in I}(\cE|_{\pr_i(\ov{\eta_I})})$ is compatible with the $\Weil(\cX)^I$-action. So it suffices to prove the claim for $\sE_i$ and $I=\{i\}$. But $\bZ=\Weil(x_i)\ra\Weil(\cX)$ sends the generator of $\bZ$ to $r$ times the generator of $\bZ\subseteq\Weil(\cX)$, and by definition this image acts via
  \begin{gather*}
\ov{F}_{i}\circ\Frob^*_{i,\ov{k}}(\ov{F}_{i})\circ\dotsb\circ\Frob^{r-1,*}_{i,\ov{k}}(\ov{F}_{i})\qedhere
  \end{gather*}
\end{proof}

\subsection{}\label{ss:partialfrobeniusshtukas}
We now specialize to the setting of moduli spaces of shtukas. Consider the morphism $\Fr^{(d)(I_1,\dotsc,I_k)}_{I_1,N,I}:\Sht^{(d)(I_1,\dotsc,I_k)}_{G,N,I}\ra\Sht^{(d)(I_2,\dotsc,I_k,I_1)}_{G,N,I}$ that sends
\begin{align*}
\hspace{-.2cm}((D_i)_{i\in I\ssm I_1},(D_i)_{i\in I_1},(\cG_0,\psi_0)\dra^{\phi_1}(\cG_1,\psi_1)\dra^{\phi_2}\dotsb\dra^{\phi_{k-1}}(\cG_{k-1},\psi_{k-1})\dra^{\phi_k}(\prescript\tau{}{\cG}_0,\prescript\tau{}{\psi}_0))
\end{align*}
to
\begin{align*}
\hspace{-.15cm}((D_i)_{i\in I\ssm I_1},(\prescript\tau{}{D}_i)_{i\in I_1},(\cG_1,\psi_1)\dra^{\phi_2}(\cG_2,\psi_2)\dra^{\phi_3}\dotsb\dra^{\phi_k}(\prescript\tau{}{\cG}_0,\prescript\tau{}{\psi}_0)\dra^{\prescript\tau{}{\phi}_1}(\prescript\tau{}{\cG}_1,\prescript\tau{}{\psi}_1)).
\end{align*}
Let $\Om$ be a finite $\fS_d^I$-stable and $\Ga_Q^{d\times I}$-stable subset of $X_\bullet^+(T)^{d\times I}$. We see that $\Fr^{(d)(I_1,\dotsc,I_k)}_{I_1,N,I}$ sends $\Sht^{(d)(I_1,\dotsc,I_k)}_{G,N,I,\Om}|_{(\Div^d_{U\ssm N})^I}$ to $\Sht^{(d)(I_2,\dotsc,I_k,I_1)}_{G,N,I,\Om}|_{(\Div^d_{U\ssm N})^I}$.

Next, observe that the square
\begin{align*}
\xymatrixcolsep{2cm}
\xymatrix{\Sht^{(d)(I_1,\dotsc,I_k)}_{G,N,I}\ar[r]^-{\Fr^{(d)(I_1,\dotsc,I_k)}_{I_1,N,I}}\ar[d]^-\fp & \Sht^{(d)(I_2,\dotsc,I_k,I_1)}_{G,N,I}\ar[d]^-\fp \\
(\Div_{X\ssm N}^d)^I\ar[r]^-{\Frob_{I_1}} & (\Div_{X\ssm N}^d)^I
}
\end{align*}
commutes. Finally, note that the composition
\begin{align*}
\xymatrixcolsep{2cm}
\xymatrix{\Sht^{(d)(I_1,\dotsc,I_k)}_{G,N,I}\ar[r]^-{\Fr^{(d)(I_1,\dotsc,I_k)}_{I_1,N,I}} & \dotsb \ar[r]^-{\Fr^{(d)(I_k,I_1,\dotsc,I_{k-1})}_{I_k,N,I}} & \Sht^{(d)(I_1,\dotsc,I_k)}_{G,N,I}
}
\end{align*}
equals $\Frob:\Sht^{(d)(I_1,\dotsc,I_k)}_{G,N,I}\ra\Sht^{(d)(I_1,\dotsc,I_k)}_{G,N,I}$. Since $\Frob$ is a universal homeomorphism, this shows that $\Fr^{(d)(I_1,\dotsc,I_k)}_{I_1,N,I}$ is a universal homeomorphism too.

\subsection{}\label{ss:partialfrobeniuscompatibility}
Our $\Fr^{(d)(I_1,\dotsc,I_k)}_{I_1,N,I}$ are compatible with other structures as follows. If $I_1,\dotsc,I_k$ refines another ordered partition $I_1',\dotsc,I_{k'}'$ of $I$ such that $I'_1 = I_1\cup\dotsb\cup I_j$, we obtain a commutative diagram
\begin{align*}
\xymatrixcolsep{2cm}
  \xymatrix{\Sht^{(d)(I_1,\dotsc,I_k)}_{G,N,I}\ar[r]^-{\Fr^{(d)(I_1,\dotsc,I_k)}_{I_1,N,I}}\ar[d]^-{\pi^{(I_1,\dotsc,I_k)}_{(I'_1,\dotsc,I'_{k'})}} & \dotsb \ar[r]^-{\Fr^{(d)(I_j,I_1,\dotsc,I_{j-1})}_{I_j,N,I}} & \Sht^{(d)(I_{j+1},\dotsc,I_k,I_1,\dotsc,I_j)}_{G,N,I}\ar[d]^-{\pi^{(I_1,\dotsc,I_k)}_{(I'_1,\dotsc,I'_{k'})}}\\
  \Sht^{(d)(I_1',\dotsc,I_{k'}')}_{G,N,I}\ar[rr]^-{\Fr^{(d)(I'_1,\dotsc,I'_{k'})}_{I'_1,N,I}} & & \Sht^{(d)(I_2',\dotsc,I_{k'}')}_{G,N,I}.
}
\end{align*}
We also have a commutative square
\begin{align*}
  \xymatrix{
  \Sht^{(d)(I_1,\dotsc,I_k)}_{G,N,I}\ar[d]^-{\Fr^{(d)(I_1,\dotsc,I_k)}_{I_1,N,I}} & \ar[l]_-\al\Sht^{(d\times I_1,\dotsc,d\times I_k)}_{G,N,d\times I}\ar[d]^-{\Fr^{(d\times I_1,\dotsc,d\times I_k)}_{d\times I_1,N,d\times I}}\\
  \Sht^{(d)(I_2,\dotsc,I_k,I_1)}_{G,N,I} & \ar[l]_-\al\Sht^{(d\times I_2,\dotsc,d\times I_k,d\times I_1)}_{G,N,d\times I}.
  }
\end{align*}
Finally, note that the $\Fr^{(d)(I_1,\dotsc,I_k)}_{I_1,N,I}$ pass to the inverse limit in \ref{ss:adelicaction}. The resulting morphism $\Fr^{(d)(I_1,\dotsc,I_k)}_{I_1,\infty,I}:\Sht^{(d)(I_1,\dotsc,I_k)}_{G,\infty,I}\ra\Sht^{(d)(I_2,\dotsc,I_k,I_1)}_{G,\infty,I}$ evidently commutes with the $G(\bA_Q)$-action, so we see that $\Fr^{(d)(I_1,\dotsc,I_k)}_{I_1,N,I}$ commutes with the finite \'etale correspondences from \ref{ss:heckecorrespondence} and Remark \ref{rem:extension}.

\subsection{}\label{ss:shtukapf}
In the usual, unsymmetrized case, we convert our $\Fr^{(I_1,\dotsc,I_k)}_{I_1,N,I}$ into partial Frobenius morphisms as follows. Let $W$ be in $\Rep_{\ov\bQ_\ell}((\prescript{L}{}{G})^{I})$. Then \ref{ss:partialfrobeniusshtukas} and the proof of \cite[Proposition 3.3]{Laf16}\footnote{Now \cite[Proposition 3.3]{Laf16} only treats the case of split $G$. However, it extends to the general case, which is already implicitly used in \cite[\S12]{Laf16}.} yield a canonical isomorphism
\begin{align*}
\Fr^{(I_1,\dotsc,I_k),*}_{I_1,N,I}\sF^{(I_2,\dotsc,I_k,I_1)}_{N,I,W}\ra^\sim\sF^{(I_1,\dotsc,I_k)}_{N,I,W}.
\end{align*}
Since $\Fr^{(I_1,\dotsc,I_k)}_{I_1,N,I}$ and $\Frob_{I_1}$ are universal homeomorphisms, applying proper base change to the commutative square from \ref{ss:partialfrobeniusshtukas} and arguing as in \cite[\S4.3]{Laf16} yields an isomorphism
\begin{align*}
  F_{I_1}:\Frob_{I_1}^*\sH_{N,I,W}\ra^\sim\sH_{N,I,W}
\end{align*}
of ind-(constructible complexes of $\ov\bQ_\ell$-sheaves) on $(U\ssm N)^{I}$.

By \ref{ss:partialfrobeniuscompatibility}, we see that $F_{I_1}$ is compatible with the independence of $\sH_{N,I,W}$ on the ordered partition $I_1,\dotsc,I_k$ up to isomorphism. In particular, reordering the $I_1,\dotsc,I_k$ shows that
\begin{align*}
F_{I_2}\circ\Frob_{I_2}^*(F_{I_1}) = F_{I_1}\circ\Frob_{I_1}^*(F_{I_2})
\end{align*}
on $\sH^p_{N,I,W}$. We can also use \ref{ss:partialfrobeniuscompatibility} to see that $F_{I_1}|\eta_{(1)I}$ commutes with the action of $\fH_{G,N}$ on $\sH_{N,I,W}|_{\eta_{(1)I}}$.

\subsection{}\label{ss:AGKRRV}
We now state the anticipated result of Arinkin--Gaitsgory--Kazhdan--Raskin--Rozenblyum--Varshavsky. By applying \ref{ss:partialfrobeniusgeometry}, we see that the fully faithful functor $\Shv((U\ssm N)_{\ov{k}})^{\otimes I}\hookrightarrow\Shv((U\ssm N)_{\ov{k}}^I)$ is equivariant for the action of $\bZ^I$. In particular, we naturally have a fully faithful functor
\begin{align*}
(\Shv((U\ssm N)_{\ov{k}})^{\otimes I})^{B\bZ^I}\hookrightarrow\Shv((U\ssm N)_{\ov{k}}^I)^{B\bZ^I}.
\end{align*}
\begin{thm*}\footnote{While \cite{AGKRRV21} only treats the case of split $G$, their methods adapt to the general case.}
Assume that $X$ is geometrically connected over $k$. Then we have a functor
  \begin{align*}
    \Rep_{\ov\bQ_\ell}((\prescript{L}{}{G})^I) \ra (\Shv((U\ssm N)_{\ov{k}})^{\otimes I})^{B\bZ^I}\mbox{ denoted by }W\mapsto\sT_{N,I,W}
  \end{align*}
  satisfying the following properties:
  \begin{enumerate}[a)]
  \item for all $W$ in $\Rep_{\ov\bQ_\ell}((\prescript{L}{}{G})^I)$, the image of $\sT_{N,I,W}$ in $\Shv((U\ssm N)_{\ov{k}}^I)$ is naturally isomorphic to the pullback $\sH_{N,I,W,\ov{k}}$ of $\sH_{N,I,W}$ to $(U\ssm N)_{\ov{k}}^I$, and the equivariance data corresponds to the pullbacks $\ov{F}_{i}$ of the $F_{i}$,

  \item as $I$ varies, there exist $\infty$-categorical coherences for $\sT_{N,I,W}$ in the sense of \cite[Sect. 1.6]{AGKRRV21}.
  \end{enumerate}
\end{thm*}
\begin{proof}
When $U\ssm N=X$, take $\sT_{N,I,-}$ to be $\Sht^{\text{Tr}}_I(-)$ as in \cite[Sect. 4.1.1]{AGKRRV21}. This has the desired structure by \cite[Sect. 4.5.3]{AGKRRV21}. Part b) follows since the inclusion $\Shv((U\ssm N)_{\ov{k}})^{\otimes I}\hookrightarrow\Shv((U\ssm N)_{\ov{k}}^I)$ is compatible with changing $I$, and part a) follows from \cite[Theorem 4.1.2]{AGKRRV21} and \cite[Sect. 4.5.4]{AGKRRV21}. The general case is forthcoming work of Arinkin--Gaitsgory--Kazhdan--Raskin--Rozenblyum--Varshavsky.
\end{proof}
\begin{rem*}
Even without the geometrically connected assumption on $X$, we expect some form of Theorem \ref{ss:AGKRRV} to hold.
\end{rem*}

\subsection{}\label{ss:symmetrizedescend}
Maintain the assumptions of Theorem \ref{ss:AGKRRV}, which we will use to define symmetrized versions of $\sT_{N,I,W}$ as follows. Note that \ref{ss:partialfrobeniusgeometry} yields an action of $\fS_d^I$ on $\Shv((U\ssm N)_{\ov{k}}^{d\times I})$ intertwining the action of $\bZ^{d\times I}$, and observe that
\begin{align*}
\Shv((U\ssm N)_{\ov{k}})^{\otimes(d\times I)}\hookrightarrow\Shv((U\ssm N)_{\ov{k}}^{d\times I})
\end{align*}
is equivariant for this action, where $\Shv((U\ssm N)_{\ov{k}})^{\otimes(d\times I)}$ has the natural $\fS_d^I$-action.

Let $W$ be in $\Rep_{\ov\bQ_\ell}((\prescript{L}{}{G})^{d\times I})$, and suppose $W\circ\sg^*=W$ for all $\sg$ in $\fS_d^I$. Then Theorem \ref{ss:AGKRRV}.b) gives an $\fS_d^I$-equivariance structure on the object $\sT_{N,d\times I,W}$ of $(\Shv((U\ssm N)_{\ov{k}})^{\otimes(d\times I)})^{B\bZ^{d\times I}}$, so we obtain an object of
\begin{align*}
  ((\Shv((U\ssm N)_{\ov{k}})^{\otimes(d\times I)})^{B\bZ^{d\times I}})^{B\fS_d^I} &= (\Shv((U\ssm N)_{\ov{k}})^{\otimes(d\times I)})^{B(\bZ^{d\times I}\rtimes\fS_d^I)} \\
  & = ((\Shv((U\ssm N)_{\ov{k}})^{B\bZ})^{\otimes(d\times I)})^{B\fS_d^I},
\end{align*}
where we use \cite[Proposition 2.5.7]{GR17a} to see that $\otimes$ commutes with taking equivariant objects. 

By finite \'etale descent, we see that pullback yields a natural equivalence
\begin{align*}
\Shv((U\ssm N)^{d\times I}_{\ov{k}}/\fS_d^I)\ra^\sim\Shv((U\ssm N)^{d\times I}_{\ov{k}})^{B\fS_d^I},
\end{align*}
where $(U\ssm N)^{d\times I}_{\ov{k}}/\fS_d^I=((U\ssm N)_{\ov{k}}^d/\fS_d)^I$ denotes the stack-theoretic quotient. Thus if we only remember the action of $\bZ^I\times\fS_d^I\subseteq\bZ^{d\times I}\rtimes\fS_d^I$, then we obtain an object $\sT^{(d)}_{N,I,W}$ of
\begin{align*}
\Shv((U\ssm N)_{\ov{k}}^{d\times I})^{B(\bZ^I\times\fS_d^I)} = \Shv(((U\ssm N)^d/\fS_d)^I_{\ov{k}})^{B\bZ^I}.
\end{align*}
Finally, since $\fS_d$ preserves and acts freely on the open subscheme $(U\ssm N)^d_\circ$ of $(U\ssm N)^d$, we can identify the stack-theoretic quotient $((U\ssm N)^d_\circ/\fS_d)^I$ with $(\Div^{d,\circ}_{U\ssm N})^I$ via \ref{ss:div}. In particular, restricting $\sT_{N,I,W}^{(d)}$ to $((U\ssm N)^d_{\circ,\ov{k}}/\fS_d)^I$ yields an object of
\begin{align*}
\Shv((\Div^{d,\circ}_{U\ssm N})^I_{\ov{k}})^{B\bZ^I}.
\end{align*}

\subsection{}\label{ss:fiberfunctors}
From here, we obtain complexes of representations as follows. Write $\eta$ for the generic point of $X$, and note that taking fibers at $\eta$ yields a functor
\begin{align*}
\Shv((U\ssm N)_{\ov{k}})^{B\bZ}\ra D(\Weil(\eta),\ov\bQ_\ell).
\end{align*}
By taking $(d\times I)$-th tensor powers and postcomposing with the exterior product, we get a functor
\begin{align*}
\Shv((U\ssm N)_{\ov{k}})^{B\bZ})^{\otimes(d\times I)}\ra D(\Weil(\eta)^{d\times I},\ov\bQ_\ell).
\end{align*}
Finally, taking $\fS_d^I$-equivariant objects gives us a functor
\begin{align*}
((\Shv((U\ssm N)_{\ov{k}})^{B\bZ})^{\otimes(d\times I)})^{B\fS_d^I}\ra D(\Weil(\eta)^{d\times I}\rtimes\fS_d^I,\ov\bQ_\ell).
\end{align*}

\subsection{}\label{ss:galfrob}
We conclude by using work of Xue to bootstrap Proposition \ref{ss:partialfrobeniusgaloisfrobenius} to the ind-smooth $\ov\bQ_\ell$-sheaves $\sH^p_{N,I,W}$ on $(U\ssm N)^I$. Assume that $X$ is geometrically connected over $k$, and apply the notation in \ref{ss:partialfrobeniusgaloisfrobenius} to $\cX=U\ssm N$. By projecting to $(U\ssm N)^I$, we view $\ov{\eta_I}$ and $\ov{\ul{x}}$ as geometric points of $(U\ssm N)^I$.

We have a specialization isomorphism $\sH^p_{N,I,W}|_{\ov{\ul{x}}}\ra^\sim\sH^p_{N,I,W}|_{\ov{\eta_I}}$ by Theorem \ref{ss:cohomologyissmooth}. On the one hand, Theorem \ref{ss:AGKRRV}.a) and Theorem \ref{ss:cohomologyissmooth} endow $\sH^p_{N,I,W}|_{\ov{\eta_I}}$ with an action of $\Weil(\eta)^I$, which one can show factors through $\Weil(U\ssm N)^I$ \cite[Proposition 6.0.13]{Xue20b}. On the other hand, we see that
\begin{align*}
F_{i}\circ\Frob^*_{i}(F_{i})\circ\dotsb\circ\Frob^{r-1,*}_{i}(F_{i})
\end{align*}
restricts to an automorphism of $\sH^p_{N,I,W}|_{\ov{\ul{x}}}$.
\begin{prop*}
Under the identification $\sH^p_{N,I,W}|_{\ov{\ul{x}}}\ra^\sim\sH^p_{N,I,W}|_{\ov{\eta_I}}$, this automorphism corresponds to the action of $\ga_{x_i}$ in the $i$-th entry of $\Weil(U\ssm N)^I$.
\end{prop*}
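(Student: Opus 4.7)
The plan is to reduce to the smooth case treated by Proposition \ref{ss:galfrobprop}. First, I would use Theorem \ref{ss:drinfeldslemma}: the $\FWeil_I(U\ssm N)$-action on $\sH^p_{N,I,W,\Xi}|_{\ov{\eta_I}}$ factors through $\Weil(U\ssm N)^I$ via the continuous homomorphism of \ref{ss:fweilfunctoriality}, and realizes this stalk as a directed union of finite-dimensional continuous subrepresentations $V_h \subseteq \sH^p_{N,I,W,\Xi}|_{\ov{\eta_I}}$ of $\Weil(U\ssm N)^I$. Pulling back along $\FWeil_I(U\ssm N) \to \Weil(U\ssm N)^I$, each $V_h$ is also $\FWeil_I(U\ssm N)$-stable, and hence by \ref{ss:fweilaction} corresponds to a smooth $E$-sheaf $\sM_h$ on $(U\ssm N)^I_{\ov{k}}$ equipped with partial Frobenii such that the inclusions $V_h \hookrightarrow V_{h'}$ come from morphisms of smooth sheaves compatible with partial Frobenii. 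The directed colimit $\varinjlim_h \sM_h$ then recovers the restriction of $\sH^p_{N,I,W,\Xi}$ to $(U\ssm N)^I_{\ov{k}}$ together with its partial Frobenius structure.

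Next, I would apply Proposition \ref{ss:galfrobprop} to each $\sM_h$: under the specialization isomorphism $\sM_h|_{\ov{\ul x}} \ra^\sim \sM_h|_{\ov{\eta_I}} = V_h$, the $r$-fold composition $F_{\{i\}} \circ \Frob^*_{\{i\},\ov{k}}(F_{\{i\}}) \circ \dotsb \circ \Frob^{r-1,*}_{\{i\},\ov{k}}(F_{\{i\}})$ on $\sM_h|_{\ov{\ul x}}$ corresponds to the action of $\ga_{x_i}$ via the $i$-th entry of $\Weil(U\ssm N)^I$ on $V_h$. Since stalks commute with directed colimits and the $r$-fold composition on the colimit is simply the colimit of the $r$-fold compositions, and since the specialization isomorphism provided by ind-smoothness of $\sH^p_{N,I,W,\Xi}$ (Theorem \ref{ss:cohomologyissmooth}) coincides with the colimit of the specialization isomorphisms $\sM_h|_{\ov{\ul x}} \ra^\sim \sM_h|_{\ov{\eta_I}}$ provided by smoothness of each $\sM_h$, passing to the colimit over $h$ yields the desired equality for $\sH^p_{N,I,W,\Xi}$.

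The main obstacle is ensuring that the smooth subsheaves $\sM_h$ are chosen compatibly with partial Frobenii: the decomposition produced by raw ind-smoothness alone (as used in \ref{ss:descentprep}) need not respect the partial Frobenius structure, so a bootstrap argument starting from Theorem \ref{ss:cohomologyissmooth} by itself does not go through. Invoking Theorem \ref{ss:drinfeldslemma} bypasses this by producing $\Weil(U\ssm N)^I$-stable subspaces, which are automatically stable under the image of $\FWeil_I(U\ssm N)$ and hence, via \ref{ss:fweilaction}, correspond to smooth sheaves whose inclusions are compatible with partial Frobenii. Once this choice is made, the remaining verifications—compatibility of stalks, compositions, and specialization isomorphisms with colimits—are formal.
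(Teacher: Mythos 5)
The central claim in your argument — that Theorem \ref{ss:drinfeldslemma} ``realizes this stalk as a directed union of finite-dimensional continuous subrepresentations $V_h\subseteq\sH^p_{N,I,W,\Xi}|_{\ov{\eta_I}}$ of $\Weil(U\ssm N)^I$'' — is not something that theorem asserts, and it is exactly the step that fails. Theorem \ref{ss:drinfeldslemma} only says that the $\FWeil_I(U\ssm N)$-action factors through the surjection $\FWeil_I(U\ssm N)\ra\Weil(U\ssm N)^I$; it provides no filtration by finite-dimensional $\Weil(U\ssm N)^I$-stable pieces. What the ind-smoothness of Theorem \ref{ss:cohomologyissmooth} gives you is a filtration by finite-dimensional subspaces stable under $\pi_1((U\ssm N)^I_{\ov k})$ (or even $\pi_1((U\ssm N)^I)$), but these need not be preserved by the partial Frobenii $F_{\{i\}}$, and you cannot just saturate them under $\Weil(U\ssm N)^I$ because the quotient $\Weil(U\ssm N)^I/\pi_1((U\ssm N)^I_{\ov k})\cong\bZ^I$ is not compact — the $\Weil(U\ssm N)^I$-span of a finite-dimensional subspace can be infinite-dimensional. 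You correctly identify this as the main obstacle, but then claim Theorem \ref{ss:drinfeldslemma} bypasses it; it does not.

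The paper avoids this by using a different finiteness structure. Invoking \cite[Lemma 6.0.9]{Xue20b} together with Theorem \ref{ss:cohomologyissmooth}, it writes $\sH^p_{N,I,W,\Xi}$ as a union of ind-smooth $E$-subsheaves $\sM$ over $(U\ssm N)^I$ that \emph{are} preserved by the $F_{\{i\}}$, but whose generic stalk $\sM|_{\ov{\eta_I}}$ is only finitely generated as a module over $\bigotimes_{i\in I}\fH_{G,u_i}$ — so infinite-dimensional over $E$. Proposition \ref{ss:galfrobprop} therefore cannot be applied to $\sM$ directly. Instead, the paper passes to the quotients $(\sM|_{\ov{\eta_I}})/\fm^n$ for maximal ideals $\fm$ of the (noetherian, finitely generated) Hecke algebra: these are finite-dimensional, hence correspond to genuine smooth $E$-sheaves with partial Frobenii on $(U\ssm N)^I_{\ov k}$, to which Proposition \ref{ss:galfrobprop} applies. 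The result for $\sM$ is then recovered by embedding $\sM|_{\ov{\eta_I}}\hookrightarrow\bigoplus_\fm(\sM|_{\ov{\eta_I}})_\fm^\wedge$, using that the partial Frobenii and Hecke actions commute so that this embedding is equivariant. In short, the finiteness needed to reduce to Proposition \ref{ss:galfrobprop} comes from quotients, not subobjects; your argument would need a different source of finite-dimensional $\Weil(U\ssm N)^I$-stable subrepresentations, and no such decomposition is established (or obviously true).
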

\begin{proof}
  Now \cite[Lemma 6.0.9]{Xue20b} and Theorem \ref{ss:cohomologyissmooth} show that $\sH^p_{N,I,W}$ is a union of ind-smooth $\ov\bQ_\ell$-subsheaves $\sM$ over $(U\ssm N)^I$, where $\sM|_{\ov{\eta_I}}$ is preserved by and finitely generated over $\bigotimes_{i\in I}\fH_{G,u_i}$ for some closed points $u_i$ of $U\ssm N$, and the $F_{i}$ restrict to isomorphisms on $\sM$. Hence it suffices to prove the analogous claim for $\sM$.

  Let $\fm$ be a maximal ideal of $\bigotimes_{i\in I}\fH_{G,u_i}$. Since the $\fH_{G,u_i}$ are finitely generated $\ov\bQ_\ell$-algebras, we see that $(\sM|_{\ov{\eta_I}})/\fm^n$ corresponds to a smooth $\ov\bQ_\ell$-sheaf on $(U\ssm N)^I$ equipped with partial Frobenii. By base changing to $(U\ssm N)_{\ov{k}}$, Proposition \ref{ss:partialfrobeniusgaloisfrobenius} proves the analogous claim for this smooth $\ov\bQ_\ell$-sheaf.

Because $\bigotimes_{i\in I}\fH_{G,u_i}$ is noetherian, the map $\sM|_{\ov{\eta_I}}\ra{}\textstyle\bigoplus_\fm(\sM|_{\ov{\eta_I}})^\wedge_\fm$ is injective, where $\fm$ runs over maximal ideals of $\bigotimes_{i\in I}\fH_{G,u_i}$, and $(-)^\wedge_\fm$ denotes $\fm$-adic completion. We similarly obtain an injection $\sM|_{\ov{\ul{x}}}\hookrightarrow\textstyle\bigoplus_\fm(\sM|_{\ov{\ul{x}}})^\wedge_\fm$. As the partial Frobenii and $\bigotimes_{i\in I}\fH_{G,u_i}$-actions commute, we see that these injections preserve the relevant structures. Thus the analogous claim for the sheaves corresponding to the $(\sM|_{\ov{\eta_I}})/\fm^n$ implies the claim for $\sM$, as desired.
\end{proof}

\section{The plectic conjecture}\label{s:plectic}
In this section, we prove our results on the plectic conjecture for (usual, unsymmetrized) moduli spaces of shtukas. We begin by using the relationship between Weil restriction and $\Bun_G$ from \S\ref{s:bung} to describe one incarnation of the conjectured \emph{plectic diagram} from \cite[(1.3)]{NS16}. This relates unsymmetrized shtukas for $G$ to symmetrized shtukas for $H$. From here, we use the link between symmetrized and unsymmetrized shtukas for $H$ to prove Theorem A. Next, we use the Hecke compatibility of this relation to prove Theorem B. We conclude by explicating our constructions to prove Theorem C.

\subsection{}\label{ss:plectichecke}
First, we describe the Hecke stack incarnation of the \emph{plectic diagram}. Recall the notation of \ref{ss:weilrestrictionbundles}, and write $d$ for the degree of $m$. Since $Y$ and hence $\Div^d_Y$ is proper, the morphism $m^{-1}:X\ra\Div^d_Y$ sending $x\mapsto m^{-1}(\Ga_x)$ is proper as well. As $m^{-1}$ is also a monomorphism, we see that it is a closed immersion. Because $m$ is \'etale over $U$, we see that $m^{-1}$ sends $U$ to $\Div^{d,\circ}_Y$.

For any $0\leq j_0\leq k$, recall the morphisms $p_{j_0}$ and $\fp$ from Definition \ref{ss:heckestack}.
\begin{prop*}
We have a natural Cartesian square
\begin{align*}
\xymatrixcolsep{3pc}
\xymatrix{\Hck^{(I_1,\dotsc,I_k)}_{G,N,I}\ar[r]\ar[d]^-{(p_{j_0},\fp)} & \Hck^{(d)(I_1,\dotsc,I_k)}_{H,M,I}\ar[d]^-{(p_{j_0},\fp)}\\
\Bun_{G,N}\times(X\ssm N)^I\ar[r]^-{c\times(m^{-1})^I} & \Bun_{H,M}\times(\Div_{Y\ssm M}^d)^I.
}
\end{align*}
\end{prop*}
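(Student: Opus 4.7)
The plan is to leverage the equivalence of categories from \ref{ss:weilrestrictionbundles}, which identifies $G\times S$-bundles on $X\times S$ with $H\times S$-bundles on $Y\times S$ and, compatibly with level structures, identifies $\Bun_{G,N}$ with $\Bun_{H,M}$ via the isomorphism $c$. Combined with the fact that $m$ is finite flat of degree $d$---so that pulling back any $S$-relative effective Cartier divisor $D$ of $X\times S$ yields an $S$-relative effective Cartier divisor $m^{-1}(D)$ of $Y\times S$ of degree $d\cdot\deg D$---this equivalence should extend to the data parametrized by the two Hecke stacks.

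First I would unpack the $S$-points of the fiber product
\[
(\Bun_{G,N}\times(X\ssm N)^I)\times_{\Bun_{H,M}\times(\Div^d_{Y\ssm M})^I}\Hck^{(d)(I_1,\dotsc,I_k)}_{H,M,I}.
\]
Such an $S$-point consists of an $I$-tuple $(x_i)\in(X\ssm N)^I(S)$, a chain $(\cH_j,\chi_j)_{j=0}^k$ of objects of $\Bun_{H,M}(S)$ together with modifications $\phi_j$ over $Y\times S\ssm\sum_{i\in I_j}m^{-1}(\Ga_{x_i})$ satisfying the usual level compatibility, plus an object $(\cG_{j_0},\psi_{j_0})\in\Bun_{G,N}(S)$ equipped with an isomorphism $c(\cG_{j_0},\psi_{j_0})\cong(\cH_{j_0},\chi_{j_0})$. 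I need to show that this data corresponds canonically to an $S$-point of $\Hck^{(I_1,\dotsc,I_k)}_{G,N,I}$.

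The core step will be observing that the equivalence of \ref{ss:weilrestrictionbundles} is functorial under open restriction: for any open $R'\subseteq X\times S$ with preimage $m^{-1}(R')\subseteq Y\times S$, the equivalence between $G$-bundles on $R'$ and $H$-bundles on $m^{-1}(R')$ is obtained by restricting the equivalence over $X\times S$. This holds because the equivalence is constructed from Weil restriction and base change along the finite morphism $m$, both of which commute with open restriction on the base. Applied to $R'=(X\times S)\ssm\sum_{i\in I_j}\Ga_{x_i}$---whose preimage in $Y\times S$ is $(Y\times S)\ssm\sum_{i\in I_j}m^{-1}(\Ga_{x_i})$, using that $m$ is finite flat---this shows that the $\phi_j$ on the $H$-side correspond bijectively to modifications $\phi_j'$ of the $G$-bundles $\cG_j:=c^{-1}(\cH_j)$ over the complement of $\sum_{i\in I_j}\Ga_{x_i}$; compatibility with the level trivializations is built into $c$.

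Iteratively applying the inverse equivalence to each $\phi_j$, starting from $\cG_{j_0}$, will reconstruct the full chain and yield the desired unique $S$-point of $\Hck^{(I_1,\dotsc,I_k)}_{G,N,I}$; conversely, applying $c$ levelwise and pushing legs along $m^{-1}$ manifestly inverts this, so both directions of the Cartesian property follow. The main technical point will be verifying that the equivalence of \ref{ss:weilrestrictionbundles} is genuinely natural in $R$---which is essentially formal from its construction via Weil restriction along the finite morphism $m$ but merits a careful check---together with the identity $m^{-1}\bigl(\sum_i\Ga_{x_i}\bigr)=\sum_i m^{-1}(\Ga_{x_i})$ of $S$-relative effective Cartier divisors on $Y\times S$.
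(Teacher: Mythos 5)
Your proposal is correct and follows essentially the same route as the paper: both proofs unpack $S$-points and reduce everything to the equivalence of \ref{ss:weilrestrictionbundles} applied to $R = X\times S\ssm\sum_{i\in I_j}\Ga_{x_i}$, whose preimage under $m$ is $Y\times S\ssm\sum_{i\in I_j}m^{-1}(\Ga_{x_i})$. The only cosmetic difference is that you unpack $S$-points of the fiber product and reconstruct a point of $\Hck^{(I_1,\dotsc,I_k)}_{G,N,I}$, whereas the paper unpacks $S$-points of $\Hck^{(I_1,\dotsc,I_k)}_{G,N,I}$ and transforms the data to match the fiber product; also, no iteration is really needed---since $c$ is an equivalence, $\cG_j \coloneqq c^{-1}(\cH_j)$ recovers the whole chain at once, with the fiber-product datum identifying $\cG_{j_0}$.
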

\begin{proof}
An $S$-point of $\Hck^{(I_1,\dotsc,I_k)}_{G,N,I}$ consists of
\begin{enumerate}[i)]
\item for all $i$ in $I$, a point $x_i$ of $(X\ssm N)(S)$,
\item for all $0\leq j\leq k$, an object $(\cG_j,\psi_j)$ of $\Bun_{G,N}(S)$,
\item for all $1\leq j\leq k$, an isomorphism
  \begin{align*}
   \phi_j:\cG_{j-1}|_{X\times S\ssm\sum_{i\in I_j}\Ga_{x_i}}\ra^\sim\cG_j|_{X\times S\ssm\sum_{i\in I_j}\Ga_{x_i}}
  \end{align*}
 with $\psi_j\circ\phi_j|_{N\times S}=\psi_{j-1}$.
\end{enumerate}
By using the isomorphism $c$ and applying \ref{ss:weilrestrictionbundles} to $R=X\times S\ssm\sum_{i\in I_j}\Ga_{x_i}$, we see that ii) and iii) are equivalent to objects $(\cH_j,\psi'_j)$ of $\Bun_{H,M}(S)$ along with isomorphisms
\begin{align*}
\phi_j':\cH_{j-1}|_{Y\times S\ssm\sum_{i\in I_j}m^{-1}(\Ga_{x_i})}\ra^\sim\cH_j|_{Y\times S\ssm\sum_{i\in I_j}m^{-1}(\Ga_{x_i})}
\end{align*}
satisfying $\psi'_j\circ\phi'_j|_{M\times S}=\psi_{j-1}'$. Combined with i), this is precisely the data parametrized by the fiber product of $\Hck^{(d)(I_1,\dotsc,I_k)}_{H,M,I}$ and $\Bun_{G,N}\times(X\ssm N)^I$ over the product $\Bun_{H,M}\times(\Div^d_{Y\ssm M})^I$.
\end{proof}

\subsection{}\label{ss:plecticgr}
We obtain the Beilinson--Drinfeld affine Grassmannian version of the plectic diagram as follows. Take $N=\varnothing$ and $j_0=k$ in Proposition \ref{ss:plectichecke}. After pulling back along the $k$-point of $\Bun_G\ra^\sim\Bun_H$ corresponding to the trivial bundle, this yields a Cartesian square
\begin{align*}
\xymatrix{\Gr^{(I_1,\dotsc,I_k)}_{G,I}\ar[r]\ar[d]^-{\fp} & \Gr^{(d)(I_1,\dotsc,I_k)}_{H,I}\ar[d]^-{\fp}\\
X^I\ar[r]^-{(m^{-1})^I} & (\Div_{Y}^d)^I.
}  
\end{align*}

\subsection{}\label{ss:plecticgrbounds}
Next, we explain how the relative position stratification fits into the plectic diagram. Because $T=\R_{F/Q}A$, we see that $X_\bullet(T)$ is isomorphic as a $\Ga_Q$-module to 
\begin{align*}
\{\vp:\Ga_Q\ra X_\bullet(A)\mid \vp(xg)=g^{-1}\vp(x)\mbox{ for all }g\in\Ga_F\mbox{ and }x\in \Ga_Q\},
\end{align*}
whose $\Ga_Q$-action is given by inverse left multiplication. Under this identification, $X_\bullet(T)^{\Ga_Q}$ corresponds to the subset of functions $\vp$ taking constant values in $X_\bullet(A)^{\Ga_F}$. As $B=\prod_\io C$, we see that $X_\bullet^+(T)$ corresponds to the subset of functions $\vp$ that take values in $X_\bullet^+(A)$. After choosing representatives for $\Ga_Q/\Ga_F$ in $\Ga_Q$ and enumerating them, we may identify $X_\bullet(T)$ with $X_\bullet(A)^d$ and $X_\bullet^+(T)$ with $X_\bullet^+(A)^d$.

Let $\Om$ be a finite $\fS_d^I$-stable and $\Ga_F^{d\times I}$-stable subset of $X_\bullet^+(A)^{d\times I}$. In particular, $\Om$ is also $\Ga_Q^I$-stable when viewed as a subset of $X_\bullet^+(T)^I$. Hence we can form $\Gr^{(d)(I_1,\dotsc,I_k)}_{H,I,\Om}|_{(\Div^d_V)^I}$ and $\Gr^{(I_1,\dotsc,I_k)}_{G,I,\Om}|_{U^I}$ as in \ref{ss:symmetrizedbounds}. Because $(m^{-1})^I$ is a locally closed immersion, we see that the Cartesian square in \ref{ss:plecticgr} restricts to a Cartesian square
\begin{align*}
\xymatrix{\Gr^{(I_1,\dotsc,I_k)}_{G,I,\Om}|_{U^I}\ar[r]\ar[d]^-\fp & \Gr^{(d)(I_1,\dotsc,I_k)}_{H,I,\Om}|_{(\Div_V^d)^I}\ar[d]^-\fp\\
U^I\ar[r]^-{(m^{-1})^I} & (\Div_V^d)^I.
}
\end{align*}
\begin{rem*}
Note that stability under $\Ga^{d\times I}_F$ is a condition that is independent of our choice of representatives for $\Ga_Q/\Ga_F$ in $\Ga_Q$. Similarly, stability under $\fS_d^I$ is a condition that is independent of our enumeration of said representatives. 
\end{rem*}

\subsection{}\label{ss:plecticgrsheaves}
In the plectic setting, we work with the following relative position strata and corresponding sheaves. First, suppose $\Om$ equals $\prod_{i\in I}\Om_{i}$, where the $\Om_i$ are finite $\fS_d$-stable and $\Ga_F^d$-stable subsets of $X_\bullet^+(A)^d$. In particular, $\Om_{i}$ is a finite disjoint union of $\Ga_F^d$-orbits $O$. Then we can form $W_O$ as in \ref{ss:ICsheafreps}, and we write $W_{\Om,i,H}$ for the object $\bigoplus_OW_O$ of $\Rep_{\ov\bQ_\ell}((\prescript{L}{}{H})^d)$. Note that the $\fS_d$-stability of $\Om_{i}$ implies that $W_{\Om,i,H}\circ\sg^*=W_{\Om,i,H}$ for all $\sg$ in $\fS_d$. Finally, write $W_{\Om,H}$ for the exterior tensor product $\mathlarger{\mathlarger{\boxtimes}}_{i\in I}W_{\Om,i,H}$, and recall from \ref{ss:ICsheafreps} that $\sS^{(d\times I_1,\dotsc,d\times I_k)}_{d\times I,W_{\Om,H}}$ equals the intersection complex of $\Gr_{H,d\times I,\Om}^{(d\times I_1,\dotsc,d\times I_k)}|_{V^{d\times I}}$, with degree shifts normalized relative to $V^{d\times I}$.

By viewing $\Om_i$ as a subset of $X_\bullet^+(T)$ instead, we see that it is $\Ga_Q$-stable. Form $W_{\Om,i,G}$ and $W_{\Om,G}$ as above. We analogously see that $\sS^{(I_1,\dotsc,I_k)}_{I,W_{\Om,G}}$ equals the intersection complex of $\Gr^{(I_1,\dotsc,I_k)}_{G,I,\Om}|_{U^I}$, with degree shifts normalized relative to $U^I$.

More generally, let $\Om$ be any finite subset of $X_\bullet^+(A)^{d\times I}$ that is $\fS_d^I$-stable and $\Ga_F^{d\times I}$-stable. Then $\Om$ is a finite union of subsets of the form considered above, and we write $W_{\Om,H}$ and $W_{\Om,G}$ for the corresponding direct sum of algebraic representations over $\ov\bQ_\ell$. We see that the above relation with intersection complexes continues to hold.

\subsection{}\label{ss:plecticsht}
Finally, we arrive at the plectic diagram for moduli spaces of shtukas. By pulling back \ref{ss:plectichecke} along $\ga$, we get an analogous Cartesian square
\begin{align*}
\xymatrix{\Sht^{(I_1,\dotsc,I_k)}_{G,N,I}\ar[r]\ar[d]^-{\fp} & \Sht^{(d)(I_1,\dotsc,I_k)}_{H,M,I}\ar[d]^-{\fp}\\
(X\ssm N)^I\ar[r]^-{(m^{-1})^I} & (\Div_{Y\ssm M}^d)^I.
}
\end{align*}
Moreover, further restricting to \ref{ss:plecticgrbounds} via $\ga\circ\de$ yields a Cartesian square
\begin{align*}
\xymatrix{\Sht^{(I_1,\dotsc,I_k)}_{G,N,I,\Om}|_{(U\ssm N)^I}\ar[r]\ar[d]^-{\fp} & \Sht^{(d)(I_1,\dotsc,I_k)}_{H,M,I,\Om}|_{(\Div_{V\ssm M}^d)^I}\ar[d]^-{\fp}\\
(U\ssm N)^I\ar[r]^-{(m^{-1})^I} & (\Div_{V\ssm M}^d)^I.
} 
\end{align*}
If $I_1,\dotsc,I_k$ refines another ordered partition $I_1',\dotsc,I_{k'}'$, of $I$, note that
\begin{align*}
\xymatrix{
\Sht^{(I_1,\dotsc,I_k)}_{G,N,I}\ar[r]\ar[d]^-{\pi^{(I_1,\dotsc,I_k)}_{(I_1',\dotsc,I_{k'}')}} & \Sht^{(d)(I_1,\dotsc,I_k)}_{H,M,I}\ar[d]^-{\pi^{(I_1,\dotsc,I_k)}_{(I_1',\dotsc,I_{k'}')}}\\
\Sht^{(I_1',\dotsc,I_{k'}')}_{G,N,I}\ar[r] & \Sht^{(d)(I_1',\dotsc,I_{k'}')}_{H,M,I}
}
\end{align*}
yields a commutative square. Also, we have a commutative square
\begin{align*}
\xymatrix{
\Sht^{(I_1,\dotsc,I_k)}_{G,N,I}\ar[r]\ar[d]^-{\Fr_{I_1,N,I}^{(I_1,\dotsc,I_k)}} & \Sht^{(d)(I_1,\dotsc,I_k)}_{H,M,I}\ar[d]^-{\Fr_{I_1,M,I}^{(d)(I_1,\dotsc,I_k)}}\\
\Sht^{(I_2,\dotsc,I_k,I_1)}_{G,N,I}\ar[r] & \Sht^{(d)(I_2,\dotsc,I_k,I_1)}_{H,M,I}.
}
\end{align*}

\subsection{}\label{ss:ICpullbacks}
We now transfer \ref{ss:plecticgrsheaves} to moduli spaces of shtukas by using the morphism $\eps$ from \ref{ss:eps}. In the $d=1$ setting, the proof of \cite[Proposition 2.11]{Laf16}\footnote{Now \cite[Proposition 2.11]{Laf16} only treats the case of split $G$. However, it extends to the general case, which is already implicitly used in \cite[\S12]{Laf16}.} indicates that $\eps$ \'etale-locally induces an isomorphism from $\Sht_{G,N,I}^{(I_1,\dotsc,I_k)}|_{(U\ssm N)^I}$ to $\Gr^{(I_1,\dotsc,I_k)}_{G,I}|_{(U\ssm N)^I}$. Therefore $\sF^{(I_1,\dotsc,I_k)}_{N,I,W_{\Om,G}}=\eps^*(\sS^{(I_1,\dotsc,I_k)}_{I,W_{\Om,G}})$ equals the intersection complex of $\Sht_{G,N,I,\Om}^{(I_1,\dotsc,I_k)}|_{(U\ssm N)^I}$, with degree shifts normalized relative to $(U\ssm N)^I$. Applying this discussion to $H$ shows that $\sF^{(d\times I_1,\dotsc,d\times I_k)}_{M,d\times I,W_{\Om,H}}$ equals the intersection complex of $\Sht^{(d\times I_1,\dotsc,d\times I_k)}_{H,M,d\times I,\Om}|_{(V\ssm M)^{d\times I}}$, with degree shifts normalized relative to $(V\ssm M)^{d\times I}$.

Write $\sF^{(d)(I_1,\dotsc,I_k)}_{M,I,W_{\Om,H}}$ for the intersection complex of $\Sht^{(d)(I_1,\dotsc,I_k)}_{H,M,I,\Om}|_{(\Div^d_{V\ssm M})^I}$, with degree shifts normalized relative to $(\Div_{V\ssm M}^d)^I$. In the commutative squares from \ref{ss:plecticsht}, note that the horizontal arrows are locally closed immersions, up to universal homeomorphism. Hence we can identify the pullback of $\sF^{(d)(I_1,\dotsc,I_k)}_{M,I,W_{\Om,H}}$ to $\Sht^{(I_1,\dotsc,I_k)}_{G,N,I,\Om}|_{U^I}$ with $\sF^{(I_1,\dotsc,I_k)}_{N,I,W_{\Om,G}}$. And because the commutative squares from \ref{ss:shtukasymmetricaction} are finite generically \'etale, we can similarly identify the pullback of $\sF^{(d)(I_1,\dotsc,I_k)}_{M,I,W_{\Om,H}}$ to $\Sht^{(d\times I_1,\dotsc,d\times I_k)}_{H,M,d\times I,\Om}|_{(V\ssm M)^{d\times I}}$ with $\sF^{(d\times I_1,\dotsc,d\times I_k)}_{M,d\times I,W_{\Om,H}}$.

\subsection{}\label{ss:physicaldescent}
We leverage the link between symmetrized and unsymmetrized shtukas for $H$ to get the following relation on cohomology. Restricting the commutative square from \ref{ss:shtukasymmetricaction} to $(\Div_{V\ssm M}^{d,\circ})^I$ yields a Cartesian square
\begin{align*}
\xymatrix{\Sht^{(d)(I_1,\dotsc,I_k)}_{H,M,I,\Om}|_{(\Div_{V\ssm M}^{d,\circ})^I}\ar[d]^-\fp & \ar[l]_-\al\Sht^{(d\times I_1,\dotsc,d\times I_k)}_{H,M,d\times I,\Om}|_{((V\ssm M)^d_\circ)^I}\ar[d]^-\fp\\
(\Div^{d,\circ}_{V\ssm M})^I & \ar[l]_-\al((V\ssm M)_\circ^d)^I.
}
\end{align*}
Henceforth, assume that $Y$ is geometrically connected over $k$. Now proper base change and \ref{ss:ICpullbacks} show that
\begin{align*}
\al^*R\fp_!(\sF^{(d)(I_1,\dotsc,I_k)}_{M,I,W_{\Om,H}}|_{(\Div^{d,\circ}_{V\ssm M})^I}) = \sH_{M,d\times I,W_{\Om,H}}|_{((V\ssm M)^d_\circ)^I},
\end{align*}
so Theorem \ref{ss:AGKRRV}.a) and \ref{ss:symmetrizedescend} enable us to identify $R\fp_!(\sF^{(d)(I_1,\dotsc,I_k)}_{M,I,W_{\Om,H}}|_{(\Div^{d,\circ}_{V\ssm M})^I})_{\ov{k}}$ with the image of $\sT_{M,I,W_{\Om,H}}^{(d)}|_{(\Div^{d,\circ}_{V\ssm M})^I_{\ov{k}}}$ in $\Shv((\Div^{d,\circ}_{V\ssm M})^I_{\ov{k}})$. Repeating the arguments in \ref{ss:shtukapf} yields an isomorphism
\begin{align*}
F_{(d)I_1}:\Frob_{I_1}^*R\fp_!(\sF^{(d)(I_2,\dotsc,I_k,I_1)}_{M,I,W_{\Om,H}}|_{(\Div^{d,\circ}_{V\ssm M})^I})\ra^\sim R\fp_!(\sF^{(d)(I_1,\dotsc,I_k)}_{M,I,W_{\Om,H}}|_{(\Div^{d,\circ}_{V\ssm M})^I}),
\end{align*}
and we see from Theorem \ref{ss:AGKRRV}.a) and \ref{ss:partialfrobeniuscompatibility} that, under our identification, the equivariance data of $\sT_{M,I,W_{\Om,H}}^{(d)}|_{(\Div^{d,\circ}_{V\ssm M})^I_{\ov{k}}}$ corresponds to the pullbacks $\ov{F}_{(d)i}$ of the $F_{(d)i}$ to $(\Div^{d,\circ}_{V\ssm M})^I_{\ov{k}}$.

The comparison with $\sT^{(d)}_{M,I,W_{\Om,H}}$ shows that $R\fp_!(\sF^{(d)(I_2,\dotsc,I_k,I_1)}_{M,I,W_{\Om,H}}|_{(\Div^{d,\circ}_{V\ssm M})^I})$ is independent up to isomorphism of the ordered partition $I_1,\dotsc,I_k$, so we denote this ind-(constructible complex of $\ov\bQ_\ell$-sheaves) on $(\Div^{d,\circ}_{V\ssm M})^I$ by $\sH_{M,I,W_{\Om,H}}^{(d)}$. As its pullback under the finite \'etale $\al$ has ind-smooth cohomology sheaves by Theorem \ref{ss:cohomologyissmooth}, we see that $\sH_{M,I,W_{\Om,H}}^{(d)}$ does as well.

\subsection{}\label{ss:maintheorem}
From here, we obtain the following generalization of Theorem A.
\begin{thm*}
The complex of intersection cohomology with compact support of $\Sht_{G,N,I,\Om}|_{Q_I}$ with coefficients in $\ov\bQ_\ell$ canonically lifts from an object of $D^b(\Weil(Q)^I,\ov\bQ_\ell)$ to an object of $D^b((\Weil(F)^d\rtimes\fS_d)^I,\ov\bQ_\ell)$.
\end{thm*}
\begin{proof}
Choose a geometric generic point $\ov{\eta_I}$ of $X^I$. Now our object of $D^b(\Weil(Q)^I,\ov\bQ_\ell)$ is the image of $\sT_{N,I,W_{\Om,G}}$ under the functor from \ref{ss:fiberfunctors}, where we use the finite-dimensionality of $\Sht_{G,N,I,\Om}|_{(U\ssm N)^I}$ and Theorem \ref{ss:AGKRRV}.a) to see that the image lies in $D^b(\Weil(Q)^I,\ov\bQ_\ell)$, and we use \ref{ss:ICpullbacks} and Theorem \ref{ss:cohomologyissmooth} to identify its underlying complex of $\ov\bQ_\ell$-vector spaces with $\sH_{N,I,W_{\Om,G},\ov{\eta_I}}$.

Choose a geometric generic point $\ov{\be_{d\times I}}$ of $Y^{d\times I}$. By projecting to $(\Div^d_Y)^I$, we view $\ov{\be_{d\times I}}$ as a geometric generic point of $(\Div^d_Y)^I$. Proper base change and \ref{ss:ICpullbacks} show that
  \begin{align*}
    (m^{-1})^{I,*}\sH^{(d)}_{M,I,W_{\Om,H}} = \sH_{N,I,W_{\Om,G}},
  \end{align*}
  so after choosing an \'etale path $\ov{\be_{d\times I}}\rightsquigarrow (m^{-1})^I(\ov{\eta_I})$, we obtain an isomorphism $\sH_{N,I,W_{\Om,G},\ov{\eta_I}}\ra^\sim\sH^{(d)}_{M,I,W_{\Om,H},\ov{\be_{d\times I}}}$ by \ref{ss:physicaldescent}.

  But the pullback of $\sH^{(d)}_{M,I,W_{\Om,H}}$ under $\al$ equals $\sH_{M,d\times I,W_{\Om,H}}|_{((V\ssm M)^d_\circ)^I}$, which identifies $\sH^{(d)}_{M,I,W_{\Om,H},\ov{\be_{d\times I}}}$ with $\sH_{M,d\times I,W_{\Om,H},\ov{\be_{d\times I}}}$. Theorem \ref{ss:cohomologyissmooth} allows us to view the latter as the image in $D^b((\Weil(F)^d\rtimes\fS_d)^I,\ov\bQ_\ell)$ of $\sT_{M,d\times I,W_{\Om,H}}$ under the functor from \ref{ss:fiberfunctors}. Finally, the commutative diagram
  \begin{align*}
    \xymatrix{((\Shv((V\ssm M)_{\ov{k}})^{B\bZ})^{\otimes(d\times I)})^{B\fS_d^I}\ar[r]\ar[d] & D((\Weil(F)^d\ltimes\fS_d)^I,\ov\bQ_\ell)\ar[d] \\ 
    (\Shv((U\ssm N)_{\ov{k}})^{B\bZ})^{\otimes I} \ar[r] & D(\Weil(Q)^I,\ov\bQ_\ell)}
  \end{align*}
shows that restricting along the $I$-fold product of $\Weil(Q)\hookrightarrow\Weil(F)^d\rtimes\fS_d$ recovers our original object of $D^b(\Weil(Q)^I,\ov\bQ_\ell)$, as desired.
\end{proof}

\subsection{}\label{ss:plecticalgebra}
We prove the following generalization of Theorem B by using the Hecke compatibility of our constructions.
\begin{thm*}
The action of $(\Weil(F)^d\rtimes\fS_d)^I$ from Theorem \ref{ss:maintheorem} on the level of cohomology groups commutes with the action of $\fH_{G,N}$ from \ref{ss:heckecorrespondence}.
\end{thm*}
\begin{proof}
Under the identification $G(\bA_Q)=H(\bA_F)$, we see that the compact open subgroup $K_{G,N}$ corresponds to $K_{H,M}$. This identifies $\fH_{G,N}$ with $\fH_{H,M}$. For any $g$ in $G(\bA_Q)$, write $N(g)$ for the finite set of closed points of $X$ where $g$ does not lie in $G(\cO_x)$, and write $M(g)$ for $m^{-1}(N(g))$. Remark \ref{rem:extension} gives an associated finite \'etale correspondence on $\Sht^{(I_1,\dotsc,I_k)}_{G,N,I,\Om}|_{(U\ssm(N\cup N(g)))^I}$ over $(U\ssm(N\cup N(g)))^I$. By viewing $g$ as an element of $H(\bA_F)$ instead, we obtain analogous correspondences on
\begin{align*}
\Sht^{(d)(I_1,\dotsc,I_k)}_{H,M,I,\Om}|_{(\Div^d_{V\ssm(M\cup M(g))})^I}\mbox{ and }\Sht^{(d\times I_1,\dotsc,d\times I_k)}_{H,M,d\times I,\Om}|_{(V\ssm(M\cup M(g)))^{d\times I}}.
\end{align*}
 Because these correspondences are finite \'etale, their pullbacks preserve intersection complexes, giving us a cohomological correspondence on $\sF^{(d)(I_1,\dotsc,I_k)}_{M,I,W_{\Om,H}}$.

Restricting \ref{ss:plecticsht} to $(\Div^d_{V\ssm(M\cup M(g))})^I$ yields a morphism
\begin{align*}
\Sht^{(I_1,\dotsc,I_k)}_{G,N,I,\Om}|_{(U\ssm(N\cup N(g)))^I}\ra\Sht^{(d)(I_1,\dotsc,I_k)}_{H,M,I,\Om}|_{(\Div^d_{V\ssm(M\cup M(g))})^I}.
\end{align*}
Note that our correspondence on $\Sht^{(I_1,\dotsc,I_k)}_{G,N,I,\Om}|_{(U\ssm(N\cup N(g)))^I}$ is precisely the pullback of our correspondence on $\Sht^{(d)(I_1,\dotsc,I_k)}_{H,M,I,\Om}|_{(\Div^d_{V\ssm(M\cup M(g))})^I}$ along the above morphism. Similarly, we see that our correspondence on
\begin{align*}
\Sht^{(d\times I_1,\dotsc,d\times I_k)}_{H,M,d\times I,\Om}|_{(V\ssm(M\cup M(g)))^{d\times I}}
\end{align*}
is the pullback of our correspondence on $\Sht^{(d)(I_1,\dotsc,I_k)}_{H,M,I,\Om}|_{(\Div^d_{V\ssm(M\cup M(g))})^I}$ along $\al$, up to universal homeomorphism.

The proof of Theorem \ref{ss:maintheorem} constructs the $(\Weil(F)^d\rtimes\fS_d)^I$-action on $\sH^p_{N,I,W_{\Om,G}}|_{\ov{\eta_I}}$ by identifying the latter with $\sH^p_{M,d\times I,W_{\Om,H}}|_{\ov{\be_{d\times I}}}$. The above shows that, under this identification, the $\fH_{G,N}$-action on $\sH^p_{N,I,W_{\Om,G}}|_{\ov{\eta_I}}$ coincides with the $\fH_{G,N}=\fH_{H,M}$-action on $\sH^p_{M,d\times I,W_{\Om,H}}|_{\ov{\be_{d\times I}}}$. From here, the desired commutativity follows from \ref{ss:physicaldescent}, \ref{ss:partialfrobeniuscompatibility}, and \ref{ss:adeliccompatible}.
\end{proof}

\subsection{}\label{ss:totallysplitfiber}
Before turning to Theorem C, we need some notation on the splitting behavior of points along $Y\ra X$. Let $k'$ be a finite extension of $k$ with degree $r$, and let $\ul{x}=(x_i)_{i\in I}$ be a point of $(U\ssm N)^I(k')$ such that every $x_i$ \emph{splits completely} in $V\ssm M$, i.e. the inverse image $m^{-1}(x_i)$ is a disjoint union of $d$ points $(y_{h,i})_{h=1}^d$ of $(V\ssm M)(k')$.

Because $(m^{-1})^I$ is a monomorphism, we get a Cartesian square
\begin{align*}
\xymatrix{\ul{x}\ar[r]^-{(m^{-1})^I}\ar[d] & (m^{-1})^I(\ul{x})\ar[d]\\
(U\ssm N)^I\ar[r]^-{(m^{-1})^I} & (\Div_{V\ssm M}^d)^I
}
\end{align*}
whose top arrow is an isomorphism. Therefore restricting \ref{ss:plecticsht} to $(m^{-1})^I(\ul{x})$ yields an isomorphism $\Sht^{(I_1,\dotsc,I_k)}_{G,N,I}|_{\ul{x}}\ra^\sim\Sht^{(d)(I_1,\dotsc,I_k)}_{H,M,I}|_{(m^{-1})^I(\ul{x})}$, and further restriction yields an isomorphism $\Sht^{(I_1,\dotsc,I_k)}_{G,N,I,\Om}|_{\ul{x}}\ra^\sim\Sht^{(d)(I_1,\dotsc,I_k)}_{H,M,I,\Om}|_{(m^{-1})^I(\ul{x})}$.

Because every $x_i$ splits completely in $V\ssm M$, we see that the preimage of $(m^{-1})^I(\ul{x})$ under $\al$ equals
\begin{align*}
\{(y_{\sg(h,i)})_{j\in[d],i\in I}\in(V\ssm M)^{d\times I}(k')\mid\sg\in\fS_d^I\},
\end{align*}
where each $(y_{\sg(h,i)})_{h\in[d],i\in I}$ maps isomorphically to $(m^{-1})^I(\ul{x})$ under $\al$. Write $\ul{y}$ for the point $(y_{h,i})_{h\in[d],i\in I}$ of $(V\ssm M)^{d\times I}(k')$ induced by our enumeration of the $m^{-1}(x_i)$. Then pulling back \ref{ss:shtukasymmetricaction} along $(m^{-1})^I(\ul{x})\lea^\sim\ul{y}$ yields an isomorphism
\begin{align*}
\Sht^{(d)(I_1,\dotsc,I_k)}_{H,M,I}|_{(m^{-1})^I(\ul{x})}\lea^\sim\Sht^{(d\times I_1,\dotsc,d\times I_k)}_{H,M,d\times I}|_{\ul{y}},
\end{align*}
and further restriction yields a universal homeomorphism
\begin{align*}
\Sht^{(d)(I_1,\dotsc,I_k)}_{H,M,I,\Om}|_{(m^{-1})^I(\ul{x})}\lea\Sht^{(d\times I_1,\dotsc,d\times I_k)}_{H,M,d\times I,\Om}|_{\ul{y}}.
\end{align*}

\subsection{}\label{ss:specialfibers}
From \ref{ss:totallysplitfiber}, we obtain the following implications for cohomology. Our zig-zag of universal homeomorphisms
\begin{align*}
\Sht^{(I_1,\dotsc,I_k)}_{G,N,I,\Om}|_{\ul{x}}\ra^\sim\Sht^{(d)(I_1,\dotsc,I_k)}_{H,M,I,\Om}|_{(m^{-1})^I(\ul{x})}\lea\Sht^{(d\times I_1,\dotsc,d\times I_k)}_{H,M,d\times I,\Om}|_{\ul{y}}
\end{align*}
allows us to identify the \'etale cohomology of the left and right terms.

Choose a geometric point $\ov{\ul{x}}$ lying over $\ul{x}$. Now $\sF^{(I_1,\dotsc,I_k)}_{N,I,W_{\Om,G}}$ restricts to the intersection complex of $\Sht^{(I_1,\dotsc,I_k)}_{G,N,I,\Om}|_{\ul{x}}$, so proper base change indicates that $\sH^p_{N,I,W_{\Om,G}}|_{\ov{\ul{x}}}$ is naturally isomorphic to the $p$-th intersection cohomology group with compact support of $\Sht^{(I_1,\dotsc,I_k)}_{G,N,I,\Om}|_{\ov{\ul{x}}}$ with coefficients in $\ov\bQ_\ell$. Analogous statements hold for $\Sht^{(d)(I_1,\dotsc,I_k)}_{H,M,I,\Om}|_{(m^{-1})^I(\ul{x})}$ and $\Sht^{(d\times I_1,\dotsc,d\times I_k)}_{H,M,d\times I,\Om}|_{\ul{y}}$, in a manner compatible with the identifications induced by the above zig-zag.

\subsection{}\label{ss:plecticfrob}
We now prove the following generalization of Theorem C. After choosing an \'etale path $\ov{\eta_I}\rightsquigarrow\ov{\ul{x}}$, Theorem \ref{ss:cohomologyissmooth} yields a specialization isomorphism
\begin{align*}
\sH^p_{N,I,W_{\Om,G}}|_{\ov{\ul{x}}}\ra^\sim\sH^p_{N,I,W_{\Om,G}}|_{\ov{\eta_I}}
\end{align*}
Applying $(m^{-1})^I$ to $\ov{\eta_I}\rightsquigarrow\ov{\ul{x}}$ induces an \'etale path $(m^{-1})^I(\ov{\eta_I})\rightsquigarrow (m^{-1})^I(\ov{\ul{x}})$, and composing this with our \'etale path $\ov{\be_{d\times I}}\rightsquigarrow (m^{-1})^I(\ov{\eta_I})$ yields an \'etale path $\ov{\be_{d\times I}}\rightsquigarrow (m^{-1})^I(\ov{\ul{x}})$. The isomorphism $(m^{-1})^I(\ul{x})\lea^\sim\ul{y}$ enables us to view $\ov{\ul{x}}$ as a geometric point $\ov{\ul{y}}$ lying over $\ul{y}$, so this amounts to an \'etale path $\ov{\be_{d\times I}}\rightsquigarrow\al(\ov{\ul{y}})$. Because $\al$ is \'etale at $\be_{d\times I}$ and $\ul{y}$, this corresponds to an \'etale path $\ov{\be_{d\times I}}\rightsquigarrow\ov{\ul{y}}$. Applying Theorem \ref{ss:cohomologyissmooth} again yields another specialization isomorphism
\begin{align*}
\sH^p_{M,d\times I,W_{\Om,H}}|_{\ov{\ul{y}}}\ra^\sim\sH^p_{M,d\times I,W_{\Om,H}}|_{\ov{\be_{d\times I}}}.
\end{align*}

Let $(h,i)$ be in $d\times I$. From our \'etale path $\ov{\be_{d\times I}}\rightsquigarrow\ov{\ul{y}}$, we obtain an element $\ga_{y_{h,i}}$ of $\Weil(V\ssm M)$ as in \ref{ss:partialfrobeniusgaloisfrobenius}. Since $\Frob_{(h,i)}^r$ fixes $\ul{y}$, we see that
\begin{align*}
F_{(h,i)}\circ\Frob_{(h,i)}^*(F_{(h,i)})\circ\dotsb\circ\Frob_{(h,i)}^{r-1,*}(F_{(h,i)})
\end{align*}
restricts to an automorphism of $\sH^p_{M,d\times I,W_{\Om,H}}|_{\ov{\ul{y}}}$.
\begin{thm*}
Under the identifications
\begin{align*}
\sH^p_{N,I,W_{\Om,G}}|_{\ov{\eta_I}}\ra^\sim\sH^p_{M,d\times I,W_{\Om,H}}|_{\ov{\be_{d\times I}}}\lea^\sim\sH^p_{M,d\times I,W_{\Om,H}}|_{\ov{\ul{y}}},
\end{align*}
 this automorphism corresponds to the action of $\ga_{y_{h,i}}$ in the $(h,i)$-th entry of $\Weil(V\ssm M)^{d\times I}$.
\end{thm*}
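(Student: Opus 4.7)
The plan is to reduce directly to Proposition \ref{ss:galfrob} applied to $H$, using the fact that all of Theorem \ref{ss:maintheorem}'s identifications are built to carry the $(h,i)$-th entry of $\Weil(V\ssm M)^{d\times I}$ to itself. Since $Y$ is geometrically connected over $k$ (the standing assumption in \ref{ss:rightpfcompatibility}), Proposition \ref{ss:galfrob} is available with $X$ replaced by $Y$, $G$ by $H$, $N$ by $M$, $I$ by $d\times I$, $W$ by $W_{\Om,H}$, and the chosen $k'$-point $\ul{x}$ of $(U\ssm N)^I$ replaced by the $k'$-point $\ul{y}$ of $(V\ssm M)^{d\times I}$. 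Applied to the index $(h,i)$ and the \'etale path $\ov{\be_{d\times I}}\rightsquigarrow\ov{\ul{y}}$, it gives exactly the statement that the specialization isomorphism carries the partial Frobenius composition $F_{\{(h,i)\}}\circ\Frob^*_{\{(h,i)\}}(F_{\{(h,i)\}})\circ\dotsb\circ\Frob^{r-1,*}_{\{(h,i)\}}(F_{\{(h,i)\}})$ on $\sH^p_{M,d\times I,W_{\Om,H},\Xi}|_{\ov{\ul{y}}}$ to the action of $\ga_{y_{h,i}}$ in the $(h,i)$-th entry of $\Weil(V\ssm M)^{d\times I}$ on $\sH^p_{M,d\times I,W_{\Om,H},\Xi}|_{\ov{\be_{d\times I}}}$.

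Next, I need to check that the identification $\sH^p_{N,I,W_{\Om,G},\Xi}|_{\ov{\eta_I}}\ra^\sim\sH^p_{M,d\times I,W_{\Om,H},\Xi}|_{\ov{\be_{d\times I}}}$ used in the proof of Theorem \ref{ss:maintheorem} intertwines the $(h,i)$-th entry of the $\Weil(V\ssm M)^{d\times I}$-action on each side. But this is essentially a definition: the $\Weil(V\ssm M)^{d\times I}$-factor of the $(\fS_d\ltimes\Weil(V\ssm M)^d)^I$-action on the left-hand side is constructed by transporting the Drinfeld-lemma action of Theorem \ref{ss:drinfeldslemma} on the right-hand side through the proper-base-change identification $\sH^p_{N,I,W_{\Om,G},\Xi} \cong ((m^{-1})^I)^*\sH^{(d),p}_{M,I,W_{\Om,H},\Xi}$ from \ref{ss:plecticsht} and the $\al$-pullback identification $\sH^{(d),p}_{M,I,W_{\Om,H},\Xi}|_{\al(\ov{\be_{d\times I}})} \cong \sH^p_{M,d\times I,W_{\Om,H},\Xi}|_{\ov{\be_{d\times I}}}$ from \ref{ss:rightpfcompatibility}. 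Hence the $(h,i)$-th factor on the left is literally defined to be the $(h,i)$-th factor on the right.

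The remaining bookkeeping is to verify that the specialization isomorphisms on the two sides are compatible. By construction in the statement, the \'etale path $\ov{\be_{d\times I}}\rightsquigarrow\ov{\ul{y}}$ is obtained via $\al$ from the path $\al(\ov{\be_{d\times I}})\rightsquigarrow(m^{-1})^I(\ov{\ul{x}})$, itself the concatenation of the fixed path $\al(\ov{\be_{d\times I}})\rightsquigarrow(m^{-1})^I(\ov{\eta_I})$ used in the proof of Theorem \ref{ss:maintheorem} with the $(m^{-1})^I$-image of $\ov{\eta_I}\rightsquigarrow\ov{\ul{x}}$. Since the Cartesian square in \ref{ss:totallysplitfiber} and the finite universal-homeomorphism square of \ref{ss:shtukasymmetricaction} commute with passing to stalks, proper base change produces a commuting diagram of specialization maps relating $\sH^p_{N,I,W_{\Om,G},\Xi}|_{\ov{\ul{x}}}$, $\sH^{(d),p}_{M,I,W_{\Om,H},\Xi}|_{\al(\ov{\ul{y}})}$, $\sH^p_{M,d\times I,W_{\Om,H},\Xi}|_{\ov{\ul{y}}}$ and their counterparts at $\ov{\eta_I}$, $\al(\ov{\be_{d\times I}})$, $\ov{\be_{d\times I}}$. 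Composing these commuting squares with the conclusion of Proposition \ref{ss:galfrob} for $H$ yields the theorem.

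The main obstacle is purely organizational: it is the careful matching of \'etale paths, basepoints, and $\al$-pullbacks so that the Drinfeld-lemma action transported across the plectic diagram really agrees, in its $(h,i)$-th entry, with what one gets by applying Proposition \ref{ss:galfrob} directly to the $H$-side. No new sheaf-theoretic input beyond Theorem \ref{ss:cohomologyissmooth}, Theorem \ref{ss:drinfeldslemma}, and Proposition \ref{ss:galfrob} is needed.
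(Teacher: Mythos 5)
Your proposal is correct and follows the same route as the paper: the $\Weil(V\ssm M)^{d\times I}$-action in Theorem \ref{ss:maintheorem} is by construction transported from the $H$-side, so the claim reduces to the corresponding statement at $\ov{\be_{d\times I}}$, which is Proposition \ref{ss:galfrob} applied to $H$, $Y$, $M$, $d\times I$, $W_{\Om,H}$, $\ul{y}$. The extra bookkeeping you carry out on the compatibility of \'etale paths and specialization maps is exactly what the preamble of \ref{ss:plecticfrob} pre-arranges (the chosen paths $\ov{\eta_I}\rightsquigarrow\ov{\ul{x}}$, $\al(\ov{\be_{d\times I}})\rightsquigarrow(m^{-1})^I(\ov{\eta_I})$, and their concatenation/lift via $\al$), so the paper's proof can afford to be terser.
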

\begin{proof}
The proof of Theorem \ref{ss:maintheorem} constructs the action of $\Weil(V\ssm M)^{d\times I}$ on $\sH^p_{N,I,W_{\Om,G}}|_{\ov{\eta_I}}$ via the isomorphism $\sH^p_{N,I,W_{\Om,G}}|_{\ov{\eta_I}}\ra^\sim\sH^p_{M,d\times I,W_{\Om,H}}|_{\ov{\be_{d\times I}}}$. So it suffices to see that this corresponds to the action of $\ga_{y_{h,i}}$ in the $(h,i)$-th entry of $\Weil(V\ssm M)^{d\times I}$ on $\sH^p_{M,d\times I,W_{\Om,H}}|_{\ov{\be_{d\times I}}}$, which follows immediately from Proposition \ref{ss:galfrob}.
\end{proof}

\subsection{}
When $r=1$, the action from Theorem \ref{ss:plecticfrob} has the following description in terms of the special fiber of the moduli space of shtukas. First, recall from \ref{ss:eps} that we can use any ordered partition of $d\times I$ to compute $\sH^p_{M,d\times I,W_{\Om,H}}$. Let $P$ be any ordered partition of $d\times I\ssm (h,i)$. Then $F_{(h,i)}$ is induced by the morphism $\Fr^{((h,i),P)}_{(h,i),M,d\times I}$, so we see from \ref{ss:specialfibers} that its action on $\sH^p_{M,d\times I,W_{\Om,H}}$ is induced by the restriction
\begin{align*}
\Fr^{((h,i),P)}_{(h,i),M,d\times I}:\Sht^{((h,i),P)}_{H,M,d\times I,\Om}|_{\ul{y}}\ra\Sht^{(P,(h,i))}_{H,M,d\times I,\Om}|_{\ul{y}}.
\end{align*}

\subsection{}
The case of general $r$ is more complicated for the following reason. We use Theorem \ref{ss:geometricsatake}.b) to identify $R\fp_!(\sF^{((h,i),P)}_{M,d\times I,W_{\Om,H}})$ with $R\fp_!(\sF^{(P,(h,i))}_{M,d\times I,W_{\Om,H}})$, which is what enables us to iteratively compose $F_{(h,i)}$. This identification does not seem to arise from an explicit cohomological correspondence, so for general $r$ and $\ul{x}$ this impedes us from similarly describing the action of
\begin{align*}
F_{(h,i)}\circ\Frob_{(h,i)}^*(F_{(h,i)})\circ\dotsb\circ\Frob_{(h,i)}^{r-1,*}(F_{(h,i)})
\end{align*}
on $\sH^p_{M,d\times I,W_{\Om,H}}|_{\ov{\ul{y}}}$. However, when all the $x_i$ are disjoint, the $y_{h,i}$ will also be disjoint, so we can use \ref{ss:grfactorization} to explicitly identify $\Sht^{((h,i),P)}_{H,M,d\times I,\Om}|_{\ul{y}}$ with $\Sht^{(P,(h,i))}_{H,M,d\times I,\Om}|_{\ul{y}}$ via pulling back along $\de\circ\ga$. Therefore we obtain a description of the action from Theorem \ref{ss:plecticfrob} in terms of the special fiber of the moduli space of shtukas in this case.

\bibliographystyle{habbrv}
\bibliography{biblio}

\def\cprime{$'$}
\begin{thebibliography}{10}

\bibitem{AH13}
E.~Arasteh~Rad and U.~Hartl.
\newblock {Uniformizing the Moduli Stacks of Global G-Shtukas}.
\newblock {\em International Mathematics Research Notices}, 11 2019.

\bibitem{AGKRRV20b}
D.~{Arinkin}, D.~{Gaitsgory}, D.~{Kazhdan}, S.~{Raskin}, N.~{Rozenblyum}, and
  Y.~{Varshavsky}.
\newblock {Duality for automorphic sheaves with nilpotent singular support}.
\newblock {\em arXiv e-prints}, page arXiv:2012.07665, Dec. 2020, 2012.07665.

\bibitem{AGKRRV20a}
D.~{Arinkin}, D.~{Gaitsgory}, D.~{Kazhdan}, S.~{Raskin}, N.~{Rozenblyum}, and
  Y.~{Varshavsky}.
\newblock {The stack of local systems with restricted variation and geometric
  Langlands theory with nilpotent singular support}.
\newblock {\em arXiv e-prints}, page arXiv:2010.01906, Oct. 2020, 2010.01906.

\bibitem{AGKRRV21}
D.~{Arinkin}, D.~{Gaitsgory}, D.~{Kazhdan}, S.~{Raskin}, N.~{Rozenblyum}, and
  Y.~{Varshavsky}.
\newblock {Automorphic functions as the trace of Frobenius}.
\newblock {\em arXiv e-prints}, page arXiv:2102.07906, Feb. 2021, 2102.07906.

\bibitem{AGV73}
M.~Artin, A.~Grothendieck, and J.-L. Verdier.
\newblock {\em Th\'eorie des topos et cohomologie \'etale des sch\'emas. {T}ome
  3}.
\newblock Lecture Notes in Mathematics, Vol. 305. Springer-Verlag, Berlin-New
  York, 1973.
\newblock S\'eminaire de G\'eom\'etrie Alg\'ebrique du Bois-Marie 1963--1964
  (SGA 4), Dirig\'e par M. Artin, A. Grothendieck et J. L. Verdier. Avec la
  collaboration de P. Deligne et B. Saint-Donat.

\bibitem{BD99}
A.~Beilinson and V.~Drinfeld.
\newblock {Quantization of Hitchin's integrable system and Hecke eigensheaves}.
\newblock
  \url{http://math.uchicago.edu/~drinfeld/langlands/QuantizationHitchin.pdf},
  1999.

\bibitem{Bre19}
P.~{Breutmann}.
\newblock {Functoriality of Moduli Spaces of Global $\mathbb G$-Shtukas}.
\newblock {\em arXiv e-prints}, page arXiv:1902.10602, Feb. 2019, 1902.10602.

\bibitem{Bro13}
M.~Broshi.
\newblock {$G$}-torsors over a {D}edekind scheme.
\newblock {\em J. Pure Appl. Algebra}, 217(1):11--19, 2013.

\bibitem{BT84}
F.~Bruhat and J.~Tits.
\newblock Groupes r\'{e}ductifs sur un corps local. {II}. {S}ch\'{e}mas en
  groupes. {E}xistence d'une donn\'{e}e radicielle valu\'{e}e.
\newblock {\em Inst. Hautes \'{E}tudes Sci. Publ. Math.}, (60):197--376, 1984.

\bibitem{Con14}
B.~Conrad.
\newblock Reductive group schemes.
\newblock In {\em Autour des sch\'{e}mas en groupes. {V}ol. {I}}, volume 42/43
  of {\em Panor. Synth\`eses}, pages 93--444. Soc. Math. France, Paris, 2014.

\bibitem{DG70}
M.~Demazure and A.~Grothendieck.
\newblock {\em Sch\'{e}mas en groupes. {I}: {P}ropri\'{e}t\'{e}s
  g\'{e}n\'{e}rales des sch\'{e}mas en groupes}.
\newblock S\'{e}minaire de G\'{e}om\'{e}trie Alg\'{e}brique du Bois-Marie
  1962/64 (SGA 3). Dirig\'{e} par M. Demazure et A. Grothendieck. Lecture Notes
  in Mathematics, Vol. 151. Springer-Verlag, Berlin-New York, 1970.

\bibitem{FS21}
L.~{Fargues} and P.~{Scholze}.
\newblock {Geometrization of the local Langlands correspondence}.
\newblock {\em arXiv e-prints}, page arXiv:2102.13459, Feb. 2021, 2102.13459.

\bibitem{Gai01}
D.~Gaitsgory.
\newblock Construction of central elements in the affine {H}ecke algebra via
  nearby cycles.
\newblock {\em Invent. Math.}, 144(2):253--280, 2001.

\bibitem{GKRV19}
D.~{Gaitsgory}, D.~{Kazhdan}, N.~{Rozenblyum}, and Y.~{Varshavsky}.
\newblock {A toy model for the Drinfeld-Lafforgue shtuka construction}.
\newblock {\em arXiv e-prints}, page arXiv:1908.05420, Aug. 2019, 1908.05420.

\bibitem{GR17a}
D.~Gaitsgory and N.~Rozenblyum.
\newblock {\em A study in derived algebraic geometry. {V}ol. {I}.
  {C}orrespondences and duality}, volume 221 of {\em Mathematical Surveys and
  Monographs}.
\newblock American Mathematical Society, Providence, RI, 2017.

\bibitem{Hei10}
J.~Heinloth.
\newblock Uniformization of {$\mathscr G$}-bundles.
\newblock {\em Math. Ann.}, 347(3):499--528, 2010.

\bibitem{HRS20}
T.~{Hemo}, T.~{Richarz}, and J.~{Scholbach}.
\newblock {Constructible sheaves on schemes and a categorical K{\"u}nneth
  formula}.
\newblock {\em arXiv e-prints}, page arXiv:2012.02853, Dec. 2020, 2012.02853.

\bibitem{Kal19}
T.~{Kaletha}.
\newblock {Supercuspidal L-packets}.
\newblock {\em arXiv e-prints}, page arXiv:1912.03274, Dec. 2019, 1912.03274.

\bibitem{Laf16}
V.~Lafforgue.
\newblock Chtoucas pour les groupes r\'{e}ductifs et param\'{e}trisation de
  {L}anglands globale.
\newblock {\em J. Amer. Math. Soc.}, 31(3):719--891, 2018.

\bibitem{Lu17}
J.~Lurie.
\newblock {Higher Algebra}.
\newblock \url{https://www.math.ias.edu/~lurie/papers/HA.pdf}, 2017.

\bibitem{NS16}
J.~Nekov\'{a}\v{r} and A.~J. Scholl.
\newblock Introduction to plectic cohomology.
\newblock In {\em Advances in the theory of automorphic forms and their
  {$L$}-functions}, volume 664 of {\em Contemp. Math.}, pages 321--337. Amer.
  Math. Soc., Providence, RI, 2016.

\bibitem{Ric14}
T.~Richarz.
\newblock A new approach to the geometric {S}atake equivalence.
\newblock {\em Doc. Math.}, 19:209--246, 2014.

\bibitem{Var04}
Y.~Varshavsky.
\newblock Moduli spaces of principal {$F$}-bundles.
\newblock {\em Selecta Math. (N.S.)}, 10(1):131--166, 2004.

\bibitem{Xue20b}
C.~{Xue}.
\newblock {Smoothness of cohomology sheaves of stacks of shtukas}.
\newblock {\em arXiv e-prints}, Dec. 2020, 2012.12833.

\bibitem{YZ17}
Z.~Yun and W.~Zhang.
\newblock Shtukas and the {T}aylor expansion of {$L$}-functions.
\newblock {\em Ann. of Math. (2)}, 186(3):767--911, 2017.

\bibitem{Zhu17}
X.~Zhu.
\newblock An introduction to affine {G}rassmannians and the geometric {S}atake
  equivalence.
\newblock In {\em Geometry of moduli spaces and representation theory},
  volume~24 of {\em IAS/Park City Math. Ser.}, pages 59--154. Amer. Math. Soc.,
  Providence, RI, 2017.

\end{thebibliography}
\end{document}